\newcommand\bdelta{{\boldsymbol \delta}}
\newcommand\ra{{\rm a}}
\newcommand\rb{{\rm b}}
\newcommand\re{{\rm e}}
\newcommand{\bb}{{\mathbf{b}}}
\newcommand{\db}{{\mathbf{d}}}
\newcommand{\ab}{\mathbf{a}}
\newcommand\bd{\mathbf{d}}
\newcommand\bc{\mathbf{c}}
\newcommand{\cb}{{\boldsymbol{c}}}
\def\bdelta{\mbox{\boldmath $\delta$}}
\newcommand\cA{{\mathcal A}}
\newcommand\cT{{\mathcal T}}
\newcommand\cM{{\mathcal M}}
\newcommand{\fpak}{\cA}
\newcommand{\ZZ}{\mathbb{Z}}
\newcommand{\RR}{\mathbb{R}}
\newcommand{\NN}{\mathbb{N}}
\newcommand{\CC}{\mathbb{C}}
\newcommand{\n}{\noindent}
\newcommand{\eps}{\varepsilon}
\newcommand\ie{{\it\thinspace i.e.}\ }
\newtheorem{theorem}{Theorem}
\newtheorem{proposition}{Proposition}
\newtheorem{lemma}{Lemma}
\newtheorem{corollary}{Corollary}
\newtheorem{remark}{Remark}
\newtheorem{example}{Example}
\newtheorem{definition}{Definition}
\newtheorem{conjecture}{Conjecture}
\title{Regularity of non-stationary subdivision: a matrix approach\thanks{The authors are grateful to
the Mathematical Institute at Oberwolfach for supporting the Research In Pairs Program in 2013 and to the Italian G.N.C.S.}}
\author{M. Charina\thanks{University of Vienna, Austria, maria.charina@univie.ac.at.}, \  C. Conti\thanks{DIEF-University of Florence, Italy,
 costanza.conti@unifi.it.}, \  N. Guglielmi\thanks{University of L'Aquila, Italy, guglielm@dm.univaq.it.} \ and 
V. Protasov \thanks{Moscow State University, Russia, v-protassov@yandex.ru.}}
\begin{document}
\maketitle

\begin{abstract}
In this paper, we study scalar multivariate non-stationary
subdivision schemes with integer dilation matrix~$M$
and present a unifying, general approach for checking their
convergence and for determining their H\"older regularity
(latter in the case~$M = mI,  m \ge 2$). The
combination of the concepts of asymptotic similarity and
approximate sum rules allows us to link stationary and
non-stationary settings and to employ recent advances in methods
for exact computation of the joint spectral radius. As an
application, we prove a recent conjecture by N. Dyn et al. on the H\"older
regularity of the generalized Daubechies wavelets. We illustrate
our results with several simple examples.
\end{abstract}

\noindent {\bf Keywords:} multivariate non-stationary subdivision schemes,
convergence, H\"older regularity, approximate sum rules, asymptotic similarity,
joint spectral radius, generalized Daubechies wavelets

\noindent{\bf Classification (MSCS): 65D17, 15A60, 39A99}

\pagestyle{myheadings}
\thispagestyle{plain}

\section{Introduction}\label{sec:intro}
We provide a general, unifying method for convergence and
regularity analysis of multivariate non-stationary, i.e.
level-dependent, subdivision schemes with an integer dilation
matrix $M$ whose eigenvalues are all larger than $1$ in the absolute value. It has been
believed until recently that the joint spectral radius approach,  successfully used for the
regularity analysis of stationary subdivisions, is not applicable in the non-stationary setting.
Our results dismiss this belief. We show that the joint spectral radius techniques are
applicable for all non-stationary schemes that satisfy two mild assumptions: all level-dependent
masks have the same bounded support and satisfy the so-called {\em approximate sum rules}. We show that
the approximate sum rules are ``almost necessary'' for convergence and regularity of
non-stationary schemes. We derive sharp
criteria for convergence of non-stationary schemes in the spaces $C^{\ell}\, , \ \ell \ge 0$,
and, in the case $M = mI, m \ge 2$, obtain a formula for the H\"older exponent of their limit functions.
The application of our results allows us, e.g. to determine the H\"older regularity  of
the generalized  Daubechies wavelets and, thus, prove the recent conjecture by N. Dyn et al.~\cite{DKLR}. In this paper we focus mainly on
subdivision schemes, although our results are applicable to all non-stationary refinable functions,
in particular, the ones used for constructions of non-stationary wavelets.

Subdivision schemes are linear iterative algorithms that interpolate or
approximate data on a given polygonal mesh.  From starting data, such
schemes  repeatedly compute local, linear weighted averages of
sequences of real numbers or point in $\RR^s$, $s=2,3$. The weights are real
numbers that define the so-called subdivision mask of the scheme. The scheme
converges, if the subdivision recursion  generates a continuous limit
function from every starting data sequence.  The first subdivision
scheme is the corner cutting algorithm by de Rham \cite{Rham} that generates smooth curves 
from the given vertices of a polygon in $\RR^2$.

Subdivision schemes are simple to implement,
intuitive in use, and possess many other nice   properties (linearity, shift-invariance, etc).
This motivates their wide popularity in modeling freeform
curves and surfaces and in computer animation. The potential of subdivision schemes
has recently also become apparent in the context of isogeometric analysis, a modern computational
approach that integrates finite element analysis into conventional CAD systems. Thus, in the last ten years,
there has been an increase of interest in subdivision schemes. The main questions when analyzing
any scheme are its convergence and the regularity of its limit functions. Both of these questions
can be answered effectively using the {\em matrix approach} that reduces these questions to computation
of the {\em joint spectral radius} of a special, compact set of square matrices.

Non-stationary subdivision schemes were introduced to enrich the class of
limit functions of stationary schemes and have very
different and distinguished properties. Indeed, it is well-known that stationary subdivision schemes are not capable
of generating circles, ellipses, or, in general, of generating exponential (quasi-) polynomials $x^\gamma e^{\lambda \cdot x}$,
$x \in \RR^s$, $\gamma \in  \NN_0^s$, $\lambda \in \CC^s$, while non-stationary schemes generate function spaces
that are much richer and include exponential polynomials as well as exponential $B-$splines (see e.g. \cite{BR, Unser1, Dyn2, JiaLei, Han, LycheMaz, Ron_exp_box_splines}). This generation property is
important in several applications, e.g. in biological imaging
\cite{Unser2, Unser3}, geometric design \cite{Yoon1, Romani, W} and in
isogeometric analysis (see \cite{Burkhart10,COS00,CSAOS02,ISO} and references therein).  The interest in non-stationary subdivision schemes is also due to the fact that they include Hermite schemes that do
not only model curves and surfaces, but also their gradient
fields. Such schemes are used in geometric modeling and
biological imaging, see e.g. \cite{Manni1, LycheMer1, LycheMer2,
Manni3, Unser6}. Additionally, multi-resolution analysis based
on any stationary subdivision scheme uses the same filters at each
level of the decomposition and reconstruction. On the contrary,  non-stationary
wavelet and frame constructions are level adapted and more
flexible, see e.g. \cite{DKLR, GoodmanLee, Han, Unser4, Unser5}.
Unfortunately, in practice, the use of subdivision is mostly  restricted to the  class of stationary subdivision
schemes. One reason for that is the lack of general methods for their analysis, especially  methods for their
convergence and regularity analysis. This motivates our study.

The main difficulty is that the matrix approach cannot be directly extended to
the non-stationary setting. In the stationary case, the H\"older regularity of
subdivision limits is derived from the joint spectral radius of a finite set of
linear operators which are restrictions of {\em transition matrices} of the subdivision
scheme to their special linear subspace. In the non-stationary case, one deals with a sequence of transition matrices and a
sequence of their corresponding linear subspaces. Both  may not converge a priori.
Deep analysis of such sequences allows us to prove that the sequence of such linear
subspaces does possess a limiting subspace (provided the scheme converges). Moreover,
as in the stationary case, we show how to express the H\"older regularity of non-stationary
subdivision in terms of the joint spectral radius of the limit points of the sequence of
transition matrices restricted to this limiting linear subspace.
Both results provide a powerful tool for analysis of  non-stationary subdivision schemes.
Several numerical examples demonstrate the efficiency of our method.

Finally, note  that there is another class of non-stationary schemes that can generate $C^\infty$ limits
(Rvachev-type functions) with bounded support, see \cite{DL2002}.
The trade-off is that the supports of their level-dependent subdivision
masks grow from level to level of the subdivision recursion.
Our approach for regularity analysis is based on computations of the joint spectral
radius of a compact set of matrices. Therefore, naturally, it does not apply
to Rvachev-type schemes, since, the corresponding matrix sets are unbounded. For analysis and applications of
Rvachev-type schemes we refer the reader, for example, to \cite{CohenDyn, DL2002, Han2}.

\subsection{Framework}

\smallskip \n Let $M=mI$, $m \ge 2$. Given an initial set of  data $\cb^{(1)}:=\{ c^{(1)}(\alpha) \in \RR,\
\alpha\in \ZZ^s\}$ a subdivision scheme iteratively constructs a sequence of progressively denser data by means of \emph{local refinement rules} which are based on the sequence of \emph{subdivision operators}
$\{S_{\mathbf{a}^{(k)}},\ k\ge 1\}$. The subdivision operators
$S_{\mathbf{a}^{(k)}}: \ell(\ZZ^s) \rightarrow \ell(\ZZ^s)$ are
linear operators and map coarser sequences $\cb^{(k)} \in
\ell(\ZZ^s)$ into finer sequences $\cb^{(k+1)} \in \ell(\ZZ^s)$
via the rules
\begin{equation}\label{def:refequation}
  \cb^{(k+1)}:=S_{\mathbf{a}^{(k)}} \cb^{(k)}, \ \  S_{\mathbf{a}^{(k)}}
  \cb^{(k)}(\alpha):=\sum_{\beta \in\ZZ^s} \ra^{(k)}(\alpha-M\beta) c^{(k)}(\beta), \ \ k \ge 1, \ \ \alpha \in \ZZ^s\,.
\end{equation}
The \emph{masks} $\{\mathbf{a}^{(k)},\ k\ge 1\}$ are sequences of real numbers
$\mathbf{a}^{(k)}:=\{\ra^{(k)}(\alpha) \in \RR, \ \alpha \in \ZZ^s\}$
and we
assume that they  have bounded supports in $\{0,\ldots,N\}^s$ with $N \in
\NN$. Such schemes are called either \emph{level-dependent}, or
\emph{non-stationary} or \emph{non-homogeneous}. Here we use the
term \emph{non-stationary} and denote these type of subdivision
schemes by the corresponding collection of subdivision operators
$\{S_{\ab^{(k)}}, \ k\ge 1 \}$. A subdivision scheme whose
refinement rules are level independent is said to be
\emph{stationary} (see, \cite{CaravettaDahmenMicchelli}, for
example) and, for all $k \ge 1$, is defined by the same sequence
$\mathbf{a}:=\{\ra(\alpha) \in \RR,\ \alpha\in \ZZ^s\}$ of refinement
coefficients, i.e. $\mathbf{a}^{(k)}=\mathbf{a}$, $k \ge 1$. The
corresponding subdivision scheme is therefore  denoted by $S_\ab$.
There is a multitude of results on convergence and regularity of stationary subdivision
schemes in the literature (for example see \cite{CaravettaDahmenMicchelli, CharinaContiSauer2005,
ChenJiaRie, J95, H03, MicSauer97} and references therein). These results rely on polynomial generation and reproduction properties
of subdivision operators and employ the so-called restricted spectral radius or the joint spectral
radius techniques. It has been believed until recently that these two concepts have no immediate application
in the non-stationary setting. The reason for this belief is that convergent non-stationary schemes do not necessarily
generate or reproduce any polynomial spaces, see e.g. \cite{ContiGRomani}.

In this paper, we make use of  the concepts of \emph{approximate sum rules} and \emph{asymptotic
similarity} to link stationary and non-stationary settings and show how to employ
the \emph{joint spectral radius} for smoothness analysis of non-stationary schemes.
This allows us to provide a general and unifying approach for the analysis of convergence and
regularity of a vast majority of non-stationary subdivision schemes. Our results
generalize the existing well-known methods in \cite{ContiDynManniMazure13, DynLevin, DynLevinYoon},
which only  allow us to check convergence and H\"older regularity of special instances of
non-stationary schemes. In fact, the sufficient conditions in \cite{DynLevin} are based on the concept of \emph{asymptotic equivalence}
which we recall in the following  Definition \ref{def:asymptotic_equivalence},
where $E$ is a set of representatives of $\ZZ^s /
M \ZZ^s$, i.e. $E \simeq \{0,\ldots,m-1\}^s$.

\begin{definition} \label{def:asymptotic_equivalence}  Let $\ell \ge 0$.
Two non-stationary schemes $\{S_{\mathbf{a}^{(k)}}, \ k \ge 1\}$ and $\{S_{\mathbf{b}^{(k)}}, \ k \ge 1\}$ are called \emph{asymptotically equivalent (of order $\ell$)}, if they satisfy
\begin{equation}\label{def:AsEq}
\sum_{k=1}^\infty m^{k \ell}
\|S_{\mathbf{a}^{(k)}}-S_{\mathbf{b}^{(k)}}\|_\infty <\infty,
\ \hbox{for} \
\|S_{\mathbf{a}^{(k)}}\|_\infty:=\max_{\varepsilon\in
E}\left\{\sum_{\alpha\in
\ZZ^s}|\ra^{(k)}(M\alpha+\varepsilon)|\right\}\,.
\end{equation}
\end{definition}
In the case of $M=2I$ and under certain additional assumptions on
the schemes $\{S_{\mathbf{a}^{(k)}}, \ k \ge 1\}$ and
$\{S_{\mathbf{b}^{(k)}}, \ k \ge 1\}$, the method in
\cite{DynLevin} allows us to determine the regularity of
$\{S_{\mathbf{a}^{(k)}}, \ k \ge 1\}$ from the known regularity of
the asymptotically equivalent scheme $\{S_{\mathbf{b}^{(k)}}, \ k \ge
1\}$. In \cite{DynLevinYoon}, in the univariate binary case,
the authors relax the condition of asymptotic equivalence.
They require that the $D^j$-th derivatives of the symbols
$$
 a_*^{(k)}(z):=\sum_{\alpha \in \ZZ} \ra^{(k)}(\alpha)z^\alpha, \quad z \in \CC \setminus \{0\}, \quad k\ge 1,
$$
of the non-stationary scheme $\{S_{\ab^{(k)}}, \ k\ge 1 \}$ satisfy
\begin{equation} \label{cond:ConditionA}
  |D^j a_*^{(k)}(-1)| \le C \, 2^{-(\ell+1-j)k}, \quad j=0,\ldots,\ell,
  \quad \ell \ge 0,\quad C \ge 0,
\end{equation}
and, additionally, assume that the non-stationary scheme is
asymptotically equivalent (of order $0$) to some stationary
scheme.  The conditions in \eqref{cond:ConditionA} can be seen as
a generalization of the so-called sum rules in \eqref{def:sumrules}. In the stationary case, sum rules are
necessary for smoothness of subdivision, see e.g \cite{Cabrelli,
CaravettaDahmenMicchelli, JetterPlonka, JiaJiang}.

\begin{definition}  Let $\ell \ge 0$.
The symbol $a_*(z)$, $z \in \CC^s \setminus \{0\}$, satisfies sum
rules of order $\ell+1$ if
\begin{equation} \label{def:sumrules}
 {a}_*(1)=m^s \quad\hbox{and}\quad
 \max_{|\eta| \le \ell}\  \max_{\epsilon \in \Xi \setminus
\{1\}} | D^\eta a_*(\epsilon)|=0\,.
\end{equation}
\end{definition}

\noindent In the above definition, $\Xi:=\{e^{-i\frac{2\pi}{m}\varepsilon}, \ \varepsilon \in E\}$ and
$D^\eta$, $\eta \in \NN_0^s$, denotes  the $\eta-$th directional derivative.

\smallskip \noindent In the spirit of (\ref{cond:ConditionA}),  in this paper, we present
a generalization of the notion of sum rules which we call
\emph{approximate sum rules}.

\begin{definition} \label{def:approx_sum_rules} Let $\ell \ge 0$. The sequence of
symbols $\{a^{(k)}_*(z),\ \ k \ge 1\}$ satisfies
approximate sum rules of order $\ell+1$, if
  \begin{equation}\label{def:delta_k}
 \mu_k:=|{a}_*^{(k)}(1)-m^s| \quad \hbox{and}\quad
 \delta_k\ :=\ \max_{|\eta| \le \ell} \max_{\epsilon \in \Xi \setminus \{1\}} |{m^{-k|\eta|}  D^\eta {a}_*^{(k)}(\epsilon)|}
  \end{equation}
satisfy
  \begin{equation}\label{proporties:delta_k}
 \sum_{k=1}^{\infty} \mu_k < \infty \quad \hbox{and}\quad  \sum_{k=1}^{\infty} m^{\, k\ell}\, \delta_k < \infty\,.
  \end{equation}
\end{definition}

\smallskip \noindent We call the sequence $\{\delta_k, \ k \ge 1\}$ \emph{sum rule defects}.
If the sequences $\{\mu_k, \ k \ge 1\}$ and $\{\delta_k, \ k \ge
1\}$ are zero sequences, then the symbols of the corresponding
non-stationary scheme satisfy sum rules of order $\ell+1$.

\smallskip \noindent  Note that, even in the univariate binary case, the assumption on
$\{\delta_k, \ k \ge 1\}$ in \eqref{proporties:delta_k}, i.e.
\begin{equation} \label{def:deltak_uni}
 \sum_{k=1}^\infty 2^{\ell k} \delta_k < \infty, \quad
 \delta_k:=\ \max_{j \le \ell} {2^{-k\, j} | D^j {a}_*^{(k)}(-1)|},
\end{equation}
is less restrictive than the decay condition on $\{\delta_k, \ k
\ge 1\}$ in \eqref{cond:ConditionA}.  In Theorem \ref{th:neccesary_conditions}, we showed that approximate sum rules
are close to being necessary conditions for regularity of
non-stationary schemes, i.e. even in the univariate binary setting, the sum rules
defects~$\{\delta_k, \ k\ge 1\}$ must decay faster than $2^{\, -\ell k}$, if the
limit functions of the scheme are $C^\ell$. Indeed, in the
binary univariate case, we show that under assumption of
asymptotical similarity (see Definition
\ref{def:asympt-similarity}) to a stationary scheme  whose  basic refinable function is
stable, the $C^\ell-$regularity of the non-stationary scheme
implies that the sum rules defects~$\{\delta_k, \ k\ge 1\}$ must
decay faster than $2^{\, -\ell k}$. Clearly,  there is  still a gap between the corresponding
necessary condition $\displaystyle \lim_{k \to \infty} 2^{-\ell k}\delta_k =
0$ and one of the sufficient conditions $\displaystyle \sum_{k \in
\NN} 2^{-\ell k} \delta_k < \infty$. See also Example \ref{ex:no_approx_sum_rules}. Moreover,  in
\cite{CRY}, the authors proved that this decay rate of the sum rules defects is necessary for generation
of  $\ell$ linearly independent functions from $\{x^\gamma e^{\lambda x}, \ \gamma \in \NN_0, \ \lambda \in \CC\}$.
This resembles the stationary setting and motivates our multivariate convergence and
smoothness analysis of non-stationary schemes.

\smallskip \noindent In \cite{ContiDynManniMazure13}, in the univariate binary
non-stationary setting, milder sufficient conditions than asymptotic equivalence  are
essentially derived under the assumptions that the scheme
$\{S_{\mathbf{a}^{(k)}},\ \ k \ge 1\}$ is \emph{asymptotically
similar} to a suitable non-stationary scheme
$\{S_{\mathbf{b}^{(k)}},\ \ k \ge 1\}$, \ie
$\lim\limits_{k\rightarrow
\infty}\|\mathbf{a}^{(k)}-\mathbf{b}^{(k)}\|_\infty=0\,$, and both
satisfy sum rules of order $1$. Here we generalize the notion of
asymptotic similarity making use of the following concept of
\emph{set of limit points} of a sequence of masks.

\begin{definition} \label{def:set_of_limit_points} For the mask sequence
$\{\mathbf{a}^{(k)},\ \ k \ge 1\}$ we denote by $\cA$ the set of
its limit points, i.e. the set of masks $\mathbf{a}$ such that
$$ \mathbf{a}\in \cA,\quad \hbox{if} \quad \exists \{k_n,\ n\in \NN \}\ \ \mbox{such that}\
 \ \lim_{n\rightarrow\infty}\mathbf{a}^{(k_n)}=\ab\,.
$$
\end{definition}
The following definition of \emph{asymptotic similarity}
generalizes the one given in \cite{ContiDynManniMazure13}. This
notion allows us to relate the properties of non-stationary
subdivision schemes to the corresponding properties of the
stationary masks in $\cal A$.

\begin{definition} \label{def:asympt-similarity}
Two non-stationary schemes $\{S_{\mathbf{a}^{(k)}},\ \ k \ge 1\}$ and   $\{S_{\mathbf{b}^{(k)}},\ \ k \ge 1\}$ are called \emph{asymptotically similar}, if their sets of limit points coincide.
\end{definition}

\subsection{Summary of the results}\label{subsec:summary}
For the reader's convenience,  we summarize here the main results
presented in this paper. The details are given in \S
\ref{sec:convergence_regularity}.

\noindent In the rest of the paper we assume that the symbols
$\{{a}_*^{(k)}(z), \ k \ge 1\}$ satisfy approximate sum rules and
are re-scaled in such a way that ${a}_*^{(k)}(1)=m^s$, $k \ge 1$.
In this case $\mu_k$ in \eqref{def:delta_k} are equal to zero for
all $k \ge 1$ and do not affect our convergence and regularity
analysis. On the contrary, if the sequence $\{\mu_k, \ k \ge 1\}$
is not summable, then such a re-scaling can change the properties
of the scheme, see Example \ref{ex:no_approx_sum_rules}.

\smallskip \noindent
One of our results states that even in the univariate case
approximate sum rules are close to being necessary for
convergence and smoothness of non-stationary subdivision schemes.

\begin{theorem} \label{th:neccesary_conditions} Let $\ell \ge 0$.
Assume that a univariate binary
subdivision scheme $S_\mathbf{a}$ is convergent and its basic
refinable limit function is stable. Assume, furthermore, that $
\displaystyle
 \mathbf{a}=\lim_{k \rightarrow \infty} \mathbf{a}^{(k)}
$ and the non-stationary subdivision scheme
$\{S_\mathbf{a}^{(k)},\ k\ge 1\}$ converges to $C^\ell$ limit
functions. Then $\displaystyle \lim_{k \to \infty} 2^{\, \ell k}
\delta_k = 0$ for $\{\delta_k, \ k \ge 1\}$ in \eqref{def:deltak_uni}.
\end{theorem}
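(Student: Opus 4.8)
\medskip
\noindent\emph{Proof plan.} I would argue on the Fourier side. Write $\widehat{\phi}$ for the (compactly supported, continuous, stable) basic limit function of the stationary scheme $S_{\ab}$, and for $k\ge1$ let $\Phi^{(k)}$ denote the basic limit function of the tail scheme $\{S_{\ab^{(n)}},\ n\ge k\}$; these are compactly supported, continuous (the scheme converges), satisfy the refinement relation $\widehat{\Phi^{(k)}}(\xi)=\tfrac12 a_*^{(k)}(e^{-i\xi/2})\,\widehat{\Phi^{(k+1)}}(\xi/2)$ and, by telescoping, $\widehat{\Phi^{(1)}}(\xi)=\Pi_{k-1}(\xi)\,\widehat{\Phi^{(k)}}(\xi/2^{k-1})$ with $\Pi_{k-1}(\xi):=\prod_{n=1}^{k-1}\tfrac12 a_*^{(n)}(e^{-i\xi/2^n})$; the running normalization $a_*^{(k)}(1)=2$ gives $\widehat{\Phi^{(k)}}(0)=1$, and $\Phi^{(1)}\in C^\ell$ by hypothesis. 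First I would reduce the claim to a statement about the size of $a_*^{(k)}$ on a shrinking neighbourhood of $z=-1$: using a Bernstein--Markov inequality for trigonometric polynomials of degree $\le N$ on an interval of length $r_0 2^{-k}$, a Taylor expansion at $z=-1$, and the uniform-in-$k$ bound on the coefficients of $a_*^{(k)}$ (the masks have support in $\{0,\dots,N\}$ and converge), one gets $\delta_k \lesssim A_k$ and $A_k \lesssim \delta_k + 2^{-(\ell+1)k}$, where $A_k:=\max_{|\eta-\pi|\le r_0 2^{-k}}|a_*^{(k)}(e^{-i\eta})|$ and $r_0>0$ is a small fixed constant; hence it suffices to prove $A_k=o(2^{-\ell k})$.

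\medskip
\noindent\emph{The two auxiliary facts.} Next I would establish two things. (i) \emph{Uniform stability of the $\Phi^{(k)}$:} since $\ab^{(k)}\to\ab$ and $S_{\ab}$ is convergent, the products defining $\widehat{\Phi^{(k)}}$ tend to $\widehat{\phi}$ uniformly on compacta and admit uniform-in-$k$ envelopes, so the tails of the bracket sums $[\widehat{\Phi^{(k)}},\widehat{\Phi^{(k)}}](\xi):=\sum_{j\in\ZZ}|\widehat{\Phi^{(k)}}(\xi+2\pi j)|^2$ are uniformly small; together with the stability of $\phi$ this yields $c>0$ and $k_0$ with $[\widehat{\Phi^{(k)}},\widehat{\Phi^{(k)}}](\xi)\ge c$ for all $\xi\in\RR$, $k\ge k_0$. (ii) \emph{$\Pi_{k-1}$ is harmless at the relevant frequencies:} if $m$ is odd and $|s|\le r_0$, then at $\xi=2^k\pi m+s$ each factor of $\Pi_{k-1}$ has argument $2^{k-n}\pi m+s/2^n$ with $2^{k-n}m$ even, so $\tfrac12 a_*^{(n)}(e^{-i\xi/2^n})=1+O(2^{-n})$ uniformly in $m,s,k$, whence $c_1\le|\Pi_{k-1}(2^k\pi m+s)|\le c_2$ for absolute constants $0<c_1\le c_2$ (shrinking $r_0$ if necessary).

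\medskip
\noindent\emph{Conclusion.} Then I would combine these as follows. Let $\eta_k^\ast$ realize $A_k$ and set $t_k:=\eta_k^\ast-\pi$ ($|t_k|\le r_0 2^{-k}$). By $2\pi$-periodicity of $\eta\mapsto a_*^{(k)}(e^{-i\eta})$, one has $A_k=|a_*^{(k)}(e^{-i(\pi m+t_k)})|$ for every odd $m$, and the refinement relation at $\xi=2\pi m+2t_k$ gives $A_k\,|\widehat{\Phi^{(k+1)}}(\pi m+t_k)|=2\,|\widehat{\Phi^{(k)}}(2\pi m+2t_k)|$. Squaring, summing over odd $m$, noting $\sum_{m\ \text{odd}}|\widehat{\Phi^{(k+1)}}(\pi m+t_k)|^2=[\widehat{\Phi^{(k+1)}},\widehat{\Phi^{(k+1)}}](\pi+t_k)$, and substituting $\widehat{\Phi^{(k)}}(2\pi m+2t_k)=\widehat{\Phi^{(1)}}(2^k\pi m+s_k)/\Pi_{k-1}(2^k\pi m+s_k)$ with $s_k:=2^k t_k$, $|s_k|\le r_0$, facts (i)--(ii) yield, for $k\ge k_0$,
\begin{align*}
c\,A_k^2 \ &\le\ A_k^2\,[\widehat{\Phi^{(k+1)}},\widehat{\Phi^{(k+1)}}](\pi+t_k)\ =\ 4\sum_{m\ \text{odd}}\big|\widehat{\Phi^{(k)}}(2\pi m+2t_k)\big|^2\\
&\le\ \frac{4}{c_1^2}\sum_{m\ \text{odd}}\big|\widehat{\Phi^{(1)}}(2^k\pi m+s_k)\big|^2 .
\end{align*}
Finally, since $\Phi^{(1)}\in C^\ell$ is compactly supported, $(1+|\xi|)^\ell\widehat{\Phi^{(1)}}(\xi)\in L^2$ and $|\widehat{\Phi^{(1)}}(\xi)|=o(|\xi|^{-\ell})$ (integrating by parts $\ell$ times and applying Riemann--Lebesgue to $D^\ell\Phi^{(1)}$); splitting the last sum into finitely many small-$m$ terms (controlled by the $o$-rate) and a large-$m$ tail (controlled via $|\widehat{\Phi^{(1)}}(\xi)|\le C|\xi|^{-\ell}$ and $\sum_{|m|>R}|m|^{-2\ell}\to0$ for $\ell\ge1$, and via Poisson summation for $\ell=0$, where the odd-$m$ part is the difference of two Riemann sums with the same limit) shows it is $o(2^{-2\ell k})$, uniformly for $|s_k|\le r_0$. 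Hence $A_k^2=o(2^{-2\ell k})$, so $A_k=o(2^{-\ell k})$, and therefore $\delta_k=o(2^{-\ell k})$, i.e. $\lim_{k\to\infty}2^{\ell k}\delta_k=0$.

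\medskip
\noindent\emph{Main obstacle.} The crux is fact (i): transferring stability from the stationary limit $\phi$ to the whole tail of the non-stationary family while keeping a quantitative lower bound on the bracket sums requires uniform-in-$k$ control of the symbol products at infinity, and this is the only place where the stability hypothesis on $\phi$ and the convergence $\ab^{(k)}\to\ab$ are genuinely used. The remaining ingredients --- the telescoping identity, the $O(2^{-n})$ estimate for $\Pi_{k-1}$, and the decay of $\widehat{\Phi^{(1)}}$ forced by $C^\ell$-regularity --- are routine, apart from some care in the borderline bookkeeping when $\ell=0$.
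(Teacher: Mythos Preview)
Your proof is correct and takes a genuinely different route from the paper's.

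The paper argues pointwise. It first observes that stability of $\phi$ is equivalent to the limit trigonometric symbol $p(\omega)=\tfrac12 a_*(e^{-2\pi i\omega})$ having no \emph{symmetric roots} (no $\omega$ with $p(\omega)=p(\omega+\tfrac12)=0$). A binary-tree argument then produces a single odd integer $d$ such that the orbit $\{2^{-1}d\},\{2^{-2}d\},\ldots$ avoids all roots of $p$. Evaluating $\widehat{\Phi^{(1)}}(\omega)=\prod_{n\ge1}p_n(2^{-n}\omega)$ at $\omega=2^{k-1}d+t$, $t\in[0,\sigma]$, makes the infinite product split into three pieces that are bounded below (by the choice of $d$ and by $p_n(0)=1$) and the single isolated factor $p_k(\tfrac12+2^{-k}t)$. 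The decay $\widehat{\Phi^{(1)}}(\omega)=o(|\omega|^{-\ell})$ then forces $p_k(\tfrac12+2^{-k}t)=o(2^{-\ell k})$ uniformly in $t$, and the same Taylor/Markov argument you use converts this into $\delta_k=o(2^{-\ell k})$.

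Your approach replaces the single carefully chosen $d$ by an $\ell^2$ sum over \emph{all} odd $m$, and replaces ``no symmetric roots'' by the equivalent bracket-sum formulation of stability. Your telescoping bound on $\Pi_{k-1}$ is exactly the paper's estimate on the ``first factor.'' The trade-off: the paper's route is completely elementary (no $L^2$ theory) and packages the core step as a stand-alone statement about infinite products of trigonometric polynomials; your route is closer to the transfer-operator/wavelet literature and avoids the combinatorial search for $d$.

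One remark on your ``main obstacle.'' You only need the lower bound $[\widehat{\Phi^{(k+1)}},\widehat{\Phi^{(k+1)}}](\pi+t_k)\ge c$ along the particular sequence $t_k\to0$, not uniformly in $\xi$. This makes fact~(i) much easier than your sketch suggests: by stability of $\phi$ there is $j_0\in\ZZ$ with $\widehat\phi(\pi+2\pi j_0)\neq0$; since $\widehat{\Phi^{(k+1)}}\to\widehat\phi$ uniformly on compacta (straight from the product formula) and $t_k\to0$, a single term of the bracket sum already gives
\[
[\widehat{\Phi^{(k+1)}},\widehat{\Phi^{(k+1)}}](\pi+t_k)\ \ge\ |\widehat{\Phi^{(k+1)}}(\pi+2\pi j_0+t_k)|^2\ \longrightarrow\ |\widehat\phi(\pi+2\pi j_0)|^2>0,
\]
so no uniform-in-$k$ envelope for $\widehat{\Phi^{(k)}}$ is needed. (Also, the case $\ell=0$ is immediate from $\ab^{(k)}\to\ab$ together with $a_*(-1)=0$, so the Poisson-summation detour can be dropped.)
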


\medskip \noindent The proof of Theorem \ref{th:neccesary_conditions} is given
in \S \ref{sec:necessary}. Thus, if the scheme converges to a $C^{\ell}$ limit function, then
the sum rule defects $\{\delta_k, \ k \ge 1\}$ decay faster than  $2^{\, - \ell k}$, i.e.,
 satisfy $\delta_k \, = \, o(2^{\, - \ell k})$.
This does not imply the approximate sum rules, i.e. $\sum_{k \in \NN}\delta_k 2^{\, \ell k} < 0$,
but is close to this condition. Thus, Theorem~\ref{th:neccesary_conditions} indicates that
approximate sum rules is a natural assumption for convergence to a $C^{\ell}$ limit,
and cannot be relaxed by much.

\smallskip \noindent In the stationary case,  the H\"older regularity of the
subdivision limits, as well as the rate of convergence of the
corresponding subdivision scheme $S_{\mathbf{a}}$, are determined
explicitly in terms of the joint spectral radius of the set of
certain square matrices which are derived from the subdivision
mask $\mathbf{a}$ and depend on the order of sum rules satisfied
by $a_*(z)$. Since, in the non-stationary setting, one cannot
assume that all subdivision symbols $\{a^{(k)}_*(z),\ k\ge 1\}$
satisfy sum rules, see \cite{CharinaContiRomani13, ContiRomani11},
the concept of the joint spectral radius is not directly
applicable and has no straightforward generalization. For this
reason, in Theorem \ref{th:main_Hoelder}, we establish a link
between stationary and non-stationary settings via the set $\cA$
of limit points of $\{\mathbf{a}^{(k)},\ k\ge 1\}$ and provide
sufficient conditions for $C^\ell-$convergence, $\ell \ge 0$, and
H\"older regularity of non-stationary schemes. Under
$C^\ell-$convergence we understand the convergence of subdivision
in the norm of $C^\ell(\RR^s)$, see Definition
\ref{def:Cellconvergence} in \S \ref{sec:background}. Note
that $C^0-$convergence is the usual convergence of subdivision in
$\ell_\infty$ norm and $C^\ell-$convergence implies the
convergence of the scheme to $C^\ell$ limit functions, but not
vice versa, see Definition \ref{def:convergenceTOCell} in \S
\ref{sec:background}. As in the stationary setting, each mask in
the limit set $\cA$ determines a set of transition matrices, see e.g. \eqref{def:Tepsilon_a}. We
denote the collection of the restrictions of all these transition
matrices to a given finite dimensional difference subspace
$V_\ell$ by $\cT_{\cA}|_{V_\ell}$, see \S \ref{sec:background} for more details. Theorem \ref{th:main_Hoelder}
states that $C^\ell-$convergence and H\"older regularity of
non-stationary schemes is determined by the joint spectral radius
$\rho_\cA$ of this collection $\cT_{\cA}|_{V_\ell}$.

\begin{theorem} \label{th:main_Hoelder} Let $\ell \ge 0$ and
$\{\delta_k, \ k\ge 1\}$ be defined in \eqref{def:delta_k}. Assume
that the symbols of $\{ S_{{\mathbf a}^{(k)}}, \ k \ge 1\}$
satisfy approximate sum rules of order $\ell+1$ and
$\rho_\cA:=\rho\left(\cT_{\cA}|_{V_\ell}\right)<m^{-\ell}$, where
$\cA$ is the set of limit points of $\{\mathbf{a}^{(k)},\ k\ge
1\}$. Then the non-stationary scheme $\{ S_{{\mathbf a}^{(k)}}, \
k \ge 1\}$ is $C^{\ell}-$convergent and the H\"older exponent
$\alpha$ of its limit functions satisfies
$$
 \alpha \ \ge \ \min \, \Bigl\{\, -\log_m \rho_\cA\, , \,
 - \limsup\limits_{k \to \infty}\frac{\log_m \delta_k}{k}\, \Bigr\}.
$$
\end{theorem}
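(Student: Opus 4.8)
\noindent\emph{Proof plan.} The plan is to reduce $C^\ell$-convergence and H\"older regularity to a decay estimate for the iterated differences of the control data, to propagate that estimate through the products of the transition matrices \eqref{def:Tepsilon_a} of the level-dependent masks, and to use the approximate sum rules of Definition~\ref{def:approx_sum_rules} to absorb the level-dependent failure of exact sum rules as a perturbation which is summable precisely because of the weight $m^{k\ell}$ in \eqref{proporties:delta_k}. Let $\RR^\Omega$ be the finite-dimensional space on which the transition matrices act over a fixed window $\Omega=\Omega(N,\ell,m)$, split as $\RR^\Omega=W\oplus V_\ell$, where $W$ is spanned by the restrictions to $\Omega$ of polynomials of degree at most $\ell$ and $V_\ell$ is the difference subspace of \S\ref{sec:background}. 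Since $a_*^{(k)}(1)=m^s$ for every $k$, the subspace $W$ is invariant under every $T_\varepsilon(\mathbf a^{(k)})$, so each $T_\varepsilon(\mathbf a^{(k)})$ is block upper triangular with respect to $W\oplus V_\ell$; here $|_{V_\ell}$ denotes the induced map on $V_\ell\cong\RR^\Omega/W$. One subdivision step acts on windowed data by $\cb^{(k+1)}|_\Omega=T_{\varepsilon_k}(\mathbf a^{(k)})\,\cb^{(k)}|_\Omega$ with $\varepsilon_k\in E$ the digit of the window, so, writing $\cb^{(k)}|_\Omega=p_k+v_k$ with $p_k\in W$, $v_k\in V_\ell$, the difference data satisfy \emph{exactly} $v_{k+1}=T_{\varepsilon_k}(\mathbf a^{(k)})|_{V_\ell}\,v_k$, while $p_{k+1}=T_{\varepsilon_k}(\mathbf a^{(k)})|_{W}\,p_k+B_k v_k$ with $B_k\colon V_\ell\to W$ the off-diagonal block. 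Exact sum rules of order $\ell+1$ would give $B_k=0$ and a fixed $T_{\varepsilon_k}(\mathbf a^{(k)})|_{W}$ with eigenvalues $m^{-|\eta|}$, $|\eta|\le\ell$; the first task is to show, from the symbol expressions of these blocks (their entries are linear combinations of the $D^\eta a_*^{(k)}(\epsilon)$ with $\epsilon\in\Xi\setminus\{1\}$, $|\eta|\le\ell$, hence are bounded by $m^{k|\eta|}\delta_k\le m^{k\ell}\delta_k$), that after the derivative scaling multiplying the degree-$j$ part of $W$ by $m^{kj}$ and $V_\ell$ by $m^{k\ell}$ one gets $\|\tilde B_k\|\le C\,m^{k\ell}\delta_k$ and $T_{\varepsilon_k}(\mathbf a^{(k)})|_{\widetilde W}=N+O(\delta_k)$ for a fixed $N$ of spectral radius $1$, using in addition $\mu_k\equiv0$.

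\textbf{Joint spectral radius along the tail.} Since the masks are supported in $\{0,\dots,N\}^s$ and uniformly bounded, the relevant mask space is finite-dimensional, $\cA$ is non-empty and compact, and $\mathbf b\mapsto T_\varepsilon(\mathbf b)|_{V_\ell}$ is continuous, so $\cT_{\cA}|_{V_\ell}$ is compact. Fix $\hat\rho$ with $\rho_\cA<\hat\rho<m^{-\ell}$. By upper semicontinuity of the joint spectral radius there is $\eps>0$ such that the joint spectral radius of $\{\,T_\varepsilon(\mathbf b)|_{V_\ell}:\ \varepsilon\in E,\ \mathrm{dist}(\mathbf b,\cA)\le\eps\,\}$ is still below $\hat\rho$, and there is $k_0$ with $\mathrm{dist}(\mathbf a^{(k)},\cA)\le\eps$ for all $k\ge k_0$. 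Consequently $\bigl\|\,T_{\varepsilon_n}(\mathbf a^{(n)})|_{V_\ell}\cdots T_{\varepsilon_k}(\mathbf a^{(k)})|_{V_\ell}\,\bigr\|\le C\,\hat\rho^{\,n-k+1}$ for $k_0\le k\le n$, the finitely many factors of lower index only changing the constant. Hence $\|v_n\|\le C\,\hat\rho^{\,n}$, i.e. $\|\tilde v_n\|=m^{n\ell}\|v_n\|\le C\,(m^\ell\hat\rho)^n\to0$ geometrically, and $\limsup_n\|v_n\|^{1/n}\le\rho_\cA$ on letting $\hat\rho\downarrow\rho_\cA$.

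\textbf{Polynomial part, $C^\ell$-convergence, H\"older exponent.} Inserting the previous bound into $\tilde p_{k+1}=(N+O(\delta_k))\tilde p_k+\tilde B_k\tilde v_k$ and using $\sum_k\delta_k<\infty$, $\sum_k m^{k\ell}\delta_k<\infty$ and $\sum_k\|\tilde v_k\|<\infty$, a telescoping/Cauchy argument that exploits the block-triangular structure of $N$ (exactly as in the stationary case, now with summable perturbations) shows $\sup_k\|\tilde p_k\|<\infty$ and that $\tilde p_k$ converges; together with $\tilde v_k\to0$ this yields $C^\ell$-convergence of $\{S_{\mathbf a^{(k)}},\ k\ge1\}$ in the sense of Definition~\ref{def:Cellconvergence}. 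For the H\"older exponent one compares $D^\ell$ of the limit $\phi$ with the level-$k$ data: for $|x-y|\asymp m^{-k}$,
\[
 |D^\ell\phi(x)-D^\ell\phi(y)|\ \le\ C\Bigl(\,\|\tilde v_k\|\ +\ \sum_{j\ge k}\|\tilde v_j\|\ +\ \sum_{j\ge k}m^{j\ell}\delta_j\,\Bigr),
\]
where the first two terms are $\lesssim_{\eps}(m^{-k})^{-\log_m\rho_\cA-\ell-\eps}$ and, since $\limsup_j(m^{j\ell}\delta_j)^{1/j}=m^{\ell}\limsup_j\delta_j^{1/j}$, the tail is $\lesssim_{\eps'}(m^{-k})^{-\limsup_j(\log_m\delta_j)/j-\ell-\eps'}$. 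Thus $D^\ell\phi$ is H\"older continuous of every exponent strictly below $\min\{-\log_m\rho_\cA,\,-\limsup_j(\log_m\delta_j)/j\}-\ell$, which is exactly the asserted lower bound on $\alpha$.

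\textbf{Main obstacle.} The delicate point is the first paragraph together with the bookkeeping in the last one: one must pin down the precise powers of $m^k$ with which the sum-rule defects enter the block-triangular decomposition after derivative scaling and verify that they are matched by the weight $m^{k\ell}$ in Definition~\ref{def:approx_sum_rules}, so that both the off-diagonal contamination $\tilde B_k$ and the perturbation of the polynomial block are summable and the contributions of the off-diagonal blocks telescope. A secondary subtlety, absent in the stationary theory, is that the transition matrices do not preserve $V_\ell$, so the whole argument must be carried out through the block decomposition rather than by literally restricting subdivision to the difference subspace. By contrast, the joint-spectral-radius input is routine given the compactness of $\cA$ and upper semicontinuity of $\rho$.
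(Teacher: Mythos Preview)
Your block decomposition rests on a claim that fails for every $\ell\ge 1$: you assert that $a_*^{(k)}(1)=m^s$ alone makes the polynomial space $W$ (restrictions of $\Pi_\ell$ to the window) invariant under each $T_\varepsilon(\mathbf a^{(k)})$, and hence that the difference component evolves \emph{exactly} as $v_{k+1}=T_{\varepsilon_k}(\mathbf a^{(k)})|_{V_\ell}\,v_k$. But $a_*^{(k)}(1)=m^s$ is only sum rules of order~$1$; it says nothing about polynomials of degree $1,\dots,\ell$. In the paper's convention (see \eqref{eq:T_a_after_transformation}--\eqref{eq:T_after_transformation}) the polynomial sequences are \emph{left} eigenvectors of $T_{\varepsilon,\mathbf a}$, so under exact sum rules of order $\ell+1$ it is $V_\ell$ that is invariant, not $W$; under mere approximate sum rules \emph{neither} subspace is invariant. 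Concretely, the $(V_\ell,W)$ block of $T_\varepsilon^{(k)}$ carries the sub-blocks $b^{(k)}_{j,\varepsilon}$ of $\widetilde T^{(k)}_\varepsilon$, which are uniformly bounded but generically nonzero and do \emph{not} tend to zero as $k\to\infty$. Consequently $v_{k+1}$ receives an $O(1)$ contribution from $p_k$ at every step; since $p_k$ does not decay, your bound $\|v_n\|\le C\hat\rho^{\,n}$ collapses, and with it the whole chain leading to the H\"older estimate. (Your own ``Exact sum rules of order $\ell+1$ would give $B_k=0$'' is also off: even in the exact case the matrix in \eqref{eq:T_a_after_transformation} is block \emph{lower} triangular, not block diagonal, so the coupling $W\to V_\ell$ is present already for the limit masks.)

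The paper confronts the same two-way coupling and does not try to triangularise it away. Instead it dominates each $T_\varepsilon^{(k)}$, in the transformation basis, by the explicit $(\ell+2)\times(\ell+2)$ matrix $R_k$ of~\eqref{def:Rj}: the sub-diagonal constants~$c$ encode the bounded blocks $b^{(k)}_{j,\varepsilon}$, the super-diagonal entries $\sigma_k m^{-(\ell-r)k}$ encode the sum-rule defects, and the bottom-right entry is the JSR bound~$\gamma$. The core of the argument is Lemma~\ref{l10}, an induction that controls the column norms of $P_k=R_1\cdots R_k$ \emph{simultaneously} across all $\ell+2$ blocks; the convolution $\eta_k=\sum_{j=0}^k\sigma_j q^{k-j}$ with $q=m^\ell\gamma<1$ is precisely the device that absorbs the interaction of the $O(1)$ sub-diagonal with the small super-diagonal. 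Lemma~\ref{lemma:estimate_products_of_T} transfers this to the actual matrix products, and Propositions~\ref{prop:convergence_Cl}--\ref{prop:Hoelder_proof} read off $C^\ell$-convergence and the bound on~$\alpha$ from the resulting column estimates. Your final H\"older display is also unjustified as stated: the paper obtains the analogous inequality \eqref{eq:estimate_in_Hoelder_proof} through the~$\eta_k$, and in Proposition~\ref{prop:Hoelder_proof} treats separately the borderline case $\limsup_k\sigma_k^{1/k}\ge 1$ (where $-\limsup_k(\log_m\delta_k)/k\le\ell$ and the asserted lower bound follows already from $\phi_1\in C^\ell$), which your tail estimate silently excludes.
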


\smallskip \noindent The proof of Theorem \ref{th:main_Hoelder} is given in \S
\ref{subsec:regularity}. Thus, in the non-stationary case, the smoothness of the limit function
depends not only on the joint spectral radius of the  matrices in $\cal A$, but also
on the rate of decay of the sum rules defects~$\{\delta_k, \ k \ge 1\}$.

\smallskip \noindent Note that, as in the stationary case, the order of
approximate sum rules satisfied by the symbols of a non-stationary
scheme can be much higher than its regularity.

\smallskip \noindent There are several immediate important consequences of Theorem
\ref{th:main_Hoelder}
 that  generalize the corresponding results in
\cite{ContiDynManniMazure13, DynLevin, DynLevinYoon}. For example
the following Corollary extends the results
in \cite{ContiDynManniMazure13} with respect to the dimension of
the space, the regularity of the limit functions and the more
general notion of asymptotic similarity given in Definition
\ref{def:asympt-similarity}.

\begin{corollary}  Let $\ell \ge 0$. Assume that the symbols of the scheme
$\{ S_{{\mathbf a}^{(k)}}, \ k \ge 1\}$   satisfy sum rules of
order $\ell+1$ and
$\rho_\cA:=\rho\left(\cT_{\cA}|_{V_\ell}\right)<m^{-\ell}$, where
$\cA$ is the set of limit points of $\{ {\mathbf a}^{(k)}, \ k \ge
1\}$. Then any other asymptotically similar scheme $\{S_{{\mathbf
b}^{(k)}}, \ k \ge 1\}$ whose symbols  satisfy sum rules of order
$\ell+1$ is $C^\ell-$convergent and the H\"older exponent of its
limit functions is $ \alpha \ \ge -\log_m \rho_\cA$.
\end{corollary}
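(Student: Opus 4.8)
The plan is to deduce the corollary directly from Theorem~\ref{th:main_Hoelder} applied to the scheme $\{S_{\mathbf{b}^{(k)}}, \ k \ge 1\}$, once its hypotheses have been verified. The first step is to observe that exact sum rules are the degenerate case of approximate sum rules in which all defects vanish. Indeed, if every symbol $b_*^{(k)}(z)$ satisfies sum rules of order $\ell+1$, then $b_*^{(k)}(1)=m^s$ and $D^\eta b_*^{(k)}(\epsilon)=0$ for all $|\eta|\le\ell$ and all $\epsilon\in\Xi\setminus\{1\}$; hence the quantities $\mu_k$ and $\delta_k$ of Definition~\ref{def:approx_sum_rules} are zero for every $k$, so the series in \eqref{proporties:delta_k} are identically zero and approximate sum rules of order $\ell+1$ hold trivially. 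The same remark applies verbatim to $\{S_{\mathbf{a}^{(k)}}\}$.

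The second step is to transfer the spectral hypothesis from $\cA$ to the limit set $\cB$ of $\{\mathbf{b}^{(k)}\}$. Here the key structural point is that the collection $\cT_{\cA}|_{V_\ell}$ of restricted transition matrices, and hence its joint spectral radius $\rho_\cA$, is an invariant of the set of limit masks alone: each $\mathbf{a}\in\cA$ contributes precisely the transition matrices in \eqref{def:Tepsilon_a} restricted to the fixed difference subspace $V_\ell$, and no other information about the sequence $\{\mathbf{a}^{(k)}\}$ enters the definition. Since $\{S_{\mathbf{a}^{(k)}}\}$ and $\{S_{\mathbf{b}^{(k)}}\}$ are asymptotically similar, their sets of limit points coincide, $\cB=\cA$ by Definition~\ref{def:asympt-similarity}; consequently $\cT_{\cB}|_{V_\ell}=\cT_{\cA}|_{V_\ell}$ and $\rho_\cB=\rho_\cA<m^{-\ell}$.

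The third step is to invoke Theorem~\ref{th:main_Hoelder} for $\{S_{\mathbf{b}^{(k)}}\}$: the scheme is $C^\ell-$convergent and its H\"older exponent $\alpha$ satisfies
$$
 \alpha \ \ge \ \min\Bigl\{\, -\log_m\rho_\cB,\ -\limsup_{k\to\infty}\frac{\log_m\delta_k}{k}\,\Bigr\}.
$$
Because $\delta_k=0$ for all $k$, the second argument of the minimum equals $+\infty$ under the usual convention $\log_m 0=-\infty$, so the bound collapses to $\alpha\ge -\log_m\rho_\cB=-\log_m\rho_\cA$, which is exactly the assertion. I do not expect any genuine analytic obstacle in this argument; the only point that deserves a careful formulation is the structural observation in the second step, namely that $\rho_\cA$ depends on a non-stationary scheme only through its set of limit masks, which is precisely what makes asymptotic similarity the right notion for carrying the spectral condition from one scheme to the other.
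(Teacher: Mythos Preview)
Your argument is correct and matches the paper's own treatment: the corollary is stated there as an immediate consequence of Theorem~\ref{th:main_Hoelder}, with no separate proof given. Your three steps---sum rules are the zero-defect case of approximate sum rules, asymptotic similarity forces $\cB=\cA$ and hence $\rho_\cB=\rho_\cA$, and then Theorem~\ref{th:main_Hoelder} applied to $\{S_{\mathbf{b}^{(k)}}\}$ with $\delta_k\equiv 0$ collapses the minimum to $-\log_m\rho_\cA$---are exactly the intended unpacking of ``immediate.''
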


\smallskip \noindent Theorem \ref{th:main_Hoelder} provides a  lower bound for the
H\"older exponent of the subdivision limits, whereas the next
result allows us to determine its exact value, under slightly more restrictive
assumptions.

\begin{theorem} \label{th:stable_case}  Let $\ell \ge 0$. Assume that a
stationary scheme $S_{\mathbf{a}}$ is $C^\ell-$convergent with the
stable refinable basic limit function $\phi$ whose H\"older
exponent $\alpha_\phi$ satisfies $\ell \le \alpha_\phi<\ell+1$. If
the symbols of the scheme $\{ S_{{\mathbf a}^{(k)}}, \ k \ge 1\}$
satisfy approximate sum rules of order $\ell+1$, $\displaystyle
\lim_{k \rightarrow \infty} \mathbf{a}^{(k)}=\mathbf{a}$ and,
additionally
$$
 \limsup_{k \rightarrow \infty} \delta_k^{1/k} < \rho_\mathbf{a}:=
 \rho(\{T_{\varepsilon, \mathbf{a}}|_{V_\ell}, \ \varepsilon \in
 E\}),
$$
then the scheme $\{ S_{{\mathbf a}^{(k)}}, \ k \ge 1\}$ is
$C^\ell-$convergent and the H\"older exponent of its limit
functions is also $\alpha_\phi$.
\end{theorem}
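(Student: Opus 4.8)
\noindent The plan is to show that the H\"older exponent $\alpha$ of the limit functions of $\{S_{\mathbf a^{(k)}}, \ k \ge 1\}$ satisfies both $\alpha \ge \alpha_\phi$ and $\alpha \le \alpha_\phi$. The first reduction is that, since $\mathbf a^{(k)} \to \mathbf a$, the set of limit points of $\{\mathbf a^{(k)}, \ k \ge 1\}$ is the singleton $\cA = \{\mathbf a\}$; hence $\cT_\cA|_{V_\ell} = \{T_{\varepsilon,\mathbf a}|_{V_\ell}, \ \varepsilon \in E\}$ and $\rho_\cA = \rho_\mathbf{a}$. The second reduction uses the stationary matrix theory recalled in \S \ref{sec:background}: since $S_\mathbf{a}$ is $C^\ell$-convergent and its basic limit function $\phi$ is stable, we have $\rho_\mathbf{a} < m^{-\ell}$ and $\alpha_\phi = -\log_m \rho_\mathbf{a}$.

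\medskip\noindent For the inequality $\alpha \ge \alpha_\phi$ (together with the $C^\ell$-convergence of the non-stationary scheme) I would simply invoke Theorem \ref{th:main_Hoelder}: its hypotheses hold because the symbols of $\{S_{\mathbf a^{(k)}}\}$ satisfy approximate sum rules of order $\ell+1$ and $\rho_\cA = \rho_\mathbf{a} < m^{-\ell}$. Theorem \ref{th:main_Hoelder} then yields $C^\ell$-convergence and
\[
 \alpha \ \ge \ \min\Bigl\{-\log_m \rho_\cA,\ -\limsup_{k\to\infty}\tfrac{\log_m \delta_k}{k}\Bigr\}.
\]
The first entry equals $-\log_m \rho_\mathbf{a} = \alpha_\phi$. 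The extra hypothesis $\limsup_{k\to\infty}\delta_k^{1/k} < \rho_\mathbf{a}$ is equivalent to $\limsup_{k\to\infty}\tfrac{\log_m\delta_k}{k} < \log_m\rho_\mathbf{a}$, i.e. $-\limsup_{k\to\infty}\tfrac{\log_m\delta_k}{k} > -\log_m\rho_\mathbf{a} = \alpha_\phi$, so the minimum equals $\alpha_\phi$ and $\alpha \ge \alpha_\phi$.

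\medskip\noindent The heart of the proof is the reverse bound $\alpha \le \alpha_\phi$, and this is where the stability of $\phi$ and the \emph{strict} inequality $\limsup_{k\to\infty}\delta_k^{1/k} < \rho_\mathbf{a}$ are genuinely needed. Following the reduction in \S \ref{sec:background}, I would express the $(\ell+1)$-st order differences of the level-$n$ data generated by the non-stationary recursion, started at some fixed large level $k_0$, through products of the (approximately) restricted transition operators $T_{\varepsilon,\mathbf a^{(k)}}|_{V_\ell}$; because at level $k$ only approximate, not exact, sum rules hold, each such restriction is understood modulo a ``leakage'' out of $V_\ell$ of norm $O(\delta_k)$, and, since $\mathbf a^{(k)} \to \mathbf a$, one has $T_{\varepsilon,\mathbf a^{(k)}}|_{V_\ell} = T_{\varepsilon,\mathbf a}|_{V_\ell} + G^{(k)}_\varepsilon$ with $\|G^{(k)}_\varepsilon\| \le C(\|\mathbf a^{(k)}-\mathbf a\|_\infty + \delta_k) \to 0$. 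The two things to establish are then: (a) the exponential growth rate of these non-stationary products is not strictly smaller than $\rho_\mathbf{a}$ — the stationary products $T_{\varepsilon_n,\mathbf a}|_{V_\ell}\cdots T_{\varepsilon_1,\mathbf a}|_{V_\ell}$ realise precisely the rate $\rho_\mathbf{a} = \rho(\{T_{\varepsilon,\mathbf a}|_{V_\ell}, \ \varepsilon \in E\})$, while the perturbations $G^{(k)}_\varepsilon$ vanish and the leakage decays geometrically faster than $\rho_\mathbf{a}^{\, n}$ thanks to $\delta_k = o(\rho_\mathbf{a}^{\, k})$, so the non-stationary products track the stationary ones both from above and from below; and (b) the stability of $\phi$ — equivalently, of the unique limit mask $\mathbf a \in \cA$ — converts the resulting lower bound on the norms of the operator products into a lower bound on the oscillation of the non-stationary limit function at fine scales, which forces its H\"older exponent to be at most $-\log_m\rho_\mathbf{a} = \alpha_\phi$. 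Combining (a) and (b) with the first part gives $\alpha = \alpha_\phi$.

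\medskip\noindent I expect step (a) to be the main obstacle. Since the restriction operators vary with the level and leak out of $V_\ell$, a crude ``the perturbations are small'' argument does not suffice: even perturbations tending to zero, applied at each of $n$ steps with $n \to \infty$, could in principle alter the exponential growth rate. The remedy is to exploit, on the one hand, the actual vanishing of $\|\mathbf a^{(k)}-\mathbf a\|_\infty$ (choosing the starting level $k_0$ large so that all masks involved are as close to $\mathbf a$ as desired) and, on the other hand, the fact that $\delta_k$ decays geometrically faster than $\rho_\mathbf{a}^{\, k}$ — this is exactly where the strict inequality $\limsup_{k\to\infty}\delta_k^{1/k} < \rho_\mathbf{a}$, which is stronger than the summability $\sum_k m^{\ell k}\delta_k < \infty$ built into the approximate sum rules, is used; without it the leakage could inflate the effective contraction rate and the exact value $\alpha_\phi$ of the H\"older exponent would be lost. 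Step (b) is the non-stationary counterpart of the classical lower-regularity estimate for stationary subdivision and, as there, stability of $\phi$ is indispensable to rule out ``accidental'' extra smoothness.
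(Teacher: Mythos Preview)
Your outline is correct and matches the paper's approach: the lower bound $\alpha \ge \alpha_\phi$ comes straight from Theorem~\ref{th:main_Hoelder} (the paper phrases it via Corollary~\ref{c10}), and the upper bound is obtained by producing, for each $r < \rho_{\mathbf a}$, a sequence of increments $\Delta_{m^{-k}h}D^\nu\phi_n$ whose size is bounded below by a constant times $(m^\ell r)^k$, using the identity $\Delta_{m^{-k}h}v(x)=m^{\ell k}T^{(n)}_{\varepsilon_1}\cdots T^{(n+k-1)}_{\varepsilon_k}\Delta_h v(0)$ together with a lower bound on the non-stationary matrix products and a control of the leakage out of $V_\ell$ via $\delta_k = O(\beta^k)$ with $\beta<r<\rho_{\mathbf a}$.

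One point to sharpen: stability does \emph{not} enter in your step~(b). The passage from operator-product norms to function oscillations is a direct identity and needs no stability. Where stability is actually used is inside step~(a): it guarantees that $\alpha_\phi=-\log_m\rho_{\mathbf a}$ equals $-\log_m\rho(\{T_{\varepsilon,\mathbf a}|_{W}\})$ for the smallest common invariant subspace $W\subseteq V_\ell$ containing the relevant difference vector $\tilde y$, so that one may assume $\tilde y$ is not trapped in a proper invariant subspace with a strictly smaller joint spectral radius. The paper then invokes a short lemma (Lemma~\ref{l130}) on long products from compact matrix families converging to a family with $\rho>1$ (after rescaling by $r^{-1}$) to produce, for arbitrarily large $k$, a specific sequence $\varepsilon_1,\ldots,\varepsilon_k$ with $\|Q^{(n)}_{\varepsilon_1}\cdots Q^{(n+k-1)}_{\varepsilon_k}\tilde y\|>Cr^k\|\tilde y\|$ and with all tails dominated by the full product; this tail control is exactly what lets the geometric leakage $\sum(\beta/r)^{k-i}$ be absorbed. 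Finally, the shift to a late starting level $n$ is legitimised by Lemma~\ref{lem:alpha_k=alpha_1}, which says $\alpha_{\phi_n}=\alpha_{\phi_1}$; you alluded to this but should make it explicit.
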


\smallskip \noindent The proof of Theorem \ref{th:stable_case} is given in
\S \ref{subsec:rapidly}. An important special class of non-stationary
schemes that satisfy assumptions of Theorem \ref{th:stable_case} are the schemes
whose symbols satisfy sum rules of order $\ell+1$, see Corollary
\ref{th:cor_stable_case} in \S \ref{subsec:rapidly}.

\smallskip \noindent The main application of Theorem \ref{th:stable_case} is the proof of the following
conjecture by N. Dyn et al. stated in \cite{DKLR} in 2014.

\begin{conjecture} [\cite{DKLR}] \label{conj}  The H\"older
regularity of every generalized Daubechies type wavelet is equal to the H\"older regularity of the
corresponding classical Daubechies wavelet.
\end{conjecture}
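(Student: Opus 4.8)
The plan is to deduce Conjecture~\ref{conj} as a direct application of Theorem~\ref{th:stable_case}, by fitting the generalized Daubechies wavelets into its hypotheses. Recall that the classical Daubechies scaling function $\phi_D$ of order $D$ is the basic limit function of a stationary, interpolatory-type subdivision scheme $S_{\mathbf a}$ whose mask $\mathbf a$ satisfies sum rules of some order $\ell+1$ (with $\ell$ depending on $D$), is compactly supported, $L^2$-stable, and has a known H\"older exponent $\alpha_{\phi_D}$ with $\ell \le \alpha_{\phi_D} < \ell+1$; these facts are classical (Daubechies, Cohen--Daubechies--Feauveau) and the exact value $\alpha_{\phi_D}$ is computed via the joint spectral radius $\rho_{\mathbf a} = \rho(\{T_{\varepsilon,\mathbf a}|_{V_\ell}\})$, so in particular $\rho_{\mathbf a} < 2^{-\ell}$. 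The generalized Daubechies wavelets of~\cite{DKLR} arise from a non-stationary scheme $\{S_{\mathbf a^{(k)}},\ k\ge 1\}$ built so as to reproduce prescribed exponential polynomials; the first step is therefore to record, from the construction in~\cite{DKLR}, that $\mathbf a^{(k)} \to \mathbf a$ as $k\to\infty$ (the level-dependent masks converge to the classical Daubechies mask of the matching order) and that all masks are supported in a common fixed set.

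The second step is to verify the approximate sum rules of order $\ell+1$ for $\{a_*^{(k)}\}$ with a sum rule defect sequence $\{\delta_k\}$ satisfying the sharper requirement $\limsup_{k\to\infty}\delta_k^{1/k} < \rho_{\mathbf a}$. Here one uses the explicit form of the generalized Daubechies symbols: by construction they satisfy exact (exponential) sum rule conditions at the points of $\Xi\setminus\{1\}$ associated with the exponential-polynomial reproduction, and the discrepancy between these exponential conditions and the ordinary polynomial sum rules~\eqref{def:sumrules} is governed by how fast the reproduced exponentials $e^{\lambda_k\cdot}$ (with level-dependent frequencies $\lambda_k\to 0$) approach the polynomial case. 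The known geometric decay $\lambda_k = O(2^{-k})$ inherent in the level-dependent construction forces $\delta_k = O(2^{-k})$, hence $\limsup_k \delta_k^{1/k} \le 1/2$; since $\rho_{\mathbf a} > 1/2$ for the Daubechies schemes in the relevant range (indeed $\rho_{\mathbf a} = 2^{-\alpha_{\phi_D}}$ with $\alpha_{\phi_D}$ non-integer, and a quick check shows $\alpha_{\phi_D} < 1 + \ell$ with room to spare), the strict inequality $\limsup_k \delta_k^{1/k} < \rho_{\mathbf a}$ holds. One also checks $\mu_k = 0$ after the standard renormalization $a_*^{(k)}(1) = 2^s$ (here $s=1$), which the construction already enforces.

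With these ingredients in place, Theorem~\ref{th:stable_case} applies verbatim with the stationary scheme taken to be the classical Daubechies scheme $S_{\mathbf a}$: it yields that the non-stationary generalized Daubechies scheme is $C^\ell$-convergent and that the H\"older exponent of its limit function equals $\alpha_\phi = \alpha_{\phi_D}$, the exponent of the classical Daubechies scaling function. Finally, one transfers this statement from scaling functions to wavelets: the generalized Daubechies wavelet is a finite linear combination of integer shifts of the refined scaling function (via the level-dependent wavelet mask), so its H\"older regularity equals that of the scaling function, exactly as in the stationary case; hence it coincides with the H\"older regularity of the classical Daubechies wavelet of the corresponding order, which is what the conjecture asserts.

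I expect the main obstacle to be the second step: extracting from the construction in~\cite{DKLR} a clean, verified bound $\delta_k = O(2^{-k})$ on the sum rule defects and, in parallel, confirming the strict inequality $\limsup_k \delta_k^{1/k} < \rho_{\mathbf a}$ for \emph{every} order $D$ — this requires knowing that the classical Daubechies joint spectral radius $\rho_{\mathbf a}$ never drops to or below $1/2$ as $D$ grows (equivalently that $\alpha_{\phi_D}$ stays strictly below $\ell+1$ while the defect decay stays at rate $2^{-k}$), which one should justify either by the known asymptotics of Daubechies regularity or by a direct estimate tied to the specific frequency schedule $\lambda_k$ used in the generalized construction. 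Everything else — common support, mask convergence, the scaling-function-to-wavelet transfer — is routine bookkeeping given the results already established in the paper.
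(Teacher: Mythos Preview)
Your overall strategy---reducing the conjecture to an application of Theorem~\ref{th:stable_case}---is exactly what the paper does, and the bookkeeping steps (mask convergence, common support, wavelet-to-scaling-function transfer) are correctly identified. The gap is in your second step, and it is precisely the obstacle you flag in your final paragraph, but your proposed resolution there is wrong.

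You claim $\delta_k = O(2^{-k})$ and then need $\rho_{\mathbf a} > 1/2$, i.e.\ $\alpha_{\phi_D} < 1$, for every order~$D$. That fails already at $D=3$: the classical Daubechies H\"older exponents grow (roughly linearly) with~$D$, so $\rho_{\mathbf a} = 2^{-\alpha_{\phi_D}}$ drops well below $1/2$. The fix is not on the $\rho_{\mathbf a}$ side but on the $\delta_k$ side: the generalized Daubechies symbols of order~$n$ have a zero of order~$n$ at a point $z_k$ with $|z_k - (-1)| = O(2^{-k})$, so a Taylor expansion about $z_k$ gives $|D^j a_*^{(k)}(-1)| = O\bigl(|z_k+1|^{\,n-j}\bigr) = O(2^{-(n-j)k})$, hence $2^{-jk}|D^j a_*^{(k)}(-1)| = O(2^{-nk})$ and $\delta_k = O(2^{-nk})$. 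This is exactly property~(ii) the paper records from~\cite{DKLR}. With that rate one gets $\limsup_k \delta_k^{1/k} \le 2^{-n}$, and the comparison with $\rho_{\mathbf a}$ becomes $2^{-n} < \rho_{\mathbf a}$, equivalently $\alpha_{\varphi_n} < n$, which \emph{is} known for all~$n$. Once you upgrade the defect estimate from $O(2^{-k})$ to $O(2^{-nk})$, your argument goes through and coincides with the paper's.
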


\noindent We prove this conjecture in Theorem \ref{th50} and
compute some of the corresponding H\"older exponents, see \S \ref{subsubsec:Daub_wavelets}.

\medskip \noindent This  paper is organized as follows. In \S
\ref{sec:background}, we summarize important known fact about
stationary and non-stationary subdivision schemes. The proofs of
the results stated in \S \ref{subsec:summary} are given in
\S \ref{sec:convergence_regularity}. In particular, in
\S \ref{subsec:convergence}, we provide sufficient
conditions for convergence of non-stationary subdivision schemes
whose symbols satisfy assumptions of Theorem \ref{th:main_Hoelder}
with $\ell=0$. The argument in the proof of the corresponding
Theorem \ref{teo:convergence} is actually independent of the
choice of the dilation matrix $M$. For that reason we give a
separate proof of convergence and, then, in \S
\ref{subsec:regularity}, present the proof of the more general
statement of Theorem \ref{th:main_Hoelder}. In \S
\ref{subsec:rapidly}, we give the proof of Theorem
\ref{th:stable_case}. We illustrate our convergence and regularity
results with several deliberately simple examples in \S
\ref{subsec:convergence_examples}. There we also prove
Conjecture \ref{conj} formulated in \cite{DKLR} about the regularity of
generalized Daubechies wavelets.  Next, in \S \ref{sec:necessary}, we
prove the necessary conditions stated in Theorem \ref{th:neccesary_conditions}.

\section{Background and preliminary definitions}\label{sec:background}In this section we recall well-known properties of subdivision
schemes. We start by defining convergence and H\"older regularity
of non-stationary and, thus, also of stationary subdivision
schemes. We would like to distinguish between the following two different types of
convergence, both being investigated in the literature on stationary and non-stationary
subdivision
schemes. We denote by $\ell_\infty(\ZZ^s)$ the space of all scalar sequences
$\cb=\{c(\alpha), \ \alpha \in \ZZ^s\}$ indexed by $\ZZ^s$ and such that
$
 \|\cb\|_{\ell_\infty}:=\sup_{\alpha \in \ZZ^s} |c(\alpha)| < \infty.
$

\begin{definition} \label{def:convergenceTOCell}
A subdivision scheme $\{S_{\ab^{(k)}}, \  k\ge 1\}$ \emph{converges to
$C^\ell$ limit functions}, if
for any initial sequence $\cb \in \ell_\infty(\ZZ^s)$, there exists the
\emph{limit function} $g_{\cb} \in C^\ell(\RR^s)$ (which is
nonzero for at least one nonzero sequence $\cb$) such that
\begin{equation}\label{eq:subdivisionlimit}
  \lim_{k \to\infty}  \left\| g_{\cb}(M^{-k}\alpha)- S_{\ab^{(k)}} S_{\ab^{(k-1)}} \cdots S_{\ab^{(1)}} \cb(\alpha) \right\|_{\ell_\infty}= 0.
\end{equation}
\end{definition}

\noindent In the next Definition \ref{def:Cellconvergence} we consider a stronger type of
convergence, the so-called $C^\ell-$convergence of subdivision.
Note that both types of convergence coincide in the case $\ell=0$.
In Definition \ref{def:Cellconvergence} we make use of the concept
of a test function (see, for example \cite{DM97}). To
define this concept we need to recall the following properties of
the test functions.

\begin{definition} Let $\ell \ge 0$.
A compactly supported summable function $f$ satisfies
Strang-Fix conditions of order  $\ell+1$, if its Fourier transform
$\hat{f}$ satisfies
$$
 \hat{f}(0)=1, \quad D^\mu \hat{f}(\alpha)=0, \quad \alpha \in \ZZ^s \setminus \{0\}, \quad \mu \in \NN_0^s, \quad
 |\mu|<
\ell+1.
$$
\end{definition}

\begin{definition}
A compactly supported $f \in L_\infty(\RR^s)$ is
stable, if there exists $0<C_1 \le C_2 <\infty$ such that for all $\cb \in \ell_\infty(\ZZ^s)$,
$$
 C_1 \|\cb\|_{\ell_\infty} \le \big\| \sum_{\alpha \in \ZZ^s} c(\alpha)f(\cdot-\alpha)
 \big\|_\infty \le C_2\|\cb\|_{\ell_\infty}\,.
$$
\end{definition}

\smallskip \noindent By \cite[p.24]{CaravettaDahmenMicchelli}, this type of stability
is equivalent to $\ell_\infty$ linear independence of integer
shifts of $f$.  The function $f$ is called \emph{a test function}, if it is
sufficiently smooth, compactly supported, stable and satisfies
Strang-Fix conditions of order $\ell+1$. Possible examples of the
test functions $f$ are tensor-product box splines.

\begin{definition} \label{def:Cellconvergence}
A subdivision scheme $\{S_{\ab^{(k)}}, \  k\ge 1\}$ is
\emph{$C^\ell$-convergent}, if for any initial sequence $\cb \in \ell_\infty(\ZZ^s)$
there exists the
 limit function $g_\cb \in C^\ell(\RR^s)$ such that for any test function
$f \in C^\ell(\RR^s)$
\begin{equation}\label{eq:C^l_convergence}
  \lim_{k \to\infty} \Big \|  g_{\cb}(\cdot) - \sum_{\alpha \in \ZZ^s} S_{\ab^{(k)}}
 S_{\ab^{(k-1)}}  \cdots  S_{\ab^{(1)}}\bc(\alpha) f(M^k\cdot  - \alpha) \Big\|_{C^\ell}=0.
\end{equation}
\end{definition}

\noindent Note that, for $C^\ell-$convergence, it suffices to check
\eqref{eq:C^l_convergence} just for one test function $f$. In this paper, we also investigate the H\"older
regularity of subdivision limits.

\begin{definition} The \emph{H\"older regularity} of the $C^0-$convergent
scheme $\{S_{\ab^{(k)}}, \  k\ge 1\}$ is $\alpha=\ell+\zeta$, if
$\ell$ is the largest integer such that $g_{\cb} \in
C^\ell(\RR^s)$ and  $\zeta$ is the supremum of $\nu \in [0,1]$ such that
$$
 \max_{\mu \in \NN_0^s,  |\mu|=\ell} |D^\mu g_{\cb}(x)-D^\mu g_{\cb}(y)| \le
  |x-y|^\nu, \quad x,y \in \RR^s.
$$
We call $\alpha$ the \emph{H\"older
exponent} of the limit functions of $\{S_{\ab^{(k)}}, \ k\ge 1\}$.
\end{definition}

\smallskip \noindent Instead of studying the regularity of all limit
functions of a $C^0-$convergent subdivision scheme, one usually
restricts the analysis to the so-called \emph{basic limit
functions}, which are defined as follows. Let
$\bdelta:=\{\delta(\alpha)=\delta_{0,\alpha},\ \alpha\in \ZZ^s\}$,
where $\delta_{0,\alpha}$, $\alpha \in \ZZ^s$, is the Kronecker
delta symbol, i.e., $\delta_{0,0}=1$ and zero otherwise. The
compactly supported \emph{basic limit functions} $\phi_k$
generated from the initial sequence $\bdelta$ are given by
$$
 \phi_{k}:=\lim_{\ell \to\infty}  S_{\ab^{(k+\ell)}}  \cdots S_{\ab^{(k+1)}}
 S_{\ab^{(k)}} \bdelta, \quad k \ge 1.
$$
An interesting fact about convergent non-stationary schemes is
that the compactly supported \emph{basic limit functions} $\phi_k$
are mutually refinable, i.e., they satisfy the functional
equations
\begin{equation}\label{eq:ns-refinability}
    \phi_{k}=\sum_{\alpha \in \ZZ^s} \ra^{(k)}(\alpha) \phi_{k+1}(M\cdot-\alpha),\quad k
\ge 1,
\end{equation}
where $\{\ra^{(k)}(\alpha),\ \alpha\in \ZZ^s\}$ is the $k$-level
subdivision mask. We remark that, without loss of generality, to
study convergence and regularity of a non-stationary subdivision
scheme it suffices to study the continuity and the H\"older
regularity of the function $\phi_1$. This fact is shown in the next lemma (see also \cite{P3}).

\begin{lemma} \label{lem:alpha_k=alpha_1} Let $\alpha_{\phi_k}$ be the
H\"older exponent of $\phi_k$, $k \ge 1$. If $\ra^{(k)}(0) \not =0$,
then $\alpha_{\phi_k}=\alpha_{\phi_1}$ for all $k \ge 1$.
\end{lemma}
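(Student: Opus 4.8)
The plan is to exploit the mutual refinability relation \eqref{eq:ns-refinability}, which ties $\phi_k$ to $\phi_{k+1}$ through a \emph{single}, fixed mask $\mathbf{a}^{(k)}$ (with finitely many nonzero coefficients in $\{0,\ldots,N\}^s$), together with the hypothesis $\ra^{(k)}(0)\neq 0$. The key structural observation is that, because each mask has bounded support, the refinement equation \eqref{eq:ns-refinability} expresses $\phi_k$ as a finite linear combination of dilated-and-shifted copies of $\phi_{k+1}$, and — crucially — can be inverted: one can write $\phi_{k+1}$ as a (locally finite) combination of shifts of $\phi_k(M\cdot-\alpha)$. It therefore suffices to prove the two inequalities $\alpha_{\phi_k}\ge \alpha_{\phi_{k+1}}$ and $\alpha_{\phi_{k+1}}\ge\alpha_{\phi_k}$; iterating then gives $\alpha_{\phi_k}=\alpha_{\phi_1}$ for all $k$.

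For the first inequality, $\alpha_{\phi_k}\ge\alpha_{\phi_{k+1}}$: a finite sum $\sum_\alpha \ra^{(k)}(\alpha)\,\phi_{k+1}(M\cdot-\alpha)$ is at least as smooth (in the H\"older sense) as each summand, since H\"older continuity of a given order $\alpha=\ell+\zeta$ is preserved under finite linear combinations and under the affine change of variables $x\mapsto Mx-\alpha$ (the dilation by $M=mI$ only rescales the H\"older seminorm by a constant $m^{\alpha}$). Hence $\alpha_{\phi_k}\ge\alpha_{\phi_{k+1}}$ with no use of $\ra^{(k)}(0)\neq 0$. The reverse inequality is where the hypothesis enters. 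I would argue as follows: isolate the term $\alpha=0$ in \eqref{eq:ns-refinability} to get
$$
 \ra^{(k)}(0)\,\phi_{k+1}(M\cdot)=\phi_k-\sum_{\alpha\neq 0}\ra^{(k)}(\alpha)\,\phi_{k+1}(M\cdot-\alpha).
$$
On any fixed bounded region, the right-hand side involves $\phi_k$ together with shifts $\phi_{k+1}(M\cdot-\alpha)$ for $\alpha$ ranging over a \emph{proper} subset of the support lattice; since $\ra^{(k)}(0)\neq 0$, this is a genuine recursion that, unwound over the self-similar tiling generated by $M$, expresses $\phi_{k+1}$ locally as a convergent series of shifts of $\phi_k(M^j\cdot-\beta)$, $j\ge 0$, with geometrically controlled coefficients. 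Because each such term has H\"older exponent $\ge\alpha_{\phi_k}$ and the dilations only improve the seminorm, one concludes $\alpha_{\phi_{k+1}}\ge\alpha_{\phi_k}$.

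The main obstacle is making the "inversion'' of the refinement equation rigorous: showing that $\ra^{(k)}(0)\neq 0$ really does let one solve for $\phi_{k+1}$ in terms of $\phi_k$ with a \emph{uniformly convergent} local expansion, and that the resulting series does not degrade the H\"older regularity (one must check the geometric decay of the coefficients beats the $m^{\alpha}$-growth of the H\"older seminorm under each dilation, which holds precisely because $\alpha$ is finite and the recursion contracts by a factor $m^{-s}$ in support volume at each step). An alternative, possibly cleaner route is to pass to the Fourier domain: \eqref{eq:ns-refinability} reads $\hat\phi_k(\xi)=m^{-s}a_*^{(k)}(e^{-i\xi/m})\,\hat\phi_{k+1}(\xi/m)$, and the condition $\ra^{(k)}(0)\neq 0$ prevents the trigonometric polynomial $a_*^{(k)}$ from vanishing identically, so the decay rates of $\hat\phi_k$ and $\hat\phi_{k+1}$ at infinity — which govern the H\"older exponents under a Bernstein-type / Littlewood–Paley characterization — coincide up to the bounded factor. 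Either way, the finite support of the mask and the non-vanishing of $\ra^{(k)}(0)$ are exactly the two ingredients that force equality rather than a mere inequality.
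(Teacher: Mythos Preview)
Your easy inequality $\alpha_{\phi_k}\ge\alpha_{\phi_{k+1}}$ is fine and matches the paper. The reverse inequality is where your proposal has a real gap.

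After the substitution $y=Mx$, the refinement equation reads
\[
\sum_{\alpha}\ra^{(k)}(\alpha)\,\phi_{k+1}(y-\alpha)\;=\;\phi_k(M^{-1}y),
\]
a semi-discrete convolution equation $\mathbf{a}^{(k)}\ast\phi_{k+1}=\phi_k(M^{-1}\cdot)$ with a single fixed dilation. Your ``recursive unwinding'' is the Neumann series for the convolution inverse, and there is no reason for it to converge: nothing forces $\sum_{\alpha\ne 0}|\ra^{(k)}(\alpha)|<|\ra^{(k)}(0)|$. The appearance of $\phi_k(M^j\cdot-\beta)$ for $j\ge 0$ in your sketch is a symptom of this confusion---only one dilation is present, so no tower of $M^j$ should arise. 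Consequently the balance you propose to check (``geometric decay of coefficients versus $m^{\alpha}$ growth of the seminorm'') is not the right obstacle; the series simply need not converge at all.

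The paper bypasses this entirely. Since $\ra^{(k)}(0)\ne 0$, the symbol $a^{(k)}_*(z)$ is nonzero at $z=0$, so $1/a^{(k)}_*(z)$ has a formal Taylor expansion $\sum_{\beta\in\NN_0^s}b^{(k)}(\beta)z^\beta$ with $a^{(k)}_*\,b^{(k)}_*=1$ as formal power series. The Cauchy product identity then gives
\[
\sum_{\beta\in\NN_0^s}b^{(k)}(\beta)\,g(\cdot-\beta)\;=\;h,\qquad g:=\sum_\alpha \ra^{(k)}(\alpha)h(\cdot-\alpha),
\]
and the point is that \emph{no convergence analysis is needed}: since $g$ is compactly supported, at each $x$ only finitely many $\beta$ contribute, so locally $h$ is a finite linear combination of integer shifts of $g$, whence $\alpha_h\ge\alpha_g$. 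Applying this with $h=\phi_{k+1}$ and $g=\phi_k(M^{-1}\cdot)$ gives $\alpha_{\phi_{k+1}}\ge\alpha_{\phi_k}$ immediately.

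Your Fourier alternative does not work either: the hypothesis $\ra^{(k)}(0)\ne 0$ only prevents $a^{(k)}_*$ from vanishing at $z=0$, not elsewhere on the torus, so the factor $a^{(k)}_*(e^{-i\xi/m})$ is not bounded below and you cannot read off equal decay rates for $\widehat{\phi}_k$ and $\widehat{\phi}_{k+1}$. Even if you could, Fourier decay controls H\"older regularity only up to a gap of one.
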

\begin{proof} Let $k \ge 1$. Due to \eqref{eq:ns-refinability} and the compact support of
the mask $\mathbf{a}^{(k)}$,
we have $\alpha_{\phi_{k}} \ge \alpha_{\phi_{k+1}}$ and it
suffices to show that $\alpha_{\phi_{k+1}} \ge \alpha_{\phi_{k}}$.
To do that we show that, for any compactly supported function $h$,
the operator $g=\Phi h=\displaystyle \sum_{\alpha \in \ZZ^s}
a^{(k)}(\alpha)h(\cdot-\alpha)$ preserves the regularity of $h$.
Note that, due to $\ra^{(k)}(0) \not =0$, its symbol satisfies
$a^{(k)}_*(z) \not=0$ in the neighborhood of zero. Thus, the
meromorphic function $b^{(k)}_*(z) = 1/a^{(k)}_*(z)$, $z \in \CC^s
\setminus \{0\}$, has the Taylor expansion $b^{(k)}_*(z) =
\displaystyle \sum_{\beta \in \NN_0^s} b^{(k)}(\beta) z^\beta$ in
the neighborhood of zero. Then, due to
$a^{(k)}_*(z)b^{(k)}_*(z)=1$ and by the Cauchy product formula, we
get
$$
 \sum_{\beta \in \NN_0^s} b^{(k)}(\beta) g(\cdot-\beta)= \sum_{\beta \in \NN_0^s} \sum_{\alpha \in \{0,\ldots,N\}^s} b^{(k)}(\beta-\alpha)
 \ra^{(k)}(\alpha) h(\cdot-\beta)=b^{(k)}(0)
 \ra^{(k)}(0) h=h.
$$
Therefore, for the H\"older exponents of $g$ and $h$ we get $\alpha_h \ge \alpha_g$ and, thus, also
$\alpha_{\phi_{k+1}} \ge \alpha_{\phi_{k}}$.
\end{proof}

 \smallskip \noindent For our analysis, for the sequence of masks $\{\mathbf{a}^{(k)},\
k\ge 1\}$ supported on $\{0,\ldots,N\}^s$, we define the so-called
transition matrices $T^{(k)}_{\varepsilon}$, $k \ge 1$,
$\varepsilon \in E \simeq \{0, \ldots,m-1\}^s$, as follows. Accordingly to
\cite{ChenJiaRie}, we set
\begin{equation}\label{def:K}K:=\sum_{r=1}^\infty M^{-r}G,\quad \hbox{where}\quad
G:=\{-m,\ldots,N+1\}^s\,,
\end{equation}
and define the  $|K|\times |K|$ matrices
\begin{equation} \label{def:Tepsilon}
 T^{(k)}_{\varepsilon,\mathbf{a}^{(k)}}:=\left[ \ra^{(k)}
 (\varepsilon+M\alpha-\beta) \right]_{\alpha, \beta \in K}, \quad
 \varepsilon \in E.
\end{equation}
For simplicity we write $T^{(k)}_{\varepsilon}$ instead of
$T^{(k)}_{\varepsilon,\mathbf{a}^{(k)}}$. If the symbols
$\{a^{(k)}_*(z), \ k \ge 1\}$ satisfy sum rules of order $\ell+1$,
then the linear operators $T^{(k)}_{\varepsilon}$ have  common invariant
difference subspaces $V_j \subset \RR^{|K|}$, each of which is orthogonal to
$
 \{ [p(\alpha)]_{\alpha \in K} \in \RR^{|K|}\ : \ p \in \Pi_n\}$, $n=0, \ldots, j$.
The spaces $\Pi_j$ are the spaces of polynomials of total degree
less than or equal to $j=0, \ldots,\ell$. The existence of
$V_j$, $j=0,\ldots,  \ell$, is guaranteed for
$C^\ell-$convergent stationary subdivision schemes and is indeed
used for analysis of convergence and regularity in the stationary
setting. We refer the reader, for example, to the papers
\cite{Cabrelli, CaravettaDahmenMicchelli, Charina, ChenJiaRie,
J95, H03} for more details on the structure of $V_j$ and
for characterizations of regularity of stationary subdivision
schemes in terms of spectral properties of the matrices
$T_{\varepsilon, {\mathbf a}}|_{V_j}$, $\varepsilon \in E$.
Similarly to \eqref{def:Tepsilon}, these matrices are derived from
the stationary mask $\mathbf a$ as follows: define
\begin{equation} \label{def:Tepsilon_a}
 T_{\varepsilon, {\mathbf a}}:=\left[ \ra
 (\varepsilon+M\alpha-\beta) \right]_{\alpha, \beta \in K}, \quad
 \varepsilon \in E,
\end{equation}
and determine their restrictions $T_{\varepsilon, {\mathbf
a}}|_{V_j}$ to $V_j$. Since, in
general, in the non-stationary setting, the existence of such
invariant subspaces is not guaranteed by the regularity of the
limit functions, in this paper we study non-stationary schemes
$\{S_{\ab^{(k)}}, \ k\ge 1\}$ whose sequences of masks possess
sets $\cA$ of limit points, see Definition
\ref{def:set_of_limit_points}. This allows us, similarly to the
stationary setting, to establish a link between the regularity of
non-stationary schemes and the spectral properties of the
collection of square $|K| \times |K|$ matrices $T_{\varepsilon,
\mathbf{a}}$ restricted to $V_j$, $j=0,\ldots,\ell$. This
collection we denote by
$\cT_{\cA}|_{V_j}:=\{T_{\varepsilon,\mathbf{a}}|_{V_j},\
\varepsilon\in E,\ \mathbf{a}\in \cA\}.$

We conclude this section by recalling the notion of the joint
spectral radius of a set of square matrices, see
\cite{RotaStrang}.

\begin{definition}\label{def:JSR} The joint spectral radius (JSR) of a compact collection of square matrices ${\cM}$ is defined by
$
\displaystyle{ \rho({\cM}):=\lim_{n \rightarrow \infty} \max_{M_{1}, \ldots, M_n \in \cM} \left\|\prod_{j=1}^n
 M_{j} \right\|^{1/n}.}$
\end{definition}

\noindent Note that $\rho({\cM})$ is independent of the choice of the matrix norm $\| \cdot \|$.

\section{Convergence and H\"older regularity of non-stationary schemes}\label{sec:convergence_regularity}
In this section we derive sufficient conditions for convergence
and H\"older regularity of a wide class of non-stationary
subdivision schemes. In our proofs we make use of the special
structure of the matrices $T_{\varepsilon, \mathbf{a}}$,
$\mathbf{a} \in \cal A$ in \eqref{def:Tepsilon_a}, and the matrices $T_{\varepsilon}^{(k)}$ in \eqref{def:Tepsilon}
associated with a sequence of masks $\{\mathbf{a}^{(k)},\ \ k \ge
1\}$. This structure is ensured after a suitable change of basis,
which we discuss in \S \ref{subsec:transformation_basis}. In the rest of the
paper, we call such a basis \emph{ a transformation basis}. In
\S \ref{subsec:sum_rules_vs_approx_s_r}, we illustrate two important differences
between sum rules and approximate sum rules.
In \S \ref{subsec:convergence}, we show that a non-stationary
subdivision scheme $\{S_\mathbf{a}^{(k)},\ \ k \ge 1\}$ is
convergent if its symbols satisfy approximate sum rules of order
$1$ and, in addition, its sequence of masks $\{\mathbf{a}^{(k)},\
\ k \ge 1\}$  possesses the set of limit points $\cA$ such that
$\rho\left(\cT_{\cA}|_{V_0}\right)<1$. Note that, after an
appropriate adaptation of the notation in \S \ref{sec:intro}
and \S \ref{sec:background}, the proof of this convergence result is
also valid in the case of a general integer dilation matrix $M$,
the spectral radius of whose inverse satisfies $\rho(M^{-1})<1$.
In \S \ref{subsec:regularity}, we analyze the H\"older
regularity of the basic limit function $\phi_1$ under the
assumptions of approximate sum rules of order $\ell+1$, $\ell \ge
0$, and $\rho\left(\cT_{\cA}|_{V_\ell}\right)<m^{-\ell}$. In
\S \ref{subsec:rapidly}, we prove Theorem
\ref{th:stable_case} and show that under a certain stability
assumption the quantity $\rho\left(\cT_{\cA}|_{V_\ell}\right)$
determines the exact H\"older exponent of the subdivision limits.
This result allows us to prove in \S
 \ref{subsec:convergence_examples} a recent conjecture on regularity of
Daubechies wavelets stated in \cite{DKLR}. In \S
\ref{subsec:convergence_examples},
we also illustrate our results with several examples.

\noindent We start by stating important properties of the set $\cA$.

\begin{proposition}\label{prop:limit-sum-rules} Let $\ell\ge 0.$ Let $\cA$ be
the set of limit points of $\{\mathbf{a}^{(k)},\ \ k \ge 1\}$.
Assume that $\{a_*^{(k)}(z),\ \ k \ge 1\}$ satisfy approximate sum
rules of order $\ell+1$. Then, the symbols associated with the
masks in $\cA$ satisfy sum rules of order $\ell+1$.
\end{proposition}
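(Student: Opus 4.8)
The plan is to exploit that all masks are supported in the fixed finite set $\{0,\ldots,N\}^s$, so that convergence $\mathbf{a}^{(k_n)}\to\mathbf{a}$ along a subsequence is simply the coordinate-wise convergence of finitely many real numbers, and hence entails convergence of the associated symbols together with all their directional derivatives. Concretely, for each multi-index $\eta$ one has $D^\eta a_*^{(k)}(z)=\sum_{\alpha\in\{0,\ldots,N\}^s}\ra^{(k)}(\alpha)\,D^\eta z^\alpha$, a finite linear combination, with coefficients $\ra^{(k)}(\alpha)$, of the fixed functions $D^\eta z^\alpha$; therefore $D^\eta a_*^{(k_n)}(\epsilon)\to D^\eta a_*(\epsilon)$ for every fixed point $\epsilon\in\CC^s\setminus\{0\}$ and every $\eta\in\NN_0^s$, where $a_*$ is the symbol of the limit mask $\mathbf{a}$.

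First I would fix $\mathbf{a}\in\cA$ and a sequence $\{k_n,\ n\in\NN\}$ with $\mathbf{a}^{(k_n)}\to\mathbf{a}$. The normalization $a_*(1)=m^s$ is then immediate: by the definition of $\mu_k$ in \eqref{def:delta_k} and its summability (in particular $\mu_k\to 0$) we get $a_*^{(k_n)}(1)\to m^s$, while by the previous paragraph $a_*^{(k_n)}(1)\to a_*(1)$. For the derivative conditions, fix $\eta$ with $|\eta|\le\ell$ and $\epsilon\in\Xi\setminus\{1\}$, and note that $\Xi$ does not depend on $k$. From the definition of $\delta_k$ in \eqref{def:delta_k} we obtain the pointwise bound
$$
 |D^\eta a_*^{(k)}(\epsilon)|\ \le\ m^{k|\eta|}\,\delta_k\ \le\ m^{k\ell}\,\delta_k,
$$
the last inequality using $|\eta|\le\ell$ and $m\ge 2$. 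Since the approximate sum rules give $\sum_k m^{k\ell}\delta_k<\infty$, hence $m^{k\ell}\delta_k\to 0$, it follows that $D^\eta a_*^{(k_n)}(\epsilon)\to 0$; combining this with $D^\eta a_*^{(k_n)}(\epsilon)\to D^\eta a_*(\epsilon)$ yields $D^\eta a_*(\epsilon)=0$. As $\eta$ and $\epsilon$ range over the relevant sets, this says exactly that $\max_{|\eta|\le\ell}\max_{\epsilon\in\Xi\setminus\{1\}}|D^\eta a_*(\epsilon)|=0$, which together with $a_*(1)=m^s$ is precisely the assertion that $a_*(z)$ satisfies sum rules of order $\ell+1$.

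The argument is essentially just commuting limits with finitely many linear relations, so there is no real obstacle; the only point that needs a moment of care is that one must use the weighted summability $\sum_k m^{k\ell}\delta_k<\infty$ (equivalently $m^{k\ell}\delta_k\to 0$) rather than only $\delta_k\to 0$, because the unweighted quantity $|D^\eta a_*^{(k)}(\epsilon)|$ is controlled by $\delta_k$ only after multiplication by the growing factor $m^{k|\eta|}$, and this is exactly what the weight in Definition \ref{def:approx_sum_rules} compensates for. It is also worth remarking that the proof uses only the consequences $\mu_k\to 0$ and $m^{k\ell}\delta_k\to 0$ of the approximate sum rules, not the summability itself.
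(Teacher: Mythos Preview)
Your proof is correct and follows the same approach as the paper's one-line argument, which simply invokes Definition~\ref{def:set_of_limit_points} together with $\mu_k\to 0$ and $\delta_k\to 0$. Your version is in fact more precise: you correctly observe that for $|\eta|>0$ one needs $m^{k\ell}\delta_k\to 0$ (a consequence of the summability in Definition~\ref{def:approx_sum_rules}) rather than merely $\delta_k\to 0$, since the definition of $\delta_k$ carries the weight $m^{-k|\eta|}$ and the unweighted quantity $|D^\eta a_*^{(k)}(\epsilon)|$ could otherwise grow.
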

\begin{proof}
The proof follows from Definition \ref{def:set_of_limit_points}
and the fact that approximate sum rules in Definition
\ref{def:approx_sum_rules} imply that
$\lim_{k\rightarrow\infty}\delta_k=\lim_{k\rightarrow\infty}\mu_k=0$.
\end{proof}

\noindent Next, we would like to remark that the class of
the non-stationary schemes we analyze is not empty.

\begin{remark} In general, for an arbitrary compact set $\cA$ of masks, there exists a
non-stationary subdivision scheme $\{S_{\ab^{(k)}}, \ k \ge 1\}$
with the set of limit points $\cal A$. One possible way of
constructing $\{S_{\ab^{(k)}}, \ k \ge 1\}$ from a given set $\cA$
is presented in Example 2.
\end{remark}

\subsection{Transformation basis} \label{subsec:transformation_basis}
Let $\ell \ge 0$
and $\Pi_j$  be the spaces of polynomials of total degree less
than or equal to $j=0, \ldots,\ell$. If the symbols of the masks
${\mathbf{a}} \in \cA$ satisfy sum rules of order $\ell+1$, then
the corresponding stationary subdivision operators
$S_{\mathbf{a}}$ posses certain polynomial egensequences
$\{p_{\mathbf a }(\alpha), \ \alpha \in\ZZ^s\}$, $p_{\mathbf a}
\in \Pi_j$, $j =0, \ldots,\ell$. These polynomial sequences are
possibly different for different $\mathbf{a}$. For each $j=0,
\ldots, \ell$, the number $d_{j+1}$ of such eigensequences is
equal to the number of monomials $x^\eta$, $\eta \in \NN_0^s$, of
total degree $|\eta|=j$, see \cite{JetterPlonka, JiaJiang}. These
eigensequences, written in a vector form with ordering of the
entries as in \eqref{def:Tepsilon}, become common
left-eigenvectors of the corresponding matrices $T_{\varepsilon,
\mathbf{a}}$. There are at least two different ways of
constructing the so-called \emph{transformation basis} of
$\RR^{|K|}$. The approach in \cite{Cabrelli} makes use of the
eigensequences of the stationary subdivision operator. We cannot
do that as the eigensequences of $S_{\mathbf a}$, $\mathbf{a} \in
\cA$, possibly differ for different $\mathbf{a}$. For that reason,
we follow the approach in \cite{DL1992, P3, P2}, which makes use of the
elements in the common invariant subspaces $V_j$ of
$T_{\varepsilon, \mathbf{a}}$, $\mathbf{a} \in \cA$.  The
\emph{transformation basis} can be constructed as follows: Take
the first unit vector of $\RR^{|K|}$ and extend it to a basis of
$\RR^{|K|}$ by choosing appropriate $d_{j+2}$ vectors from $V_j$,
$j=0, \ldots, \ell-1$, and a complete basis of $V_\ell$. Note
that, any vector from $V_j$ is constructed to be orthogonal to the polynomial vectors
$[p_{\mathbf a}(\alpha)]_{\alpha \in K}$, $p_{\mathbf a} \in \Pi_i$, $i
=0, \ldots, j$. We choose $d_{j+2}$ vectors, say $v_{j,\eta}$,
from $V_j$ in such a way that they are orthogonal to all but
one vector  $[\alpha^\eta]_{\alpha \in K}$ for the corresponding $\eta
\in \NN_0^s$ with $|\eta|=j+1$.

\noindent  This choice of the transformation
basis guarantees that the transformed matrices $T_{\varepsilon,
\mathbf{a}} \in \RR^{|K| \times |K|}$, $\varepsilon \in E$,  are
block-lower triangular and of the form
\begin{equation} \label{eq:T_a_after_transformation}
\left(\begin{array}{ccccc} 1& & & & 0\\ & B_2 & & & \\
b_{1,\varepsilon,\mathbf{a}} & &\ddots& &\\
& b_{2,\varepsilon,\mathbf{a}} & &B_{\ell+1} & \\
&&& b_{\ell+1,\varepsilon,\mathbf{a}}& T_{\varepsilon,
\mathbf{a}}|_{V_\ell}
\end{array} \right),
\end{equation}
where the $d_j \times d_j$ matrices $B_{j}$ are diagonal with
diagonal entries equal to $m^{-j+1}$; the matrices
$b_{j,\varepsilon,\mathbf{a}}$ are of size
$\left(|K|-\displaystyle \sum_{i=1}^j d_i\right) \times d_j$.
Moreover,  if $\{a_*^{(k)}(z),\ k\ge1\}$ satisfy approximate sum
rules of order $\ell+1$, then, after the same change of basis, the
matrices $T_\varepsilon^{(k)} \in \RR^{|K| \times |K|}$,
$\varepsilon \in E$, $k \ge 1$,  are sums of a block-lower and a
block-upper triangular matrices
\begin{equation} \label{eq:T_after_transformation}
\widetilde{T}_\varepsilon^{(k)}+\Delta_\varepsilon^{(k)}:=\left(\begin{array}{ccccc} 1& & & & 0\\ & B_2 & & & \\
b_{1,\varepsilon}^{(k)} & &\ddots& &\\
& b_{2,\varepsilon}^{(k)} & &B_{\ell+1} & \\
&&& b_{\ell+1,\varepsilon}^{(k)}&Q_\varepsilon^{(k)}
\end{array} \right)+ \left(\begin{array}{ccccc} c_{1,\varepsilon}^{(k)}&  \ldots& & &\\
&  c_{2,\varepsilon}^{(k)}& \ldots & & \\
& & \ddots& &\\
&  & &  c_{\ell+1,\varepsilon}^{(k)} & \ldots \\
0&&& & {\rm O}
\end{array} \right),
\end{equation}
where $b_{j,\varepsilon}^{(k)}$  are of size
$\left(|K|-\displaystyle \sum_{i=1}^j d_i\right) \times d_j$;  the
matrices $c_{j,\varepsilon}^{(k)}$ of size $d_j \times
\left(|K|-\displaystyle \sum_{i=1}^{j-1} d_i\right)$; ${\rm O}$ is
the zero matrix of the same size as $Q^{(k)}_\varepsilon$.

\subsection{Sum rules versus approximate sum rules} \label{subsec:sum_rules_vs_approx_s_r}

The following example illustrates two important differences between sum
rules and approximate sum rules stated in Definitions
\ref{def:sumrules} and \ref{def:approx_sum_rules}, respectively.
Firstly, the re-scaling of all symbols
of a non-stationary subdivision masks to ensure that $\mu_k=0$, $k
\ge 1$, can change the properties of the non-stationary scheme if
the sequence $\{\mu_k,\ k\ge 1\}$ is not summable. In other words,
in contrast to the stationary case, the properties of
$a^{(k)}_*(1)$, $k \ge 1$, are crucial for convergence and
regularity analysis of non-stationary schemes. Secondly, even in
the univariate  case, the existence of the factor $(1+z)$ for all
non-stationary symbols $a^{(k)}_*(z)$ and the contractivity of the
corresponding difference schemes do not guarantee the convergence
of the associated non-stationary scheme, if $\{\mu_k, \  k \ge
1\}$ is not summable.

\begin{example}  \label{ex:no_approx_sum_rules}
Let $s=1,\ M=2$. It is well-known that the convergence of
$S_\mathbf{a}$ in the stationary case is equivalent to the fact
that the difference (or derived) scheme $S_\bb$ with the symbol
$b_*(z)$ such that
$$
 {a_*}(z)= \left(1+z \right){b_*}(z),
 \quad z \in \CC\setminus \{0\},
$$
is zero convergent, i.e, for every $v \in \RR^{|K|}$ orthogonal to
a constant vector and $\varepsilon_1, \ldots, \varepsilon_k \in
\{0,1\}$, the norm $\| T_{\eps_1,\mathbf{a}} \cdots T_{\eps_k,
\mathbf{a}} v\|$ goes to zero as $k$ goes to $\infty$. In the
non-stationary case, this characterization is no longer valid.
Consider the non-stationary scheme with the masks
\begin{equation} \label{es:5}
 \mathbf{a}^{(k)}:=\left(1+\frac{1}{k} \right) \mathbf{a}, \quad k \ge 1.
\end{equation}
Note that $\mu_k=\frac{2}{k}$, $\delta_k=0$ and, thus, we can conclude that the
non-stationary scheme $\{S_{\mathbf{a}^{(k)}}, \ k \ge 1\}$ does
not satisfy approximate sum rules. However, it is asymptotically
similar to $S_\mathbf{a}$ and the associated symbols satisfy
$$
   {a_*}^{(k)}(z):= \left(1+z \right)\left(1+\frac{1}{k} \right){b_*}(z),\quad k \ge 1,
 \quad z \in \CC\setminus \{0\}.
$$
We show next that the zero convergence of the associated difference schemes with symbols
$\left(1+\frac{1}{k} \right)b_*(z)$ does not
 imply the convergence of the corresponding non-stationary scheme. Indeed,
 for $\eps_j \in \{0,1\}$, we get
$$
 \|  T_{\eps_1}^{(1)} \cdots T_{\eps_k}^{(k)} v \|=\prod_{j=1}^k
 \left(1+\frac{1}{j}\right) \|  T_{\eps_1,\mathbf{a}} \cdots T_{\eps_k,\mathbf{a}} v  \|=
 (k+1)\| T_{\eps_1,\mathbf{a}} \cdots T_{\eps_k,\mathbf{a}} v \|.
$$
The convergence of $S_\mathbf{a}$ implies the existence of an
operator norm such that
$$
  \|  T_{\eps_1,\mathbf{a}} \cdots T_{\eps_k,\mathbf{a}} v\| \le C \gamma^k, \quad C>0, \quad \gamma <1.
$$
Therefore, the norm $\| T_{\eps_1}^{(1)} \cdots T_{\eps_k}^{(k)}
v \|$ goes to zero as $k$ goes to $\infty$, but the
corresponding non-stationary scheme is not convergent.  Otherwise, the Fourier-transform of
its basic limit function $\phi_1$ would satisfy
$\displaystyle{
 \hat{\phi}_1(\omega)=\prod_{j=1}^\infty {a}_*^{(j)}(e^{-i2 \pi 2^{-j}\omega}), \ \omega \in \RR,}
$
but
$$
 \hat{\phi}_1(0)= \lim_{k \rightarrow \infty}2 \prod_{j=1}^k \left(1+\frac{1}{j} \right){b}_*(1)=\lim_{k \rightarrow \infty}2(k+1){b}_*(1)=\infty.
$$
Note that, if we rescale the masks so that all $\mu_k=0$, $k \ge
1$, we get back the convergent stationary scheme $S_\mathbf{a}$.
\end{example}

\subsection{Convergence}\label{subsec:convergence}
We start by recalling that, in the stationary case, for
convergence analysis via the joint spectral radius approach one
uses the subspace
\begin{equation} \label{def:scalarV}
 V_0:=\{v \in \RR^{|K|}: \ \sum_{j=1}^{|K|} v_j=0\}\,,
\end{equation}
where $K$ is defined in (\ref{def:K}). This subspace also plays an
important role in the proof of the following Theorem
\ref{teo:convergence} that provides sufficient conditions for
convergence of a certain big class of non-stationary schemes. In
the case $M=mI$, $m \ge 2$, Theorem \ref{teo:convergence} is an
instance of Theorem \ref{th:main_Hoelder} with $\ell=0$. Note
though that in the proof of Theorem \ref{teo:convergence} we do
not assume that $M=mI$, $m \ge 2$, and, thus, we need a more
general definition of  approximate sum rules of order $1$.

\begin{definition} \label{def:approx_sum_rulesMorder1} Let $\Xi =\{ e^{2\pi  i  M^{-T} \xi} \ : \   \xi \ \hbox{is a coset representative of} \ \ZZ^s / M^T \ZZ^s\}\,$. The  symbols $\{a^{(k)}_*(z),\ \ k \ge 1\}$ satisfy
approximate sum rules of order $1$, if the sequences $\left\{\mu_k,\ k \ge 1 \right\}$ and $\left\{ \delta_k, \ k\ge 1 \right\}$ with
  \begin{equation}\label{deltak1mu}
 \mu_k:=\left|{a}_*^{(k)}(1)-|\det(M)|\right| \quad \hbox{and}\quad
 \delta_k:=\max_{\epsilon \in \Xi \setminus \{1\}} {| {a}_*^{(k)}(\epsilon)|}
  \end{equation}
are summable.
\end{definition}

In the case of a general dilation matrix, the set $E$ is the set
of coset representatives $E\simeq \ZZ^s / M \ZZ^s$.

\begin{theorem} \label{teo:convergence} Assume that the sequence of symbols
$\{ a_*^{(k)}(z), \ k \ge 1\}$ satisfies approximate sum rules of
order 1 and $\rho\left(\cT_{\cA}|_{V_0}\right)<1$, where $\cA$ is the
set of limit points of $\{ \mathbf{a}^{(k)}, \ k \ge 1\}$. Then the non-stationary scheme
$\{ S_{{\mathbf a}^{(k)}}, \ k \ge 1\}$ is $C^0-$convergent.
\end{theorem}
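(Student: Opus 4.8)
The plan is to reduce $C^0$-convergence to a contractivity estimate on the difference scheme, exactly as in the stationary theory, but keeping careful track of the level-dependent perturbations coming from the failure of exact sum rules. First I would recall the standard reformulation: $C^0$-convergence of $\{S_{\ab^{(k)}},\ k\ge1\}$ is equivalent to showing that, for every $v\in V_0$ and every sequence $\varepsilon_1,\dots,\varepsilon_k\in E$, the products $\|T_{\varepsilon_1}^{(1)}\cdots T_{\varepsilon_k}^{(k)} v\|$ tend to $0$ as $k\to\infty$, and in fact decay geometrically in a uniform way (together with summability of the low-order defects $\mu_k$, which here govern the normalization $a_*^{(k)}(1)$). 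So the heart of the matter is to bound these backward products of transition matrices restricted to $V_0$.

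Next I would invoke the transformation-basis machinery of \S\ref{subsec:transformation_basis} in the case $\ell=0$. After the change of basis, each $T_\varepsilon^{(k)}$ splits as $\widetilde T_\varepsilon^{(k)}+\Delta_\varepsilon^{(k)}$ in the form \eqref{eq:T_after_transformation}, where $\widetilde T_\varepsilon^{(k)}$ has the block-lower-triangular structure with leading $1$ and lower block $Q_\varepsilon^{(k)}$ (the restriction of $T_\varepsilon^{(k)}$ to a $V_0$-type space), while the perturbation $\Delta_\varepsilon^{(k)}$ has operator norm controlled by $\mu_k+\delta_k$ (for $\ell=0$ the relevant defect is just $\delta_k=\max_{\epsilon\in\Xi\setminus\{1\}}|a_*^{(k)}(\epsilon)|$ together with $\mu_k$). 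The key structural point is that the restriction $Q_\varepsilon^{(k)}$ acts on the invariant subspace and that the family $\{Q_\varepsilon^{(k)}\}$ is relatively compact, with all of its limit points lying in $\cT_\cA|_{V_0}$ by Definition \ref{def:set_of_limit_points} and Proposition \ref{prop:limit-sum-rules}. Since $\rho\bigl(\cT_\cA|_{V_0}\bigr)<1$, the Berger–Wang / joint-spectral-radius theory gives a norm on $V_0$ and constants $C>0$, $\gamma<1$ and an index $k_0$ such that $\|Q_{\varepsilon_1}^{(k)}\cdots Q_{\varepsilon_n}^{(k)}\|\le C\gamma^{n}$ uniformly for all $k\ge k_0$ and all choices of cosets; here one uses that for $k$ large the $Q_\varepsilon^{(k)}$ are arbitrarily close to the compact set $\cT_\cA|_{V_0}$ whose JSR is strictly below $1$, so a suitable finite product norm witnesses uniform contractivity.

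Having the uniform geometric bound on the products of the $Q_\varepsilon^{(k)}$, I would finish by a perturbation/telescoping argument: expand the backward product $\prod_{j}(\,\widetilde T_{\varepsilon_j}^{(j)}+\Delta_{\varepsilon_j}^{(j)})$ restricted to $V_0$, group terms by the positions where a $\Delta$-factor occurs, and estimate each resulting string using the contractivity of the pure-$\widetilde T$ (equivalently $Q$) blocks together with $\sum_k\|\Delta_{\varepsilon}^{(k)}\|\le\sum_k(\mu_k+\delta_k)<\infty$, which holds by the approximate-sum-rules hypothesis. A discrete Gronwall-type estimate then yields $\|T_{\varepsilon_1}^{(1)}\cdots T_{\varepsilon_k}^{(k)} v\|\le C'\gamma^{k}$ for a (possibly larger) constant $C'$ and the same $\gamma<1$, uniformly over $v\in V_0$ with $\|v\|\le1$ and over all coset sequences. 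This is precisely the contractivity needed to conclude that the non-stationary cascade generates a well-defined continuous basic limit function $\phi_1$ (hence $g_\cb$ for every $\cb\in\ell_\infty(\ZZ^s)$), completing the proof of $C^0$-convergence; nondegeneracy of the limit follows from the normalization $a_*^{(k)}(1)=|\det M|$ and $\sum_k\mu_k<\infty$, which guarantees $\widehat{\phi}_1(0)\ne0$ via the infinite product $\prod_{j\ge1}a_*^{(j)}(e^{2\pi i M^{-Tj}\cdot})$.

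The main obstacle I anticipate is the uniform-in-$k$ contractivity step: one must pass from the strict inequality $\rho(\cT_\cA|_{V_0})<1$ on the compact \emph{limit} set to a genuinely uniform geometric bound on finite products of the \emph{actual} matrices $Q_\varepsilon^{(k)}$ for all large $k$. This requires an extremal-norm (or finite-product-norm) argument showing that contractivity is stable under small perturbations of the matrix set, handling the finitely many ``early'' levels $k<k_0$ separately by boundedness, and making sure the summable perturbations $\Delta_\varepsilon^{(k)}$ do not destroy the geometric decay — the telescoping bookkeeping there is routine but must be done with care so that the accumulated constant stays finite.
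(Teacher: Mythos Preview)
Your approach is essentially the paper's: decompose $T^{(k)}_\eps = \widetilde{T}^{(k)}_\eps + \Delta^{(k)}_\eps$ as in \eqref{eq:T_after_transformation}, use the JSR hypothesis on $\cT_\cA|_{V_0}$ to get uniform contractivity of the $Q^{(k)}_\eps$ for $k\ge\bar k$, and expand the perturbed product to control the summable $\Delta$-terms. The paper differs only in framing and in one technical point you gloss over. For the framing: rather than reducing to decay of the products on $V_0$, the paper shows directly that the full products $T^{(1)}_{\eps_1}\cdots T^{(k)}_{\eps_k} v$ \emph{converge} for every $v\in\RR^{|K|}$, which is equivalent to cascade convergence by \cite{GoodmanLee}. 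For the technical point: in your expansion, once a $\Delta^{(j)}$-factor occurs the vector is thrown into the $e_1$-direction (outside $V_0$), and there the $\widetilde{T}$-blocks are \emph{not} contractive (they have eigenvalue $1$); you still need uniform boundedness of $\widetilde{T}^{(1)}_{\eps_1}\cdots\widetilde{T}^{(j-1)}_{\eps_{j-1}}$ on the whole space to close the estimate. The paper handles this by observing that the block-triangular family $\{\widetilde{T}^{(k)}_\eps:\eps\in E,\ k\ge\bar k\}$ has JSR equal to $1$ and is non-defective, hence admits an extremal norm with $\|\widetilde{T}^{(k)}_\eps\|\le 1$. Finally, your claimed rate $\|T^{(1)}_{\eps_1}\cdots T^{(k)}_{\eps_k} v\|\le C'\gamma^k$ is too optimistic: with only $\sum_k\delta_k<\infty$ the perturbation terms contribute $\sum_{j\le k}\delta_j\gamma^{k-j}$, which is $o(1)$ but not in general $O(\gamma^k)$.
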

\begin{proof}
By \cite{GoodmanLee}, the convergence of a non-stationary scheme
is equivalent to the convergence of the associated cascade
algorithm. Thus, to prove the convergence of the non-stationary
scheme $\{ S_{{\mathbf a}^{(k)}}, \ k \ge 1\}$, we show, for $v \in
 \RR^{|K|}$, that the
vector-sequence with the elements
$
 T^{(1)}_{\eps_1} \cdots T^{(k)}_{\eps_k} v, \quad  k \ge 1$,
converges as $k$ goes to infinity for every choice of $\eps_1,
\ldots  \eps_k, \in E$.

\noindent Due to Proposition \ref{prop:limit-sum-rules}, each
$\mathbf{a}\in \cA$ satisfies sum rules of order $1$. Therefore,
by \S \ref{subsec:transformation_basis}, the vector $\left( 1 \ 0
\ldots 0 \right)$ is a common left eigenvector of all matrices
$$
 T_{\eps, \mathbf{a}}= \left( \begin{array}{lcr}
1      & 0\cdots   0 & \\
     &         & \\
b_{\eps,\mathbf{a}} & \quad T_{\eps,\mathbf{a}}|_{V_0} \quad & \\
      &         & \\
\end{array}
\right), \quad \eps \in E, \quad \mathbf{a} \in \fpak.
$$
\noindent Due to the assumption of approximate sum rules of order
$1$, by \S \ref{subsec:transformation_basis}, we have
\begin{equation} \label{eq:decomp}
T^{(k)}_\eps = \widetilde{T}^{(k)}_\eps + \Delta^{(k)}_\eps, \quad
\eps \in E, \quad k \ge 1,
\end{equation}
with
$$
\widetilde{T}^{(k)}_\eps = \left( \begin{array}{lcr}
1      & 0\cdots   0 & \\
     &         & \\
b^{(k)}_{\eps} & \quad Q^{(k)}_{\eps} \quad & \\
      &         & \\
\end{array}
\right), \qquad \Delta^{(k)}_\eps = \left( \begin{array}{ccc}
&  c^{(k)}_\eps              &   \\
         & O                 &       \\
\end{array}
\right).
$$
Thus, the canonical row unit vector $\left( 1 \ 0 \ldots 0
\right)$ is a {\em quasi}-common left-eigenvector of the operators
$T^{(k)}_{\eps}$, $\eps \in E$, i.e.
$
\left( 1 \ 0 \ldots 0 \right)T^{(k)}_{\eps}=\left( 1 \ 0 \ldots 0
\right) + c^{(k)}_\eps$,
where the row vector $c^{(k)}_\eps$ vanishes as $k$ tends to
infinity and the corresponding sequence of norms $\{\|
\Delta^{(k)}_\eps \|, \ k \ge 1\}$ is summable.
Moreover, $b^{(k)}_\eps$ and $Q^{(k)}_\eps$ converge by
subsequences as $k$ goes to infinity to $b_{\eps, \mathbf{a}}$ and
$ T_{\eps,\mathbf{a}}|_{V_0}$ for some $\mathbf{a} \in \cA$,
respectively.

\smallskip \noindent By assumption
$\rho \left( \{ T_{\eps, \mathbf{a}}|_{V_0}, \ \eps \in E,
\mathbf{a} \in \fpak\} \right) < 1$. Thus, the existence of the
 operator norm of $\{ T_{\eps, \mathbf{a}}|_{V_0}, \ \eps
\in E, \mathbf{a} \in \fpak\}$ and the continuity of the joint
spectral radius imply that there exists $\bar k$ such that $\rho
\left( \{ Q^{(k)}_\eps, \eps \in E, k \ge \bar{k}\} \right) < 1$.
This implies that for all vectors $v \in \RR^{|K|}$, the product $
\widetilde{T}^{(1)}_{\eps_1} \cdots \widetilde{T}^{(k)}_{\eps_k}
v$ converges as $k$ goes to infinity for every choice of $\eps_1,\ldots,\eps_k
\in E$.

\noindent By well-known results on the joint spectral radius of
block triangular families of matrices (see e.g. \cite{BerWan}), we
obtain that $ \rho \left( \{ \widetilde{T}^{(k)}_\eps, \ \eps \in
E, k \ge \bar{k}\} \right) = 1.$ Moreover, the family of matrices
$\{ \widetilde{T}^{(k)}_\eps, \ \eps \in E, k \ge \bar{k}\}$ is
non-defective (see e.g. \cite{GuglielmiZennaro}), thus by
\cite{BerWan,RotaStrang}, there exists an  operator norm
$\|\cdot \|$ such that
\begin{equation} \label{eq:exnorm}
\| \widetilde{T}^{(k)}_\eps \| \le 1 \quad \mbox{for all} \quad
\eps \in E, \quad k \ge \bar{k}.
\end{equation}
Due to our assumption that the approximate sum rules of order $1$
are satisfied, we also have
\begin{equation} \label{eq:summ}
\| \Delta^{(k)}_\eps \| \le C \delta_{k} \quad \mbox{where} \quad
\sum\limits_{k=1}^{\infty} \delta_{k}< \infty,
\end{equation}
and $C$ is a constant which does not depend on $k$.

\smallskip \noindent Next, for $n,\ell \in \NN$, we observe that
$$
T^{(n)}_{\eps_n} \cdots T^{(n+\ell)}_{\eps_{n+\ell}} = \left(
\widetilde{T}^{(n)}_{\eps_n} + \Delta^{(n)}_{\eps_n} \right)
\cdots \left( \widetilde{T}^{(n+\ell)}_{\eps_{n+\ell}} +
\Delta^{(n+\ell)}_{\eps_{n+\ell}} \right) =
\widetilde{T}^{(n)}_{\eps_n} \cdots
\widetilde{T}^{(n+\ell)}_{\eps_{n+\ell}} + R_{n,\ell},
$$
where $R_{n,\ell}$ is obtained by expanding all the products. From
(\ref{eq:exnorm})-(\ref{eq:summ}) we get $\lim\limits_{n
\rightarrow \infty} R_{n,\infty} = {\rm O}$ implying convergence
of $\prod\limits_{j=1}^{k} T^{(j)}_{\eps_j} v $ as $k
\rightarrow \infty$. The reasoning for $\lim\limits_{n \rightarrow
\infty} R_{n,\infty} = {\rm O}$ is as follows
$$
 \|R_{n,\infty}\| \le \sum_{j=1}^\infty \left( \sum_{k=n}^{\infty}
 \delta_k
\right)^j=
  \sum_{j=0}^\infty \left( \sum_{k=n}^{\infty} \delta_k \right)^j -1 =
  \displaystyle \sum_{k=n}^{\infty} \delta_k \left(1-\displaystyle
\sum_{k=n}^{\infty} \delta_k \right)^{-1}.
$$
\end{proof}

\begin{corollary} \label{coro:teo_convergence}
Let $\{ S_{{\mathbf a}^{(k)}}, \ k \ge 1\}$ be a $C^0-$convergent
subdivision scheme with the set of limit points $\cA$ such that
$\rho(\cT_\cA|_{V_0})<1$. Then any other asymptotically similar
non-stationary scheme $\{ S_{{\mathbf b}^{(k)}}, \ k \ge 1\}$
satisfying approximate sum rules of order 1 is $C^0-$convergent.
\end{corollary}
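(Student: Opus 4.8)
The plan is to reduce the statement to a direct application of Theorem \ref{teo:convergence}. The key observation is that the hypotheses of the corollary package together exactly the two ingredients Theorem \ref{teo:convergence} needs: a control on the limit set (which is shared by asymptotically similar schemes by definition) and the approximate sum rules of order $1$ (which are assumed directly for $\{S_{\mathbf{b}^{(k)}}, \ k \ge 1\}$). So first I would unwind the definitions. By Definition \ref{def:asympt-similarity}, asymptotic similarity of $\{S_{\mathbf{a}^{(k)}}, \ k \ge 1\}$ and $\{S_{\mathbf{b}^{(k)}}, \ k \ge 1\}$ means precisely that their sets of limit points coincide; denote this common set by $\cA$. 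Hence the collection $\cT_{\cA}|_{V_0}$ built from the limit masks is literally the same for both schemes, and the hypothesis $\rho(\cT_{\cA}|_{V_0}) < 1$ transfers verbatim to $\{S_{\mathbf{b}^{(k)}}, \ k \ge 1\}$.

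Next I would check that the $C^0$-convergence of $\{S_{\mathbf{a}^{(k)}}, \ k \ge 1\}$ together with $\rho(\cT_{\cA}|_{V_0})<1$ is compatible with — indeed, we only need the latter. The real point is that $\{S_{\mathbf{b}^{(k)}}, \ k \ge 1\}$ already satisfies, by hypothesis, approximate sum rules of order $1$ in the sense of Definition \ref{def:approx_sum_rulesMorder1} (or Definition \ref{def:approx_sum_rules} with $\ell=0$ in the case $M=mI$). Combined with $\rho(\cT_{\cA}|_{V_0})<1$ for the shared limit set $\cA$, these are exactly the two hypotheses of Theorem \ref{teo:convergence}. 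Applying that theorem to the scheme $\{S_{\mathbf{b}^{(k)}}, \ k \ge 1\}$ yields its $C^0$-convergence, which is the assertion of the corollary. Note that the $C^0$-convergence of $\{S_{\mathbf{a}^{(k)}}, \ k \ge 1\}$ is not even strictly needed for the conclusion; it appears in the statement only because it is the natural way to certify $\rho(\cT_{\cA}|_{V_0})<1$ in practice (and to present the corollary as a "transfer of convergence" result).

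There is essentially no obstacle here — the corollary is a bookkeeping consequence of Theorem \ref{teo:convergence} once one recognizes that asymptotic similarity is defined as equality of limit sets, and that the joint spectral radius condition depends only on $\cA$. The one subtlety worth a sentence is to make sure the rescaling convention is respected: recall that throughout this part of the paper the symbols are normalized so that $a_*^{(k)}(1)=|\det M|$ (equivalently $\mu_k=0$), so when invoking approximate sum rules of order $1$ for $\{S_{\mathbf{b}^{(k)}}, \ k \ge 1\}$ one should either assume the $b^{(k)}_*$ are already so normalized, or note that the summability of $\{\mu_k\}$ permits such a rescaling without affecting the limit set $\cA$ or the convergence. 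With that caveat dispatched, the proof is a single line: apply Theorem \ref{teo:convergence}.
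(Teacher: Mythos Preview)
Your proposal is correct and matches the paper's approach: the corollary is stated without proof in the paper precisely because it is an immediate consequence of Theorem~\ref{teo:convergence}, once one unwinds Definition~\ref{def:asympt-similarity} to see that the limit set $\cA$ (and hence the condition $\rho(\cT_\cA|_{V_0})<1$) is shared by the two schemes. Your remark that the $C^0$-convergence of $\{S_{\mathbf{a}^{(k)}}\}$ is not actually used is also accurate.
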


\smallskip \noindent
We would like to remark that Theorem \ref{teo:convergence}
generalizes \cite[Theorem 10]{ContiDynManniMazure13} dealing with
the binary univariate case under the assumption that the
non-stationary scheme reproduces constants. Theorem
\ref{teo:convergence} is also a generalization of the
corresponding results in \cite{DynLevin, DynLevinYoon} that
require that stationary and non-stationary schemes are
asymptotically equivalent.

\subsection{$C^\ell-$convergence and H\"older regularity} \label{subsec:regularity}
In this section we prove Theorem \ref{th:main_Hoelder} stated in
the Introduction, i.e. we derive sufficient conditions for
H\"older regularity of non-stationary multivariate subdivision
schemes. Note that Theorem \ref{th:main_Hoelder} with $\ell=0$
also implies the convergence of the corresponding non-stationary
scheme. We, nevertheless,  gave the proof of $C^0$-convergence
separately in Theorem \ref{teo:convergence}, see \S
\ref{subsec:convergence}, to emphasize that it is not affected by
the choice of the dilation matrix $M$, whereas our proof of
$C^\ell$-convergence in this \S does depend on the choice of
$M=mI$, $m \ge 2$. The proof of Theorem \ref{th:main_Hoelder} is long, thus, in
\S \ref{subsec:aux}, we present several crucial auxiliary
results and then prove this theorem  in
\S \ref{subsec:proof_Hoelder}.

\subsubsection{Auxiliary results} \label{subsec:aux}

In the proof of Theorem \ref{th:main_Hoelder} we make use of the
summable sequence $\{\eta_k, \ k \ge 0\}$ which we define next.
Note first that under the assumption
$\rho(\cT_\cA|_{V_\ell})<m^{-\ell}$ of Theorem
\ref{th:main_Hoelder}, there exist $\gamma \in (0,m^{-\ell})$ and
$\bar{k}$ such that
\begin{equation} \label{eq:estimate_Qepsilonk}
 \| Q_\varepsilon^{(k)}\| < \gamma < m^{-\ell}, \quad \varepsilon \in E, \quad k \ge
 \bar{k},
\end{equation}
where $Q_\varepsilon^{(k)}$ are sub-matrices of the matrices
$\widetilde{T}_\varepsilon^{(k)}$ in
\eqref{eq:T_after_transformation}. This property  of
$Q_\varepsilon^{(k)}$ is guaranteed by \cite{BerWan,RotaStrang} and the convergence of
$\{Q_\varepsilon^{(k)}, \ k \ge 1\}$ to $T_{\epsilon,\mathbf{a}}|_{V_j}$. Furthermore, by
approximate sum rules of order $\ell+1$ (Definition
\ref{def:approx_sum_rules}), the sequence $\{\sigma_0:=1, \
\sigma_k=m^{k\ell}\delta_k, \  k \ge 1\}$ is summable and so is
the sequence $\{\eta_k, \ k \ge 0\}$ with
\begin{equation}\label{def:eta_k}
 \eta_k:=\sum_{j=0}^k \sigma_j q^{k-j}, \quad q:=m^{\ell} \gamma.
\end{equation}
Indeed, since $q < 1$, we have
\begin{equation} \label{eq:property_eta_k}
 \sum_{k=0}^{\infty}\eta_k \, = \, \sum_{j=0}^{\infty} \sigma_j
 \sum_{n=0}^{\infty}  q^n \, = \, \frac{1}{1-q}\sum_{j=0}^{\infty}
 \sigma_j \, < \, \infty.
\end{equation}

In the following Lemma \ref{l10} we estimate the asymptotic behavior of the
matrix products
\begin{equation}\label{Pik}
 P_k \: = \ R_1\, \cdots \, R_k,  \quad k \ge 1,
\end{equation}
where, for some non-negative real number $c$, the $(\ell+2) \times (\ell+2)$
matrices $R_j$ are defined by\small
\begin{equation} \label{def:Rj}  R_j:=\ \left(
\begin{array}{cccccc}
1 +\sigma_j m^{-\ell j} & \sigma_j m^{-\ell j} &  \sigma_j m^{-\ell j} & \ldots&
\sigma_j m^{-\ell j} & \sigma_j m^{-\ell j} \\
c & m^{-1} + \sigma_j m^{-(\ell-1)j}& \sigma_j m^{-(\ell-1)j}  & \ldots
&\sigma_j
m^{-(\ell-1)j} & \sigma_j m^{-(\ell-1)j}  \\
c & c &  {}  &  & \vdots & \vdots \\
\vdots & \vdots &  \ddots &  & \vdots & \vdots \\
c & \ldots  &  c &  &  \sigma_j m^{-j} & \sigma_j m^{-j} \\
c & \ldots  &  c & \ddots& m^{-\ell} + \sigma_j &  \sigma_j  \\
c & \ldots  &  c  & \ldots& c &  \gamma
\end{array}
\right).
\end{equation}
\normalsize In particular, in Lemma \ref{l10}, we show that any
norm of the $r-$th column of the matrix product~$P_k$ is bounded
uniformly over $k \ge 1$, i.e. that any norm of the column $P_k
e_r$, with $e_r$ being the standard $r$-th unit vector, is bounded
uniformly over $k \ge 1$.
 .

\begin{lemma}\label{l10} For every $k \in \mathbb{N}$
$$
\|P_ke_r\|_1=\left\{
               \begin{array}{ll}
                 {\cal O}(m^{-(r-1)k}), & r = 1, \ldots , \ell+1, \\
                 {\cal O}( m^{(\ell+1)}\, m^{-\ell k}), & r=\ell+2.
               \end{array}
             \right.
$$
\end{lemma}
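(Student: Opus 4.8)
The plan is to analyze the matrix products $P_k = R_1 \cdots R_k$ by tracking the growth of each column separately, exploiting the near-triangular structure of the $R_j$. Write each $R_j$ as $R_j = D_j + \Sigma_j + cN$, where $D_j = \mathrm{diag}(1, m^{-1}, \ldots, m^{-\ell}, \gamma)$ is the diagonal part, $\Sigma_j$ collects the $\sigma_j m^{-(\ell-i)j}$ off-diagonal perturbations in rows $1,\dots,\ell+1$, and $cN$ is the strictly lower-triangular part with all entries equal to $c$. The key observation is that row $i$ of $R_j$ (for $i=1,\dots,\ell+1$) has diagonal entry $m^{-(i-1)}$ plus a perturbation $\sigma_j m^{-(\ell-i+1)j}$, while the last row has entries $c$ off-diagonal and $\gamma$ on the diagonal. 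Since $q = m^\ell\gamma < 1$ and $\{\sigma_j\}$ is summable, these perturbations are controllable.

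First I would set up a bootstrap/induction on $k$: let $u_r^{(k)} := \|P_k e_r\|_1$ and derive a recursion $u_r^{(k)} \le$ (entry-wise bound on $R_{k+1}$ acting on the vector of $u_\cdot^{(k)}$). Because $R_{k+1}$ is lower-triangular up to the small $\Sigma$-perturbation, the $r$-th column of $P_{k+1} = P_k R_{k+1}$ is a combination of columns $r, r+1, \ldots, \ell+2$ of $P_k$ with coefficients from column $r$ of $R_{k+1}$: the diagonal coefficient $m^{-(r-1)}$ (or $\gamma$ for $r=\ell+2$), the perturbation coefficients $\sigma_{k+1} m^{-(\ell-i+1)(k+1)}$ from rows $i < r$, and the coefficient $c$ from rows $i > r$. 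The crucial point is that the "feed-down" terms with coefficient $c$ only come from columns with larger index $r' > r$, which by (reverse) induction on $r$ decay faster, so they do not spoil the bound $\mathcal{O}(m^{-(r-1)k})$; meanwhile the perturbation terms feeding column $r$ from the slowly-decaying columns $r' < r$ carry the extra factor $\sigma_{k+1} m^{-(\ell - \text{(row)})(k+1)}$ which, combined with the ${\cal O}(m^{-(r'-1)k})$ bound on those columns and the definition $q = m^\ell\gamma$, telescopes against the summability of $\{\sigma_j\}$ exactly as in \eqref{def:eta_k}--\eqref{eq:property_eta_k}.

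Concretely, I expect to prove by downward induction on $r$ (starting from $r = \ell+2$): for $r = \ell+2$ the column only receives contributions from itself with coefficient $\gamma$ and from the $c$-entries below the diagonal in that column — but column $\ell+2$ is the last, so only the $\gamma$ self-term and perturbation terms $\sigma_{k+1}$ from rows above; iterating gives $\|P_k e_{\ell+2}\|_1 = \mathcal{O}(\gamma^k) + \mathcal{O}(\sum_{j} \sigma_j \gamma^{\,\text{stuff}})$, and after absorbing the $m^{-(\ell-i+1)(k+1)}$ factors one gets the stated $\mathcal{O}(m^{\ell+1} m^{-\ell k})$ bound (the $m^{\ell+1}$ prefactor coming from summing the $\ell+1$ perturbation rows and the worst-case exponent). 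For $r \le \ell+1$, assuming the bounds for all $r' > r$, the recursion for $u_r^{(k)}$ reads
$$
 u_r^{(k+1)} \ \le \ m^{-(r-1)} u_r^{(k)} \ + \ \sigma_{k+1} m^{-(\ell-r+1)(k+1)} \!\!\sum_{i < r}\! u_i^{(k)} \ + \ c\!\!\sum_{i > r}\! u_i^{(k)},
$$
and the last two sums are controlled: the $c$-sum is $\mathcal{O}(m^{-rk})$ by the inductive hypothesis on larger $r$, which is even smaller than $m^{-(r-1)k}$; the $\sigma$-sum, using $u_i^{(k)} = \mathcal{O}(m^{-(i-1)k}) = \mathcal{O}(1)$ for $i=1$ up to $\mathcal{O}(m^{-(r-2)k})$, multiplied by $\sigma_{k+1} m^{-(\ell-r+1)(k+1)}$, telescopes against $m^{-(r-1)}$-geometric decay and summability of $\sigma$. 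Unrolling the scalar recursion $u_r^{(k+1)} \le m^{-(r-1)} u_r^{(k)} + \text{(summable after rescaling)}$ yields $u_r^{(k)} = \mathcal{O}(m^{-(r-1)k})$.

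The main obstacle will be bookkeeping the exponents carefully so that the perturbation contributions genuinely telescope: one must check that the product $\sigma_{k+1} m^{-(\ell-r+1)(k+1)}$ times the slower-decaying column bound $m^{-(i-1)k}$ times the accumulated diagonal decay $m^{-(r-1)(k-j)}$ sums to something $\mathcal{O}(m^{-(r-1)k})$, which is precisely where the relation $q = m^\ell \gamma < 1$ and the structure of $\eta_k$ in \eqref{def:eta_k} enter. I would isolate this as a small sub-lemma: if $0 < q < 1$ and $\sum_j \sigma_j < \infty$, then $\sum_{j=0}^k \sigma_j q^{k-j}$ is bounded uniformly in $k$ (which is \eqref{eq:property_eta_k}), and apply it at each level $r$ after normalizing the recursion by the appropriate power of $m$. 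Once the exponents are matched, the rest is the routine downward induction sketched above.
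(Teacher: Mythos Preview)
Your approach has a genuine circularity that prevents the downward induction on~$r$ from getting off the ground. The recursion you write for column~$r$,
\[
 u_r^{(k+1)} \ \le \ m^{-(r-1)} u_r^{(k)} \ + \ \underbrace{\sum_{i < r} \sigma_{k+1} m^{-(\ell-i+1)(k+1)} u_i^{(k)}}_{\text{needs bounds on columns } i<r} \ + \ \underbrace{c\sum_{i > r}  u_i^{(k)}}_{\text{needs bounds on columns } i>r}\, ,
\]
couples column~$r$ to \emph{both} sides: the strict lower triangle (the~$c$ entries) feeds in columns $i>r$, while the strict upper triangle (the~$\sigma$ perturbations) feeds in columns $i<r$. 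In your downward induction you correctly invoke the hypothesis for $i>r$ to control the $c$-sum, but you then also use $u_i^{(k)} = \mathcal{O}(m^{-(i-1)k})$ for $i<r$ to control the $\sigma$-sum --- and those are precisely the columns you have \emph{not} yet treated. The base case $r=\ell+2$ already exhibits the problem: its recursion needs the bounds $u_i^{(k)}=\mathcal{O}(m^{-(i-1)k})$ for $i=1,\dots,\ell+1$, which you only establish afterwards. An upward induction fails symmetrically at the $c$-sum. Nor can you start from a crude uniform bound $u_i^{(k)}=\mathcal{O}(1)$: the row sums of~$R_j$ are of order $(\ell+1)c + m^{-1}+\cdots$, which for generic~$c$ exceeds~$1$, so~$\|P_k\|$ may grow a priori.

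The paper resolves this by doing forward induction on~$k$ and proving \emph{all} $\ell+2$ column bounds simultaneously at each step, with a common multiplicative constant~$C_k$ satisfying
\[
 C_k \ = \ C_{k-1}\Bigl(1 + 2^{\,\ell+1}\sigma_k + c\,2^{\,\ell}2^{\,2-k} + c\,2^{\,2\ell+1}\eta_{k-1}\Bigr)\, .
\]
Because the parenthesis is $1+(\text{summable in }k)$, the infinite product $\prod_k$ converges and~$C_k$ stays bounded. The simultaneous treatment is exactly what breaks the circularity: at step~$k$ you already have all column bounds at step~$k-1$, so both the $c$-sum and the $\sigma$-sum are controlled at once. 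Your decomposition $R_j = D_j + \Sigma_j + cN$ and the sub-lemma on $\sum_j \sigma_j q^{k-j}$ are both correct and useful; what is missing is precisely this packaging into a single induction on~$k$ across all columns.
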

\begin{proof} For simplicity of presentation we consider the  case of  $M=2I$,
i.e $m=2$. Let $C_1$ be the smallest constant such that for each~$r = 1, \ldots
, \ell+1$,  the $r-$th column of $R_1$
 does not exceed~$C_1 2^{-(r-1)}$, and the $(\ell+2)$nd column does not exceed~$C_1 2 \eta_1$. We show by induction on~$k$ that the
 sum of the $r-$th column entries of $P_k$ does not exceed $C_k\, 2^{-(r-1)k}$, $r = 1, \ldots , \ell+1$, or $C_k\, 2^{(\ell+1)}\, 2^{-\ell k}\, \eta_k$, $r=\ell+2$, where
 $$
  C_k \ = \ C_1 \prod_{j=2}^{k} \Bigl(1 + 2^{\, \ell+1} \sigma_j \, + \, c \, 2^{\, \ell} 2^{\,2-j} + c 2^{\, 2\ell+1}\, \, \eta_{j-1}\Bigr)\, ,
 \qquad k \ge 2\,.
 $$
 Due to
 \begin{equation}\label{ck}
 C_{k} \ =  \ C_{k-1}\Bigl(1 + 2^{\, \ell+1} \sigma_k \, + \, c \, 2^{\, \ell} 2^{\,2-k} + c 2^{\, 2\ell+1}\, \, \eta_{k-1}\Bigr)\, ,
 \quad k \ge 2\, ,
 \end{equation}
 the sequence $\{C_k, \ k \ge 1\}$ increases and converges to
\begin{equation}\label{c}
 \tilde C \ := \ C_1\, \prod_{j=2}^{\infty} \, \Bigl(1 + 2^{\, \ell+1} \sigma_j \, + \, c \, 2^{\, \ell} 2^{\,2-j} +
 c 2^{\, 2\ell+1}\, \, \eta_{j-1}\Bigr).
\end{equation}
Since the sums $\sum\limits_{j=1}^{\infty} \sigma_j \, , \,
\sum\limits_{j=1}^{\infty} 2^{\, 2 - j} $, and $\sum\limits_{j=1}^{\infty}
\eta_{j-1}$ are all finite, the infinite product in \eqref{c} converges.
 By induction assumption, we have $\| P_{k-1} e_j\bigr\|_1 \le C_{k-1}2^{-(j-1)(k-1)}$, for $j=1, \ldots , \ell+1$,
 and $\| P_{k-1} e_{\ell+2}\bigr\|_1 \le C_{k-1}2^{\ell+1}\, 2^{-\ell (k-1)}\eta_{k-1}$.
 Since the $r-$th column of~$P_k$ is $P_ke_r$, where $e_r$ is the $r-$th basis vector of~$\mathbb{R}^{\ell+2}$,  we have
 $$
 \Bigl\| P_ke_r \Bigr\|_{1}  \ = \ \Bigl\| P_{k-1} R_k e_r \Bigr\|_{1} \ \le \
 \sum_{j=1}^{\ell+2} \, \Bigl\| P_{k-1} e_j\Bigr\|_1  \bigl( R_k\bigr)_{jr}\, .
 $$
 Thus,
 \begin{equation}\label{induc}
 \bigl\| P_{k}e_r \bigr\|_1 \ \le \  \sum_{j=1}^{\ell+2} \, \Bigl\| P_{k-1} e_j\Bigr\|_1  \bigl( R_k\bigr)_{jr}\, .
 \end{equation}
 Next we consider the following three cases.
\smallskip

 Case 1: $r=1$.  The first column of the matrix~$R_k$ is $(1+ \sigma_k 2^{-\ell k}, c, \ldots , c)^T$.  By induction assumption and due to $\displaystyle  \sum_{j=2}^{\ell+1}  2^{-(j-1)(k-1)}=0$ for $\ell=0$ and \eqref{ck}, the
 estimate \eqref{induc} yields
 \begin{eqnarray*}
 \bigl\| P_{k}e_1 \bigr\|_1 \ &\le& \ C_{k-1}\, \Bigl(  1 + \sigma_k 2^{-\ell k} + \,  c \sum_{j=2}^{\ell+1}  2^{-(j-1)(k-1)}\, + \,
 c\, 2^{\ell+1}\, 2^{-\ell (k-1)}\eta_{k-1} \Bigr) \\&\le&
  C_{k-1}\, \Bigl(  1 + \sigma_k 2^{-\ell k} + \,  2\, c\, 2^{-(k-1)}\, + \,
 c\, 2^{\ell+1}\, \eta_{k-1} \Bigr) \ \le \ C_k.
 \end{eqnarray*}

 Case 2: $ 2 \le r\le \ell+1$. The $r-$th column of the matrix~$R_k$ is
  $$
  (R_k)_r \ = \
  \bigl(\sigma_k2^{-\ell k}, \ldots , \sigma_k 2^{-(\ell-(r-2))\, k}, 2^{-(r-1)}+ \sigma_k 2^{-(\ell-(r-1))\, k}, c, \ldots , c \bigr)^T\, .
  $$
 By induction assumption the estimate in~(\ref{induc}) becomes
 \begin{eqnarray*}
 \bigl\| P_{k}e_r \bigr\|_1 \ &\le& \ C_{k-1}\, \Bigl( \sigma_k \sum_{j=1}^{r-1} 2^{-(\ell+1-j)k}2^{-(j-1)(k-1)} \, + \,
 \bigl( 2^{-(r-1)} + \sigma_k 2^{-(\ell+1 - r)k}\bigr) 2^{-(r-1)(k-1)} \\
 &+& \, c \sum_{j=r+1}^{\ell+1} 2^{-(j-1)(k-1)} \, + \, c \, 2^{\, \ell+1} 2^{-\ell (k-1)}\, \, \eta_{k-1}\, \Bigr)\\
 &\le&
C_{k-1}\, \Bigl( \sigma_k 2^{-\ell k}\sum_{j=1}^{r} 2^{j-1} \, + \, 2^{-(r-1)k}
\, + \, 2\, c  \, 2^{-r(k-1)}\, + \, c\, 2^{\, \ell+1} 2^{-(r-1)(k-1)} \, \,
\eta_{k-1}\, \Bigr)\\ &\le&
 C_{k-1}\, \Bigl( \sigma_k 2^{-(r-1)k} 2^{r}  \, + \, 2^{-(r-1)k} \, + \, c \, 2^{r+1}\, 2^{-(r-1)k} 2^{-k}\, + \,
c\, 2^{-(r-1)k} 2^{\, \ell+r}\, \, \eta_{k-1}\, \Bigr)\\ &\le&
 C_{k-1}\, 2^{-(r-1)k}\Bigl(1 + 2^{\, \ell+1} \eta_k \, + \, c \, 2^{\, \ell} 2^{\,2-k} + c\, 2^{\, 2\ell+1}\, \, \eta_{k-1}\Bigr) \le \ C_{k}\, 2^{-(r-1)k}\, .
 \end{eqnarray*}
 \smallskip

 Case 3: $ r = \ell+2$. The last column of the matrix~$R_k$ is
  $\bigl(\sigma_k2^{-\ell k}, \ldots , \sigma_k2^{-k}, \sigma_k , \gamma \bigr)^T$.
  Note that by definition of~$\eta_k$ we have~$\eta_k - \sigma_k = \eta_{k-1}$, and recall that
  $\gamma < 2^{-\ell}$.
 Then, by induction assumption, we get
 \begin{eqnarray*}
 \bigl\|P_k e_{\ell+2} \bigr\|_1 \, &\le& \, C_{k-1}\, \Bigl( \sigma_k \sum_{j=1}^{\ell+1} 2^{-(\ell+1-j)k}2^{-(j-1)(k-1)} \, + \,
2^{\, \ell+1}2^{-\ell (k-1)}\, \gamma \, \eta_{k-1}
 \Bigr)\\ &=&
 C_{k-1}\, \Bigl( \sigma_k 2^{-\ell k}\sum_{j=1}^{\ell+1} 2^{j-1} \, + \, 2^{\, \ell+1} 2^{-\ell k} \, \bigl(\eta_k - \sigma_k \bigr)\, \Bigr)\\  &\le&
C_{k-1}\, \Bigl( \sigma_k 2^{-\ell k} 2^{\, \ell +1}\, + \, 2^{\, \ell +1} 2^{-\ell k} \, \eta_k\, - \, 2^{\, \ell+1}\, 2^{-\ell k} \sigma_k \Bigr)\\
 &=& \, C_{k-1} 2^{\, \ell +1} 2^{-\ell k} \, \eta_k\, \le \,
C_k 2^{\, \ell+1} 2^{-\ell k} \, \eta_k\, .
\end{eqnarray*}
\end{proof}

\noindent The estimates in Lemma \ref{l10} allow us to estimate the norms of the columns
of the matrix products $T_{\varepsilon_1}^{(1)} \cdots T_{\varepsilon_k}^{(k)}$,
$\varepsilon_1, \ldots, \varepsilon_k \in E$.

\begin{lemma} \label{lemma:estimate_products_of_T}
Let $\varepsilon_1, \ldots, \varepsilon_k \in E$, $\ell\ge 0$. Assume that the
symbols of $\{S_{{\mathbf a}^{(k)}}, \ k \ge 1\}$ satisfy approximate sum rules
of order $\ell+1$ and $\rho(\cT_\cA|_{V_\ell})< m^{-\ell}$. Then the norms of
the columns of $T_{\varepsilon_1}^{(1)} \cdots T_{\varepsilon_k}^{(k)}$ with
indices $\displaystyle 1+\sum_{j=1}^{r-1} d_j, \ldots, \sum_{j=1}^{r} d_j$ are
equal to ${\cal O}(m^{-(r-1)k})$ for $r=1, \ldots,  \ell+1$. The norms of the
other columns of this matrix product are equal to ${\cal O}(m^{-\ell k}
\eta_k)$.
\end{lemma}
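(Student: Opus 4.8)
The plan is to reduce the claim to the scalar model of Lemma~\ref{l10} by keeping track only of the norms of the coordinate blocks of the iterates. Working in the transformation basis of \S\ref{subsec:transformation_basis}, split $\RR^{|K|}$ into $\ell+2$ blocks of sizes $d_1=1,d_2,\dots,d_{\ell+1}$ and a final block of size $|K|-\sum_{j=1}^{\ell+1}d_j$ spanning $V_\ell$. In this basis $T_\varepsilon^{(k)}=\widetilde T_\varepsilon^{(k)}+\Delta_\varepsilon^{(k)}$ has the form \eqref{eq:T_after_transformation}: the diagonal blocks of $\widetilde T_\varepsilon^{(k)}$ are $1,B_2,\dots,B_{\ell+1},Q_\varepsilon^{(k)}$ with $B_j=m^{-j+1}I_{d_j}$, the subdiagonal blocks $b_{j,\varepsilon}^{(k)}$ are bounded uniformly in $k$ and $\varepsilon$ (they converge along subsequences, since $\cA$ is a nonempty compact set), and $\|Q_\varepsilon^{(k)}\|<\gamma<m^{-\ell}$ for all $k\ge\bar k$ by \eqref{eq:estimate_Qepsilonk}; finally, the approximate sum rules of order $\ell+1$ force the $j$-th row-block $c_{j,\varepsilon}^{(k)}$ of $\Delta_\varepsilon^{(k)}$ (which, by the bookkeeping of \S\ref{subsec:transformation_basis}, is governed by the values $D^\eta a_*^{(k)}(\epsilon)$ with $|\eta|=j-1$) to satisfy $\|c_{j,\varepsilon}^{(k)}\|\le C'\,m^{(j-1)k}\delta_k=C'\sigma_k\,m^{-(\ell-j+1)k}$, with $\sigma_k=m^{k\ell}\delta_k$ and $C'$ independent of $k,\varepsilon$.

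Next I would introduce, for $v\in\RR^{|K|}$, the vector of block-norms $\nu(v)\in\RR_{\ge0}^{\ell+2}$ whose $r$-th entry is the Euclidean norm of the $r$-th block of $v$. The heart of the argument is the componentwise domination
$$
\nu\!\left(T_\varepsilon^{(k)}v\right)\ \le\ R_k\,\nu(v),\qquad \varepsilon\in E,\quad k\ge\bar k,
$$
where $R_k$ is exactly the matrix \eqref{def:Rj} with $c$ chosen large enough to bound all the $\|b_{j,\varepsilon}^{(k)}\|$ and the constant $C'$ above. This is checked block by block: in row-block $r\le\ell+1$ the diagonal entry collects $\|B_r\|=m^{-(r-1)}$ from $\widetilde T_\varepsilon^{(k)}$ and $\|c_{r,\varepsilon}^{(k)}\|\le\sigma_k m^{-(\ell-r+1)k}$ from $\Delta_\varepsilon^{(k)}$, each entry strictly to the right of the diagonal in that row-block is again bounded by $\|c_{r,\varepsilon}^{(k)}\|$, and each entry below the diagonal by $c$; in row-block $\ell+2$ the diagonal entry is $\|Q_\varepsilon^{(k)}\|\le\gamma$ and the entries to the left are bounded by $c$. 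Iterating this inequality, using that $v\mapsto\nu(v)$ is submultiplicative along a product of nonnegative block-operator matrices, and observing that $\nu(e_i)=e_r$ (the $r$-th standard basis vector of $\RR^{\ell+2}$) whenever the column index $i$ lies in block $r$, one obtains
$$
\nu\!\left(T_{\varepsilon_{\bar k}}^{(\bar k)}\cdots T_{\varepsilon_k}^{(k)}e_i\right)\ \le\ R_{\bar k}R_{\bar k+1}\cdots R_k\,e_r .
$$

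Finally, I would invoke Lemma~\ref{l10}: its induction is insensitive to the starting index, so run verbatim from the $\bar k$-th factor it gives $\|R_{\bar k}\cdots R_k\,e_r\|_1={\cal O}(m^{-(r-1)k})$ for $r=1,\dots,\ell+1$, and, from the finer estimate established inside the proof of Lemma~\ref{l10}, $\|R_{\bar k}\cdots R_k\,e_{\ell+2}\|_1={\cal O}(m^{-\ell k}\eta_k)$; here one uses that $\sum_{i=\bar k}^{k}\sigma_iq^{k-i}\le\eta_k$ and that $\bar k$ is a fixed constant, so the extra powers $m^{(r-1)\bar k}$ and $m^{\ell\bar k}$ are harmless. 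Prepending the fixed finite product $T_{\varepsilon_1}^{(1)}\cdots T_{\varepsilon_{\bar k-1}}^{(\bar k-1)}$, whose norm is at most $(\sup_{j,\varepsilon}\|T_\varepsilon^{(j)}\|)^{\bar k-1}$, multiplies all bounds by a fixed constant, yielding the asserted estimates for every column of $T_{\varepsilon_1}^{(1)}\cdots T_{\varepsilon_k}^{(k)}$, uniformly in $\varepsilon_1,\dots,\varepsilon_k\in E$. The main obstacle is the middle step, namely setting up the domination $\nu(T_\varepsilon^{(k)}v)\le R_k\nu(v)$ with precisely the matrix \eqref{def:Rj}: one must match every block in the decomposition \eqref{eq:T_after_transformation} — in particular the $\delta_k$-sized upper blocks produced by the approximate sum rules — against the corresponding entry of $R_k$, and justify that the subdiagonal blocks $b_{j,\varepsilon}^{(k)}$ are bounded uniformly in $k$.
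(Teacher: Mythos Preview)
Your proposal is correct and follows essentially the same route as the paper: block-decompose $\RR^{|K|}$ in the transformation basis, define a vector of block-norms, show the one-step domination $\nu(T_\varepsilon^{(k)}v)\le R_k\,\nu(v)$ with $R_k$ exactly as in \eqref{def:Rj}, iterate, and invoke Lemma~\ref{l10}. The paper does the same, only it packages the block-norms into a single scalar norm $\|v\|=\sum_{i\le\ell+1}\|v^{[i]}\|_\infty+\|v^{[\ell+2]}\|_{ext}$ and writes the domination as $\|T_\varepsilon^{(k)}v\|\le\|R_k\tilde v\|$; your componentwise formulation is equivalent and arguably cleaner.

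One technical point to fix: you take the Euclidean norm on every block, but the bound $\|Q_\varepsilon^{(k)}\|<\gamma<m^{-\ell}$ in \eqref{eq:estimate_Qepsilonk} is only guaranteed for an \emph{extremal} operator norm $\|\cdot\|_{ext}$ adapted to the family $\{T_{\varepsilon,\mathbf a}|_{V_\ell}\}$, not for the spectral norm. So on the last block you must use $\|\cdot\|_{ext}$ (as the paper does); otherwise the $(\ell+2,\ell+2)$ entry of your dominating matrix is not $\gamma$ and the last-column estimate collapses. With that adjustment your argument goes through. Your explicit treatment of the first $\bar k-1$ factors is in fact more careful than the paper, which silently absorbs them into the constants.
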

\begin{proof}  Let $\eps \in E$. Under the assumptions of Theorem
\ref{th:main_Hoelder}, the matrices
$\widetilde{T}_\varepsilon^{(k)}$ in
\eqref{eq:T_after_transformation} have the following properties:
the matrix sequences $\{b_{j,\varepsilon}^{(k)}, \ k \ge 1\}$ and
$\{Q_{\varepsilon}^{(k)}, \ k \ge 1\}$ converge by subsequences as
$k$ goes to $\infty$, respectively, to
$b_{j,\varepsilon,\mathbf{a}}$ and
$T_{\varepsilon,\mathbf{a}}|_{V_\ell}$ for some $\mathbf{a} \in
\cal A$; there exists $c>0$ such that all the norms
$\|b_{j,\varepsilon,\mathbf{a}}\|_\infty \le c < \infty $; the
estimate in \eqref{eq:estimate_Qepsilonk} holds for $0 < \gamma <
m^{-\ell}$ and for some matrix norm $\|\cdot\|_{ext}$.
Furthermore, approximate sum rules of order $\ell+1$ and the
definition of $\sigma_k$ imply that the entries of the matrices
$c_{j,\varepsilon}^{(k)}$, $j=1, \ldots, \ell+1$, are bounded by
$\sigma_k m^{-(\ell+1-j)k}$. Next, let $L_0=0$ and $L_i=\displaystyle \sum_{j=1}^{i} d_j$, $i=1,
\ldots, \ell+1$, with $d_j$ defined in \S
\ref{subsec:transformation_basis}. Set $L=L_{\ell+1}$ and write a
vector $v=(v_1, \ldots, v_{|K|})^T \in \RR^{|K|}$ as
$$
 v=\left( v^{[1]}, v^{[2]}, \ldots,  v^{[\ell+1]}, v^{[\ell+2]} \right)
$$
with
$$
v^{[i]}:=(v_{L_{i-1}+1}, \ldots, v_{L_{i}})^T,\ i=1,\ldots,\ell+1,\quad v^{[\ell+2]}:=(v_{L+1},\ldots,  v_{|K|})^T.
$$
Consider the vector norm $\displaystyle{
 \|v\|:=\sum_{i=1}^{\ell+1} \| v^{[i]}\|_\infty+ \|v^{[\ell+2]}\|_{ext}, \quad v \in \RR^{|K|}.}
$
Then
$$
 \|T_\varepsilon^{(k)} v\| \le \|R_k \tilde{v}\|, \quad
 \tilde{v}=(\| v^{[1]}\|_\infty, \ldots, \| v^{[\ell+1]}\|_\infty,
 \|v^{[\ell+2]}\|_{ext}) \in \RR^{\ell+2},
$$
where $R_k$ is given in \eqref{def:Rj}. Analogously, we get
$$
 \|T^{(1)}_{\varepsilon_1} \cdots T^{(k)}_{\varepsilon_k} v\| \le \|R_1 \cdots R_k \tilde{v}\|.
$$
The claim follows by Lemma \ref{l10}.
\end{proof}

\subsubsection{Proof of Theorem \ref{th:main_Hoelder}} \label{subsec:proof_Hoelder}

The proof of  Theorem \ref{th:main_Hoelder} is long, so we split it into two
parts: Proposition~\ref{prop:convergence_Cl} and
Proposition~\ref{prop:Hoelder_proof}. In the first part of the proof, given in
Proposition \ref{prop:convergence_Cl}, we show that the assumptions of
Theorem~\ref{th:main_Hoelder} are indeed sufficient for the $C^\ell-$convergence
of non-stationary schemes. In particular, we let $f \in C^\ell(\RR^s)$ be
compactly supported, stable and refinable with respect to the dilation matrix
$M=mI$ and the mask $\mathbf{e} \in \ell_0(\ZZ^s)$. Then, for every $j=0,
\ldots,\ell$ and for every $\nu \in \NN_0^s$, $|\nu|=j$, we consider the
sequence $\{D^\nu f_k, \ k \ge 1\}$, where for $f_k:= \cT^{(1)} \ldots
 \cT^{(k)}f$
\begin{equation} \label{def:D_nu_f_k}
 D^\nu f_k=m^{jk} \cT^{(1)} \ldots \cT^{(k)} D^\nu f, \quad \cT^{(k)} f=\displaystyle \sum_{\alpha \in \ZZ^s}
\ra^{(k)}(\alpha) f(M\cdot-\alpha),
\end{equation}
i.e. $\cT^{(k)}$ is the transition operator associated with the mask ${\bf
a}^{(k)}$, and show that $\{D^\nu f_k, \ k \ge 1\}$ converges uniformly to the
$\nu-th$ partial derivative of $\phi_1$.

\begin{proposition} \label{prop:convergence_Cl} Let $\ell \ge 0$.  Assume that
the symbols of $\{ S_{{\mathbf a}^{(k)}}, \ k \ge 1\}$ satisfy approximate sum
rules of order $\ell+1$ and $\rho(\cT_\cA|_{V_\ell})<m^{-\ell}$. Then, for every
$j=0, \ldots,\ell$ and for every $\nu \in \NN_0^s$, $|\nu|=j$, the sequence
$\{D^\nu f_k, \ k \ge 1\}$ in \eqref{def:D_nu_f_k} converges uniformly to the
$\nu-th$ partial derivative of $\phi_1$. Moreover, there exists a constant $C>0$
independent of $k$ such that for $\eta_n$ as in (\ref{def:eta_k}) we have
\begin{equation}\label{eq:der}
 \|D^\nu f_k-D^\nu \phi_1\|_{\infty} \le C \sum_{n=k}^\infty \eta_n, \quad |\nu|=\ell, \quad k \ge 1.
\end{equation}
\end{proposition}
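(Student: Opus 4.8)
The plan is to build $\phi_1$ as the limit of the sequence $\{f_k\}$ defined in \eqref{def:D_nu_f_k} and to control all derivatives up to order $\ell$ simultaneously by exploiting the transformation basis from \S \ref{subsec:transformation_basis} and the column estimates of Lemma \ref{lemma:estimate_products_of_T}. First I would fix the test function $f\in C^\ell(\RR^s)$, compactly supported, stable, refinable with mask $\mathbf e$, and satisfying Strang--Fix conditions of order $\ell+1$ (e.g.\ a tensor-product box spline). The key observation is that, because $f$ reproduces polynomials up to degree $\ell$, the coefficient vectors representing $D^\nu f$ in a local window lie (after the change of basis) in the difference subspaces $V_j$: more precisely, for $|\nu|=j$ the relevant finite-dimensional coordinate vector of $m^{jk}\cT^{(1)}\cdots\cT^{(k)}D^\nu f$ on a dyadic block decomposes into components living in the blocks indexed by $r\ge j+1$ of the triangular form \eqref{eq:T_after_transformation}. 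Thus the growth of $\|D^\nu f_k\|_\infty$ is governed by the norms of exactly those columns of $T^{(1)}_{\varepsilon_1}\cdots T^{(k)}_{\varepsilon_k}$ with indices $\ge 1+\sum_{i=1}^{j}d_i$, which by Lemma \ref{lemma:estimate_products_of_T} are $\mathcal O(m^{-jk})$ for the block of index $j+1$ (cancelling the prefactor $m^{jk}$) and smaller for the higher blocks, plus the tail block of size $\mathcal O(m^{-\ell k}\eta_k)$ — harmless since $\{\eta_k\}$ is summable.

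Next I would establish that $\{f_k\}$ is Cauchy in $C^\ell$. For this I compare $f_{k+1}$ with $f_k$: by refinability of $f$ with mask $\mathbf e$ and of the scheme, $f_{k+1}-f_k = \cT^{(1)}\cdots\cT^{(k)}\bigl(\cT^{(k+1)}f - f\bigr)$, and $\cT^{(k+1)}f-f$ has coefficient vector supported in the difference subspace $V_0$ up to a defect of size $\mathcal O(\delta_{k+1})$ coming from the fact that $\mathbf a^{(k+1)}$ satisfies only \emph{approximate} sum rules (this is where $\mu_k=0$ after rescaling and the $\delta_k$ enter). Differentiating $\ell$ times multiplies by $m^{\ell k}$; the resulting vector, expressed in the transformation basis, has its $V_\ell$-component propagated by products of the $Q^{(k)}_\varepsilon$ (contractive with factor $\le\gamma$) and its defect components propagated with the $\sigma_k=m^{\ell k}\delta_k$ weights — exactly the structure encoded by the matrices $R_j$ in \eqref{def:Rj} and controlled by $P_k$ in Lemma \ref{l10}. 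Summing the telescoping differences yields $\|D^\nu f_{k}-D^\nu f_{k+p}\|_\infty \lesssim \sum_{n=k}^\infty \eta_n$, uniformly in $p$, which by \eqref{eq:property_eta_k} tends to $0$; hence $\{D^\nu f_k\}$ converges uniformly for every $|\nu|\le\ell$, the limit of $f_k$ is a $C^\ell$ function, and passing $p\to\infty$ gives the quantitative bound \eqref{eq:der}.

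Finally I would identify the limit with $\phi_1$. Since $f$ has mean $1$ and satisfies Strang--Fix conditions, the standard cascade-algorithm argument (as in \cite{GoodmanLee}, already invoked in the proof of Theorem \ref{teo:convergence}) shows that $\cT^{(1)}\cdots\cT^{(k)}f$ and $\cT^{(1)}\cdots\cT^{(k)}\bdelta$, suitably interpreted, have the same limit, namely the basic limit function $\phi_1$; the Strang--Fix order only needs to exceed the regularity, which is guaranteed here by $\ell$. Uniform convergence of the $\ell$-th derivatives then transfers to $\phi_1\in C^\ell(\RR^s)$ and gives \eqref{eq:der}. The main obstacle I anticipate is the bookkeeping in the first step: carefully setting up the local coordinate/windowing so that "$D^\nu f$ lives in block $r=|\nu|+1$ modulo a defect" is literally true, i.e.\ matching the orthogonality conditions defining the $v_{j,\eta}\in V_j$ with the polynomial-annihilation properties of $D^\nu f$, and then checking that the defect terms really are of size $\mathcal O(\sigma_k m^{-(\ell+1-j)k})$ so that Lemma \ref{lemma:estimate_products_of_T} applies verbatim. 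Everything after that is a summability argument already prepared by \eqref{eq:property_eta_k}.
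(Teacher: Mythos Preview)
Your approach is correct and essentially coincides with the paper's proof: both use the Strang--Fix conditions on $f$ (via Poisson summation) to show that the local coordinate vectors of $D^\nu f$ have vanishing components in the first $\sum_{i=1}^{|\nu|}d_i$ slots of the transformation basis, write $D^\nu f_{k+1}-D^\nu f_k = m^{|\nu|(k+1)}\,\cT^{(1)}\cdots\cT^{(k)}\bigl(\cT^{(k+1)}-\cT\bigr)D^\nu f$ using refinability of $f$ with mask~$\mathbf e$, and then bound the telescoping terms by $\mathcal O(\eta_k)$ via Lemma~\ref{lemma:estimate_products_of_T}. The paper first establishes convergence of $\{D^\nu f_k\}$ for all $|\nu|\le\ell$ and afterwards derives the quantitative bound~\eqref{eq:der} for $|\nu|=\ell$, whereas you run both as a single Cauchy estimate, but this is purely organizational; the ``bookkeeping'' you flag as the main obstacle is exactly the content of equation~\eqref{prop2:propertiesDerivatives} and the subsequent paragraph in the paper's proof.
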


\begin{proof} Note that, by \cite[p.137]{ChenJiaRie}, the
function $f$ in \eqref{def:D_nu_f_k} is an appropriate starting function for the cascade
algorithm. Moreover, by  \cite[Theorem 6.3]{J98}, the assumptions
on $f$ imply that $f$ satisfies Strang-Fix conditions of order
$\ell+1$, i.e. its Fourier transform $\hat{f}$ satisfies
$$
 \hat{f}(0)=1, \quad D^\mu \hat{f}(\alpha)=0, \quad \alpha \in \ZZ^s \setminus \{0\}, \quad \mu \in \NN_0^s, \quad |\mu|< \ell+1.
$$
Consequently, its derivatives $D^\nu f$, $\nu \in \NN_0^s$,
$|\nu|=j$, $j=1, \ldots,\ell$, satisfy
$$
  D^\mu \widehat{( D^\nu f)}(\alpha)=0, \quad \alpha \in \ZZ^s, \quad \mu \in \NN_0^s, \quad |\mu|<
  j.
$$
Thus, by the Poisson summation formula, we get
\begin{equation} \label{prop2:propertiesDerivatives}
 \sum_{\alpha \in \ZZ^s} p(\alpha) D^\nu f(x-\alpha)=0, \quad x \in \RR^s,
\end{equation}
for all polynomial sequences $\{p(\alpha), \ \alpha \in \ZZ^s\}$,
$p \in \Pi_j$. Note that we can chose $f$ such that $\hbox{supp} f
\cap \ZZ^s \subset K$. Then, the properties
\eqref{prop2:propertiesDerivatives} of $D^\nu f$ imply that, after
the transformation  discussed in \S
\ref{subsec:transformation_basis}, the first $\displaystyle
\sum_{i=1}^{j} d_i$ entries of the vectors
$$
 v(x):=\left( D^\nu f(x+\alpha)\right)_{\alpha \in K}, \quad x \in
 [0,1]^s, \quad |\nu|=j, \quad j=1, \ldots,\ell,
$$
are equal to zero.
Note that the ordering of the  entries in $ v(x)$ corresponds to
the ordering of the columns of $T_\varepsilon^{(k)}$ defined in
\eqref{def:Tepsilon}. By Theorem \ref{teo:convergence}, the limit
functions of the non-stationary scheme are $C^0(\RR^s)$, i.e. the
sequence $\{f_k, \  k\ge 1\}$ is a uniformly convergent Cauchy
sequence. Similarly to the stationary case, to show that the
non-stationary scheme is $C^j-$convergent, $j=1,\ldots,\ell$, we
need to study the uniform convergence of the sequences $\{D^\nu
f_k, \ k\ge 1\}$ for all $\nu \in \NN_0^s$, $|\nu|=j$.
Equivalently,  for every choice of $\varepsilon_1, \ldots \varepsilon_k
\in E$, need to study the convergence of the vector-sequences
$\{m^{j k} T^{(1)}_{\varepsilon_1} \cdots T^{(k)}_{\varepsilon_k}
w, \ k \ge 1\}$, where $T_\varepsilon^{(k)}$ are defined from
$\{{\mathbf a}^{(k)}, \ k \ge 1\}$  and the vector $w \in
\RR^{|K|}$ is arbitrary and such that its first $\displaystyle \sum_{i=1}^{j}
d_i$ entries are zero. Lemma \ref{lemma:estimate_products_of_T},
the structure of $w$ and the summability of $\{\eta_k, \ k \ge
1\}$ imply the convergence of the vector-sequences $\{m^{j k}
T^{(1)}_{\varepsilon_1} \cdots T^{(k)}_{\varepsilon_k} w, \ k \ge
1\}$ for $j=1, \ldots, \ell$. Thus,  the non-stationary scheme is
$C^\ell-$convergent.

\smallskip \noindent We prove next the estimate \eqref{eq:der}. Let $\nu \in \NN_0^s$,
$|\nu|=\ell$. Due to $\displaystyle \phi_1=\lim_{k \rightarrow
\infty} \cT^{(1)} \ldots \cT^{(k)} f$ and by the assumption of
refinability of $f$, i.e. $\displaystyle f=\cT f=\sum_{\alpha \in
\ZZ^s} \re(\alpha)f(Mx-\alpha)$, we have
\begin{eqnarray*}
 \|D^\nu f_k-D^\nu \phi_1\|_{\infty}&\le& \sum_{n=k}^\infty
 \|D^\nu f_{n+1}- D^\nu f_{n} \|_{\infty} \\ &=& \sum_{n=k}^\infty
 m^{\ell (n+1)} \| \cT^{(1)} \ldots \cT^{(n)} \left(\cT^{(n+1)}-\cT \right) (D^\nu f)(M^{-(n+1)} \cdot)\|_{\infty}.
\end{eqnarray*}
As above, to estimate the norms $\| \cT^{(1)} \ldots \cT^{(n)}
\left(\cT^{(n+1)}-\cT \right) (D^\nu f)(M^{-(n+1)}
\cdot)\|_{\infty}$, we need to estimate the vector-norms of
$$
 T_{\varepsilon_1}^{(1)} \cdots T^{(n)}_{\varepsilon_{n}}
 \left(T_{\varepsilon_{n+1}}^{(n+1)}-T_{\varepsilon_{n+1},
 \mathbf{e}} \right)w,
$$
where $|K| \times |K|$ matrices $T_{\varepsilon,\mathbf{e}}$,
$\varepsilon \in E$, are derived from the mask $\mathbf{e}$, see
\eqref{eq:T_a_after_transformation}, and the first $\displaystyle
\sum_{j=1}^{\ell+1} d_j$ entries of the  vector $w \in \RR^{|K|}$
are zero. By assumption, there exists a constant $\beta>0$ such
that the entries of all $b_{j,\varepsilon,\mathbf{e}}$ and
$b^{(k)}_{j,\varepsilon}$ are less than $\beta$ in the absolute
value. The approximate sum rules of order $\ell+1$ imply that the absolute values of
the entries of the vectors
$\left(T_{\varepsilon_n}^{(n+1)}-T_{\varepsilon_n,\mathbf{e}}
\right) w$ with indices $\displaystyle 1+\sum_{j=1}^r d_j, \ldots,
\sum_{j=1}^{r+1} d_j$, $r=0, \ldots, \ell$, are bounded
respectively by $\sigma_{n+1}m^{-(\ell-r)(n+1)}$. All other
entries are bounded by $2\beta$. Thus, by Lemma
\ref{lemma:estimate_products_of_T}, we get that the entries  of
the vectors $m^{\ell (n+1)} T_{\varepsilon_1}^{(1)} \cdots
T^{(n)}_{\varepsilon_{n}}
\left(T_{\varepsilon_{n+1}}^{(n+1)}-T_{\varepsilon_{n+1},
\mathbf{e}} \right)w$ with indices $\displaystyle 1+\sum_{j=1}^r
d_j, \ldots, \sum_{j=1}^{r+1} d_j$, $r=0, \ldots, \ell$,  are
equal to ${\cal O} (\sigma_{n+1})$, all other entries are equal to
${\cal O}(\eta_n)$. Therefore, by definition of $\{\eta_k, \ k \ge
1\}$ in \eqref{def:eta_k}, we get
$$
 \|D^\nu f_k- D^\nu \phi_1\|_\infty \le C \sum_{n=k}^\infty \eta_n, \quad k \ge
 1,
$$
for some $C>0$ independent of $k$.
\end{proof}

The second part of the proof of Theorem \ref{th:main_Hoelder} is given in
Proposition \ref{prop:Hoelder_proof} which yields the desired estimate for the
H\"older regularity $\alpha$ of the scheme $\{ S_{{\mathbf a}^{(k)}}, \ k \ge
1\}$.

\begin{proposition}\label{prop:Hoelder_proof} Let $k \ge 1$, $h \in \RR^s$,  $m^{-(k+1)}< \|h\|_\infty \le
m^{-k}$ and  $\ell \ge 0$.  Assume that the symbols of $\{ S_{{\mathbf
a}^{(k)}}, \ k \ge 1\}$ satisfy approximate sum rules of order $\ell+1$ and
$\rho(\cT_\cA|_{V_\ell})<m^{-\ell}$. Then there exists a constant $C>0$
independent of $k$ such that, for $\eta_n$ as in (\ref{def:eta_k}), we have
\begin{equation} \label{eq:estimate_in_Hoelder_proof}
 \|D^\nu \phi_1(\cdot+h)-D^\nu \phi_1(\cdot)\|_\infty \le C
 \sum_{n=k}^\infty \eta_n, \quad \nu \in \NN_0^s, \quad
 |\nu|=\ell.
\end{equation}
Moreover, the H\"older exponent $\alpha$ of $\phi_1 \in C^\ell(\RR^s)$ satisfies
\begin{equation}\label{estimate_Hoelder_reg}
 \alpha \ge \min \left\{ -\log_m \rho_\cA, -\limsup_{k \rightarrow \infty} \frac{\log_m
 \delta_k}{k}\right\}.
\end{equation}
\end{proposition}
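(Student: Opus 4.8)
The plan is to establish the pointwise estimate \eqref{eq:estimate_in_Hoelder_proof} first and then to read off the H\"older bound \eqref{estimate_Hoelder_reg} from it by an explicit estimate of the tail of the summable sequence $\{\eta_n\}$. Fix $k\ge1$, $h\in\RR^s$ with $m^{-(k+1)}<\|h\|_\infty\le m^{-k}$, and $\nu\in\NN_0^s$ with $|\nu|=\ell$. Since $D^\nu f_k\to D^\nu\phi_1$ uniformly by Proposition~\ref{prop:convergence_Cl}, the triangle inequality together with the translation invariance of $\|\cdot\|_\infty$ gives
\[
 \|D^\nu\phi_1(\cdot+h)-D^\nu\phi_1\|_\infty\ \le\ \|D^\nu f_k(\cdot+h)-D^\nu f_k\|_\infty\ +\ 2\,\|D^\nu f_k-D^\nu\phi_1\|_\infty ,
\]
and the last summand is bounded by $2C\sum_{n\ge k}\eta_n$ thanks to \eqref{eq:der}. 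So it remains to prove $\|D^\nu f_k(\cdot+h)-D^\nu f_k\|_\infty={\cal O}(\eta_k)$, which is then absorbed into $C\sum_{n\ge k}\eta_n$.

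For this step I would take the test function $f$ one order smoother, $f\in C^{\ell+1}(\RR^s)$ (which is possible, since test functions may be taken arbitrarily smooth, e.g.\ high-degree tensor-product B-splines), so that $\nabla D^\nu f$ is continuous and compactly supported with components the derivatives $D^{\nu'}f$, $|\nu'|=\ell+1$. Differentiating \eqref{def:D_nu_f_k} once more gives $\nabla D^\nu f_k=m^{(\ell+1)k}\,\cT^{(1)}\cdots\cT^{(k)}\nabla D^\nu f$, so by the mean value theorem and $\|M^kh\|_\infty=m^k\|h\|_\infty\le1$,
\[
 \|D^\nu f_k(\cdot+h)-D^\nu f_k\|_\infty\ \le\ m^{-k}\,\|\nabla D^\nu f_k\|_\infty\ \le\ m^{\ell k}\,\|\cT^{(1)}\cdots\cT^{(k)}\nabla D^\nu f\|_\infty .
\]
Because $f$ satisfies the Strang--Fix conditions of order $\ell+1$, the same Poisson-summation argument as in Proposition~\ref{prop:convergence_Cl} shows $\sum_{\alpha\in\ZZ^s}p(\alpha)D^{\nu'}f(\cdot-\alpha)=0$ for every $p\in\Pi_\ell$ whenever $|\nu'|=\ell+1$; hence, after the change of basis of \S\ref{subsec:transformation_basis}, the coordinate vectors $\bigl(D^{\nu'}f(t+\gamma)\bigr)_{\gamma\in K}$ have their first $\sum_{i=1}^{\ell+1}d_i$ entries equal to zero, i.e.\ they lie entirely in the last block. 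Consequently only the last block of columns of the products $T^{(1)}_{\varepsilon_1}\cdots T^{(k)}_{\varepsilon_k}$ is relevant, and by Lemma~\ref{lemma:estimate_products_of_T} those columns are ${\cal O}(m^{-\ell k}\eta_k)$ with a constant uniform in $k$ (the uniform column bound of Lemma~\ref{l10}); passing back to the function level gives $\|\cT^{(1)}\cdots\cT^{(k)}D^{\nu'}f\|_\infty={\cal O}(m^{-\ell k}\eta_k)$ for each such $\nu'$. Plugging this into the last display yields $\|D^\nu f_k(\cdot+h)-D^\nu f_k\|_\infty={\cal O}(m^{\ell k}m^{-\ell k}\eta_k)={\cal O}(\eta_k)$, which proves \eqref{eq:estimate_in_Hoelder_proof}.

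To deduce \eqref{estimate_Hoelder_reg} I would estimate $\sum_{n\ge k}\eta_n$ explicitly. With $\sigma_0=1$, $\sigma_j=m^{j\ell}\delta_j$ and $q=m^{\ell}\gamma\in(0,1)$ as in \eqref{def:eta_k}, interchanging the order of summation gives $\sum_{n\ge k}\eta_n=(1-q)^{-1}\bigl(\sum_{j=0}^{k-1}\sigma_j q^{\,k-j}+\sum_{j\ge k}\sigma_j\bigr)$. If $-\limsup_{k\to\infty}(\log_m\delta_k)/k\le\ell$ the asserted lower bound is trivial, since $\phi_1\in C^\ell$ already forces $\alpha\ge\ell$; otherwise fix $\tau'$ with $\ell<\tau'<-\limsup_{k\to\infty}(\log_m\delta_k)/k$, so $\delta_j\le C'm^{-\tau'j}$ for all $j\ge1$. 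Writing $q=m^{-\theta}$ with $\theta=-\log_m\gamma-\ell>0$, geometric summation gives $\sum_{j\ge k}\sigma_j={\cal O}(m^{-(\tau'-\ell)k})$ and $\sum_{j=0}^{k-1}\sigma_j q^{k-j}={\cal O}(m^{-\min(\theta,\tau'-\ell)k+\eps k})$ for every $\eps>0$ (the $\eps$ absorbing a possible logarithmic factor when $\theta=\tau'-\ell$). Hence $\sum_{n\ge k}\eta_n={\cal O}(m^{-(\min(\theta,\tau'-\ell)-\eps)k})$, and since $m^{-k}<m\,\|h\|_\infty$, estimate \eqref{eq:estimate_in_Hoelder_proof} becomes $\|D^\nu\phi_1(\cdot+h)-D^\nu\phi_1\|_\infty\le C''\|h\|_\infty^{\,\min(\theta,\tau'-\ell)-\eps}$ for $|\nu|=\ell$. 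Thus $D^\nu\phi_1$ is H\"older continuous of exponent $\min(\theta,\tau'-\ell)-\eps$, so the H\"older exponent of $\phi_1$ is at least $\ell+\min(\theta,\tau'-\ell)-\eps=\min(-\log_m\gamma,\tau')-\eps$. Letting $\eps\downarrow0$, $\tau'\uparrow-\limsup_{k\to\infty}(\log_m\delta_k)/k$, and $\gamma\downarrow\rho_\cA$ (recall that $\gamma$ in \eqref{eq:estimate_Qepsilonk} can be chosen arbitrarily close to $\rho_\cA=\rho(\cT_\cA|_{V_\ell})$ from above) gives exactly \eqref{estimate_Hoelder_reg}.

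The main obstacle is the first estimate, specifically the requirement $\|D^\nu f_k(\cdot+h)-D^\nu f_k\|_\infty={\cal O}(\eta_k)$ rather than merely ${\cal O}(1)$: a crude bound through the $\ell_1$-norm of the $k$-fold mask convolution grows geometrically once multiplied by $m^{\ell k}$ and is useless. The decisive observation is that differentiating the test function one more time produces functions vanishing against all polynomials of degree up to $\ell$ — one order more than the $\ell$-th derivatives treated in Proposition~\ref{prop:convergence_Cl} — so their coordinate vectors lie in the single block on which Lemma~\ref{lemma:estimate_products_of_T} provides the sharp decay $m^{-\ell k}\eta_k$; the exact cancellation of the three factors $m^{(\ell+1)k}$ (from differentiating $k$ levels of subdivision), $m^{-k}$ (from $\|h\|_\infty\le m^{-k}$) and $m^{-\ell k}$ (from the matrix products) is what collapses everything to ${\cal O}(\eta_k)$. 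The remaining steps — the interchange of summations for $\sum_{n\ge k}\eta_n$ and the passage to the limit in $\gamma$ and $\tau'$ — are routine.
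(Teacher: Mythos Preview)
Your proof is correct and follows essentially the same route as the paper: split via the triangle inequality, control the two outer terms by \eqref{eq:der} from Proposition~\ref{prop:convergence_Cl}, and bound $\|\Delta_h D^\nu f_k\|_\infty$ by placing the relevant coordinate vector entirely in the last block so that Lemma~\ref{lemma:estimate_products_of_T} delivers the sharp $m^{-\ell k}\eta_k$ decay, after which the H\"older exponent is read off from an explicit estimate of the tail $\sum_{n\ge k}\eta_n$. The only technical difference is in how the extra vanishing moment is produced: you pass to $D^{\nu'}f$ with $|\nu'|=\ell+1$ via the mean value theorem (hence need $f\in C^{\ell+1}$), whereas the paper works directly with the finite difference $\Delta_{m^kh}D^\nu f$ and uses that the operator $\Delta_h$ itself annihilates one more polynomial degree, so $f\in C^\ell$ suffices; both mechanisms land the vector in $V_\ell$ and the rest of the argument is identical.
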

\begin{proof} Let $k \ge 1$, $|\nu|=\ell$, and $h \in \RR^s$ satisfy
$m^{-(k+1)}< \|h\|_\infty \le m^{-k}$.  To derive the estimate in
\eqref{eq:estimate_in_Hoelder_proof}, we use the triangle
inequality
\begin{eqnarray} \label{prop3:estimate}
 \|D^\nu \phi_1(\cdot+h)&-&D^\mu \phi_1(\cdot)\|_\infty \le
 \|D^\mu \phi_1(\cdot+h)-D^\mu f_k(\cdot+h)\|_\infty \\&+&
 \|D^\mu \phi_1-D^\mu f_k\|_\infty +
 \|D^\mu f_k(\cdot+h)-D^\mu f_k(\cdot)\|_\infty, \notag
\end{eqnarray}
where $\{f_k, \ k \ge 1\}$ are defined in \eqref{def:D_nu_f_k}, and estimate
each of the summands on the right hand side. Note that, for $\Delta_h
f_k:=f_k(\cdot+h)-f_k(\cdot)$, we have
$$
 \Delta_h D^\mu f_k =m^{\ell k}\cT^{(1)} \ldots \cT^{(k)} \Delta_{m^k
 h} D^\mu f.
$$
Due to $\|m^k h\|_\infty \le 1$ and by the definition of
$\Delta_h$, we have
$$
 \hbox{supp} \Delta_{m^k h}D^\nu f \subset \hbox{supp}
 f+[-1,1]^s,
$$
where without loss of generality we assume that $\left(\hbox{supp}
 f+[-1,1]^s \right) \cap \ZZ^s \subseteq K$. Define the vector-valued function
$$
 v(x):=(\Delta_{m^k h} D^\nu f(x+\alpha))_{\alpha \in K}, \quad
 x \in [0,1]^s.
$$
By the same argument as in the proof of Proposition
\ref{prop:convergence_Cl} and by the definition of the operator
$\Delta_h$, the first $\displaystyle{\sum_{j=1}^{\ell+1}} d_j$
components of $v$ are zero for all $x \in \RR^s$. Therefore, by
Lemma \ref{lemma:estimate_products_of_T}, we get
$$
 \|T_{\varepsilon_1}^{(1)} \cdots T_{\varepsilon_k}^{(k)}
 v(x)\|
 \le C_1 m^{-\ell k} \eta_k \| v(x)\| \le C_1 m^{-\ell k}\eta_k \,  2 \,
 C_2
 \, |K|, \quad x \in \RR^s,
$$
where  $\|v(x)\| \le 2 \, C_2 \, |K|$, $x \in \RR^s$, due to
$\max_{|\nu|=\ell}\|D^\nu f\|_\infty \le C_2$. Thus,
$$
 \| D^\nu f_k(\cdot+h)-D^\nu f_k(\cdot)\|_\infty \le C_3 \eta_k,
 \quad C_3:=C_1 m^{-\ell k} \, 2 \, C_2 \, |K|.
$$
The estimates for the two remaining terms in
\eqref{prop3:estimate} and, thus, the estimate
\eqref{eq:estimate_in_Hoelder_proof} follow  by \eqref{eq:der}. Next, we derive the lower bound for the H\"older exponent $\alpha$
of $\phi_1$. Note that, by definition of $\sigma_k$, we have the
equivalence
$$
\lim \sup_{k \rightarrow \infty} \sigma_k^{1/k} \ge 1 \quad
\Leftrightarrow \quad \ell + \lim \sup_{k \rightarrow \infty}
\frac{\log_m\delta_k}{k} \ge 0.
$$
Thus, if $\lim \sup_{k \rightarrow \infty} \sigma_k^{1/k} \ge 1$,
then
$
 \min\left\{-\log_m \rho_\cA, -\limsup \frac{\log_m \delta_k}{k}
 \right\}\le \ell
$
and the estimate \eqref{estimate_Hoelder_reg} holds, since $\phi_1
\in C^\ell(\RR^s)$ and, thus, $\alpha \ge \ell$.
 Otherwise, if
$\displaystyle \lim \sup_{k \rightarrow \infty} \sigma_k^{1/k} <
1$, then there exists $\theta$ such that
$\displaystyle
 \lim \sup_{k\rightarrow \infty} \sigma_k^{1/k}< \theta <1
$
and, thus, a constant $C_0>0$ such that $\sigma_k \le C_0
\theta^k$, $k \ge 1$. Therefore, by definition of $\eta_k$ and
using the estimate \eqref{eq:estimate_in_Hoelder_proof}, we get
\begin{eqnarray*}
 \|\Delta_h D^\nu \phi_1 \|_\infty &\le& C\left(\eta_k+\sum_{n=k+1}^{\infty} \eta_n \right)
 =C \left( \sum_{j=0}^k \sigma_j q^{k-j}+\frac{1}{1-q}
 \sum_{n=k+1}^\infty \sigma_n \right) \\&\le& C \, C_0\left( \sum_{j=0}^k
 \theta^j q^{k-j}+\frac{1}{1-q}\sum_{n=k+1}^\infty \theta^n
 \right)\\
 &\le& C \, C_0\left( \sum_{j=0}^k \max\{\theta,q\}^j \max\{\theta,q\}^{k-j}+
 \frac{\theta^{k+1}}{1-q}\sum_{n=k+1}^\infty \theta^{n-(k+1)} \right) \\
 &\le& C \, C_0\left((k+1) \max\{\theta,q\}^k+
 \frac{\theta^{k+1}}{(1-q)(1-\theta)}\right).
\end{eqnarray*}
Therefore, due to $0 \le 1-q<1$, we get
\begin{eqnarray*}
 \|\Delta_h D^\nu \phi_1 \|_\infty
 \le C_4 (k+1) \max\{\theta,q\}^k, \quad C_4:=\max \left\{1, \frac{\theta}{1-\theta}
 \right\}\, \frac{C\, C_0}{1-q}.
\end{eqnarray*}
Moreover, due to $\|h\|_\infty \le m^{-k}$,  we
have
$$
 \max\{\theta,q\}^k=m^{k \, \text{log}_m \max\{\theta,q \}} \le \|h\|_\infty^{-\text{log}_m
 \max\{\theta,q\}}
$$
and, from  $\frac{1}{m}\|h\|_\infty \le
m^{-(k+1)}$, we get $(k+1) \le \log_m \frac{m}{\|h\|_\infty}$.
Thus,
$$
 \|\Delta_h D^\nu \phi_1\|_\infty \le C_1 \log_m \left(\frac{m}{\|h\|_\infty} \right)\,
 \|h\|_\infty^{-\log_m
 \max\{\theta,q\}}.
$$
Note that for any $\epsilon \in (0,1)$, due to the fact that $-\log(t)$ is bounded by
$t^{-\epsilon}$ for sufficiently small $t$, we get, for small $\|h\|_\infty$,
$$
 \|\Delta_h D^\nu \phi_1\|_\infty \le C_1
\|h\|_\infty^{-\log_m
\max\{\theta,q\}-\epsilon}.
$$
By \eqref{eq:estimate_Qepsilonk} and \eqref{def:eta_k},
we have $q>m^\ell \rho_\cA$. Thus, since $\theta >\lim
\sup_{k\rightarrow \infty} \sigma_k^{1/k}$, we get
\begin{eqnarray*}
 \alpha \ge \ell-\log_m
 \max\{\theta,q\} &=&
 \ell-\max\{\ell+\log_m \rho_\cA, \ell+\lim \sup_{k\rightarrow
\infty} \frac{\log_m\delta_k}{k}\} \\&=&\min \{-\log_m \rho_\cA,
-\lim \sup_{k\rightarrow \infty} \frac{\log_m\delta_k}{k}\}.
\end{eqnarray*}
\end{proof}.

\noindent
Combining Propositions \ref{prop:convergence_Cl} and
\ref{prop:Hoelder_proof}, we complete the proof of Theorem
\ref{th:main_Hoelder}.

\subsection{Rapidly vanishing approximate sum rules defects}
\label{subsec:rapidly}

The following immediate consequence of Theorem
\ref{th:main_Hoelder} states that, if the sequence of defects
$\{\delta_k, \ k \ge 1\}$ of the approximate sum rules decays
fast, then the lower bound on the H\"older exponent $\alpha$ of
$\phi_1$ only depends on the joint spectral radius $\rho_\cA$ of
the set $\cT_\cA|_{V_\ell}$.

\begin{corollary} \label{c10}
Assume that the symbols of $\{ S_{{\mathbf a}^{(k)}}, \ k \ge 1\}$ satisfy approximate sum
rules of order $\ell+1$ and $\rho(\cT_\cA|_{V_\ell})<m^{-\ell}$. If
$\limsup\limits_{k \to \infty}  \delta_k^{1/k} \, < \, \rho_\cA$,
then $\alpha \,  \ge \, - \log_m \rho_\cA$.
\end{corollary}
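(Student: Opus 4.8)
The plan is to read this off directly from Theorem~\ref{th:main_Hoelder}. The hypotheses of the corollary --- the symbols satisfy approximate sum rules of order $\ell+1$ and $\rho(\cT_\cA|_{V_\ell})<m^{-\ell}$ --- are exactly the hypotheses of that theorem, so we already have $\phi_1 \in C^\ell(\RR^s)$ together with the lower bound
$$
 \alpha \ \ge \ \min\Bigl\{\, -\log_m \rho_\cA,\ \ -\limsup_{k\to\infty}\frac{\log_m \delta_k}{k}\,\Bigr\}.
$$
All that remains is to show that the extra assumption $\limsup_{k\to\infty}\delta_k^{1/k} < \rho_\cA$ forces the second entry of this minimum to be no smaller than the first.

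The one substantive step is to pass to logarithms. Writing $\delta_k^{1/k} = m^{(\log_m \delta_k)/k}$ and using that $t\mapsto m^{t}$ is continuous and strictly increasing, I would obtain
$$
 \limsup_{k\to\infty}\delta_k^{1/k} \ = \ m^{\limsup_{k\to\infty}(\log_m \delta_k)/k},
$$
so that the hypothesis $\limsup_{k\to\infty}\delta_k^{1/k} < \rho_\cA$ is equivalent to $\limsup_{k\to\infty}(\log_m \delta_k)/k < \log_m \rho_\cA$, i.e. to $-\limsup_{k\to\infty}(\log_m \delta_k)/k > -\log_m \rho_\cA$. Consequently the minimum in the bound above is attained at its first entry, and Theorem~\ref{th:main_Hoelder} yields $\alpha \ge -\log_m \rho_\cA$, which is the assertion.

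There is essentially no obstacle here: the corollary is immediate once Theorem~\ref{th:main_Hoelder} is available, and the only points worth a word are the degenerate case $\limsup_{k\to\infty}\delta_k^{1/k}=0$ (where, with the convention $\log_m 0 = -\infty$, the second entry of the minimum is $+\infty$ and the conclusion is trivially the first entry), and the consistency remark that $\rho_\cA < m^{-\ell}$ forces $-\log_m\rho_\cA > \ell$, so that the bound $\alpha \ge -\log_m\rho_\cA$ is compatible with, and sharpens, the $C^\ell$-regularity of $\phi_1$ already supplied by Theorem~\ref{th:main_Hoelder}.
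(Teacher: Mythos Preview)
Your proof is correct and is exactly the approach the paper has in mind: the paper simply states that the corollary is an immediate consequence of Theorem~\ref{th:main_Hoelder} without writing out the logarithm step, which you have supplied. Your handling of the degenerate case $\limsup_{k\to\infty}\delta_k^{1/k}=0$ and the consistency remark are fine additions.
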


\noindent Next, in this subsection we prove Theorem \ref{th:stable_case} stated in the Introduction. It
shows that the inequality $\alpha \,  \ge \, - \log_m \rho_\cA$ in
Corollary \ref{c10} becomes equality, if the set $\cA$ of the
limit points of the sequence $\{\mathbf{a}^{(k)}, \ k\ge 1 \}$
consists only of a single element $\mathbf{a}$ and the
corresponding refinable limit function  of $S_\ab$ is stable. Note that  Theorem \ref{th:stable_case} is a
generalization of a well-known fact about the exact H\"older
regularity of stationary schemes in the stable case.

\smallskip \noindent In the proof of Theorem \ref{th:stable_case} we make use
of several auxiliary facts on long matrix products. The first one
of them is stated in the following lemma which is a special case
of~\cite[Proposition 2]{P}.

\begin{lemma}\label{l110}
Let $\mathcal{M}$ be a compact set of $d\times d$ matrices
 and $y \in \RR^d$. If $\rho(\mathcal M) > 1$ and $y$ does not
belong to a common invariant subspace of the matrices in
$\mathcal{M}$, then the sequence $\left\{ \displaystyle \max_{P_n
\in {\mathcal M}^n} \|P_n y \|, \ n \ge 1 \right\}$ diverges as $n \to \infty$.
\end{lemma}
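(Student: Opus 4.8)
The plan is to argue by contradiction, turning the hypothesis on $y$ into the statement that the $\mathcal{M}$-orbit of $y$ spans the whole space. Write $a_n:=\max_{P_n\in\mathcal{M}^n}\|P_n y\|$ and $\hat\rho_n:=\max_{P_n\in\mathcal{M}^n}\|P_n\|$; since $\mathcal{M}$ is compact, so is each $\mathcal{M}^n$, so all these maxima are attained and finite. First I would record the reduction. The set
$$
 L:=\mathrm{span}\bigl\{\,Py\ :\ P\in\mathcal{M}^n,\ n\ge 0\,\bigr\}
$$
(with $\mathcal{M}^0:=\{I\}$, so that $y\in L$) satisfies $ML\subseteq L$ for every $M\in\mathcal{M}$, because $M(\sum_j c_jP_jy)=\sum_j c_j(MP_j)y$ and $MP_j\in\mathcal{M}^{n_j+1}$. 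Hence $L$ is a common invariant subspace containing $y$, and by assumption it cannot be a proper subspace, so $L=\RR^d$. Since $L$ is the union of the increasing chain $L_m:=\mathrm{span}\{Py:P\in\mathcal{M}^n,\ n\le m\}$ of subspaces of the finite-dimensional space $\RR^d$, the chain stabilises, so $L_{m_0}=\RR^d$ for some $m_0$, and we may fix a basis $v_1=Q_1y,\dots,v_d=Q_dy$ of $\RR^d$ with each $Q_i\in\mathcal{M}^{n_i}$, $n_i\le m_0$.

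Next, suppose for contradiction that $\{a_n\}$ is bounded, say $a_n\le C$ for all $n\ge 0$. Let $K:=\max\{\sum_{i=1}^d|c_i|\ :\ \|\sum_i c_i v_i\|=1\}<\infty$, a constant depending only on the fixed basis. Then for every $n$, every $P_n\in\mathcal{M}^n$ and every unit vector $x=\sum_i c_iv_i$,
$$
 \|P_nx\|\ \le\ \sum_{i=1}^d|c_i|\,\|P_nQ_iy\|\ \le\ K\max_i\|P_nQ_iy\|\ \le\ K\max_i a_{n+n_i}\ \le\ KC,
$$
because $P_nQ_i$ is a product of $n+n_i$ matrices from $\mathcal{M}$, so $\|P_nQ_iy\|\le a_{n+n_i}$. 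Taking the supremum over unit $x$ gives $\hat\rho_n\le KC$ for all $n$. On the other hand, submultiplicativity of the operator norm yields $\hat\rho_{n+k}\le\hat\rho_n\hat\rho_k$, so by Fekete's lemma $\rho(\mathcal{M})=\inf_n\hat\rho_n^{1/n}$ and therefore $\hat\rho_n\ge\rho(\mathcal{M})^n$ for all $n$. Since $\rho(\mathcal{M})>1$ this forces $\hat\rho_n\to\infty$, contradicting the uniform bound $\hat\rho_n\le KC$. Hence $\{a_n\}$ is unbounded, i.e.\ $\max_{P_n\in\mathcal{M}^n}\|P_ny\|$ diverges as $n\to\infty$; this recovers exactly the special case of \cite[Proposition 2]{P} that is needed.

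The only genuinely delicate point is the opening reduction — translating ``$y$ lies in no proper common invariant subspace'' into ``finitely many vectors $P_{n_i}y$ span $\RR^d$'' — together with the bookkeeping that composing an $n$-term word with a fixed $n_i$-term word produces an $(n+n_i)$-term word, whose action on $y$ is then controlled by $a_{n+n_i}$. Everything after that is a one-line norm estimate and the standard Fekete characterisation of the joint spectral radius, so I do not anticipate further obstacles. The argument also makes transparent why the spanning hypothesis is essential: if $y$ belonged to a proper invariant subspace $W$, all $P_ny$ would remain in $W$, and one could not reconstruct arbitrary long products from the behaviour of $y$.
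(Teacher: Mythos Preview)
Your proof is correct. The paper does not actually prove this lemma --- it simply records that the statement is a special case of \cite[Proposition~2]{P} --- so your self-contained argument is a genuine addition rather than a reproduction. The reduction from ``$y$ lies in no proper common invariant subspace'' to a finite spanning set $\{Q_iy\}_{i=1}^d$ with $Q_i\in\mathcal{M}^{n_i}$ is the natural one, and the subsequent bookkeeping (a word of length $n$ followed by $Q_i$ is a word of length $n+n_i$, hence $\|P_nQ_iy\|\le a_{n+n_i}$) together with the Fekete inequality $\hat\rho_n\ge\rho(\mathcal{M})^n$ is clean and correct. One small remark: your contradiction establishes that $\{a_n\}$ is \emph{unbounded}, which is formally weaker than $a_n\to\infty$; but unboundedness is precisely what the proof of Lemma~\ref{l120} consumes (for every constant $C_L$ there exists a product of some length with $\|P_ny\|>C_L$), so nothing is lost for the paper's purposes.
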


\n Lemma \ref{l110} and the definition of the sequence $\left\{
\displaystyle \max_{P_n \in {\mathcal M}^n} \|P_n y \|, \ n \ge 1
\right\}$ yield

\begin{lemma}\label{l120}
Let $\mathcal{M}$ be a compact set of $d\times d$ matrices  and $y
\in \RR^d$. If $\rho(\mathcal M) > 1$ and $y$ does not belong to a
common invariant subspace of the matrices in~$\mathcal M$, then
for any $L \in \NN$ there exists $n \ge L$ such that
$$
\|M_1\cdots M_n y\| \, > \| y\|\ \quad \hbox{and} \quad \, \|M_1\cdots M_n y\| \,
> \|M_{n-i}\cdots M_n y\|\, , \ i = 0, \ldots , n-2,
$$
for $M_j \in \cM$.
\end{lemma}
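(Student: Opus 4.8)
The plan is to deduce Lemma~\ref{l120} from Lemma~\ref{l110} by a standard record-value (pigeonhole) argument applied to the growth function of the longest product of matrices from $\mathcal{M}$ hitting $y$.

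First I would set $g(0):=\|y\|$ and, for $n\ge 1$,
$$
 g(n)\ :=\ \max_{M_1,\ldots,M_n\in\mathcal{M}}\ \|M_1\cdots M_n y\|,
$$
noting that this maximum is attained because $\mathcal{M}$ is compact (hence so is $\mathcal{M}^n$) and $(M_1,\ldots,M_n)\mapsto\|M_1\cdots M_n y\|$ is continuous. This is precisely the sequence appearing in Lemma~\ref{l110}, so the hypotheses $\rho(\mathcal{M})>1$ and $y$ not lying in any common invariant subspace of $\mathcal{M}$ yield $g(n)\to\infty$ as $n\to\infty$.

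Next I would prove that the set of record indices $R:=\{\,n\ge 1:\ g(n)>g(j)\ \text{for all}\ 0\le j<n\,\}$ is infinite. Indeed, if $R$ were finite, then denoting by $n_0$ its largest element (or $n_0=0$ if $R=\emptyset$), every $n>n_0$ would fail to be a record, so the running maximum $\max_{0\le j\le n}g(j)$ would stay constant for $n\ge n_0$; hence $g$ would be bounded, contradicting $g(n)\to\infty$. Given $L\in\NN$, I can therefore pick $n\in R$ with $n\ge L$ and choose $M_1,\ldots,M_n\in\mathcal{M}$ realizing $\|M_1\cdots M_n y\|=g(n)$. It then remains to verify the two inequalities: since $n\in R$ and $n\ge 1$, we get $\|M_1\cdots M_n y\|=g(n)>g(0)=\|y\|$, which is the first one; and for each $i\in\{0,\ldots,n-2\}$ the suffix product $M_{n-i}\cdots M_n$ uses $i+1\le n-1$ matrices of $\mathcal{M}$ applied to $y$, so $\|M_{n-i}\cdots M_n y\|\le g(i+1)$, and because $1\le i+1\le n-1<n$ and $n\in R$ we have $g(i+1)<g(n)=\|M_1\cdots M_n y\|$, the strict inequality claimed.

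I do not expect a genuine obstacle here; the only points needing a little care are (i) that the maximum defining $g(n)$ is actually attained, so that the chosen $M_j$ exist — this is where compactness of $\mathcal{M}$ is used; (ii) matching this $g(n)$ with the divergent sequence supplied by Lemma~\ref{l110}; and (iii) the elementary observation that an unbounded sequence has infinitely many strict record values, which is exactly what lets us satisfy the requirement $n\ge L$.
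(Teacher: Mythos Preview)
Your proof is correct and follows essentially the same approach as the paper: the paper sets $C_L=\max\{\|P_j y\|:P_j\in\mathcal{M}^j,\ j\le L\}$ and takes the \emph{shortest} product $P_n$ with $\|P_n y\|>C_L$, which is precisely the first record index of your sequence $g$ beyond $L$. Your version is a bit more explicit about attainment via compactness and about why records occur infinitely often, but the underlying idea---minimality of $n$ forces all shorter suffix products to fall below the record value---is identical.
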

\begin{proof} Let $L \in \NN$ and $C_L = \max \, \bigl\{\|P_j y\| \ \bigl|
\ P_j \in {\mathcal M}^j,  j \le L\, \bigr\}$. Then the shortest
product  $P_n \in {\mathcal M}^n$ such that $\|P_n y\|
> C_L$ (the set of such products is nonempty by Lemma~\ref{l110})
possesses the desired property and has its length bigger than~$m$.
\end{proof}

\n Next, we adapt Lemma \ref{l120} to the non-stationary setting.
The proof of the following result is similar to the proof of
Lemma~\ref{l120} and we omit it.

\begin{lemma}\label{l130} Let $\mathcal{M}$ and $\mathcal M^{\, (k)}$, $k \ge 1$,
be compact sets of $d\times d$ matrices  and $y \in \RR^d$. Assume
that $\rho(\mathcal M) > 1$,  the sequence $\{\mathcal M^{\, (k)},
\ k \ge 1\}$ converges to $\mathcal M$ and $y$ does not belong to
a common invariant subspace of the matrices in~$\mathcal M$. Then
there exists $L \in \NN$ and $C>0$ such that  for any $\tilde{L}
\ge L$ there exists $n \ge \tilde{L}$ such that, for $M_j \in
\cM^{(j+L-1)}$,
$$
 \, \|M_1\cdots M_n y\| \, > C\| y\|\ \quad \hbox{and}
 \quad \, \|M_1\cdots M_n y\| \,
> C \|M_{n-i}\cdots M_n y\|\, , \ i = 0, \ldots , n-2.
$$
\end{lemma}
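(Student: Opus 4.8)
The plan is to regard this as the perturbed analogue of Lemma~\ref{l120}. That stationary argument used only two facts: that $\max_{P_n\in\mathcal{M}^n}\|P_ny\|$ diverges (Lemma~\ref{l110}), and that the shortest word whose action on~$y$ exceeds a given threshold automatically exceeds the action on~$y$ of each of its proper right-suffixes. Two complications appear here. First, the word $M_1\cdots M_n$ must obey $M_j\in\mathcal{M}^{\,(j+L-1)}$, so a right-suffix of it is again an admissible word but with a \emph{shifted} index; second, and more seriously, one cannot merely perturb a word supplied by Lemma~\ref{l120}, because the perturbation error is multiplicative in~$n$ and overwhelms any growth slower than $\|\mathcal{M}\|^{\,n}$. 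So instead of going through an abstract divergence statement I would build the required word by hand, routing it through an invariant expanding cone of a fixed word in~$\mathcal{M}$, a structure that survives small perturbations.

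\emph{Step 1: a robust expanding cone.} Since $\rho(\mathcal{M})>1$, by the Berger--Wang formula \cite{BerWan} there is a word $P\in\mathcal{M}^{q}$ with $\rho(P)>1$. Let $U$ be the sum of the generalized eigenspaces of~$P$ for the eigenvalues of modulus $>1$ and $W$ the sum of the remaining ones; replacing $P$ by a sufficiently high power (and $q$ accordingly) we may assume $\|P|_{W}\|<\alpha$, where $\alpha:=\inf_{0\neq u\in U}\|Pu\|/\|u\|>1$. Then a sufficiently narrow cone $\mathcal{C}$ around~$U$ satisfies $P\mathcal{C}\subset\mathcal{C}$ and $\|Pv\|\ge\gamma_{1}\|v\|$ for $v\in\mathcal{C}$, with some $\gamma_{1}>1$, and by continuity of the eigen-structure and of matrix multiplication the same holds, with a slightly smaller $\gamma_{1}>1$, for every product of $q$ matrices each within some $\eps_{0}>0$ of the corresponding factor of~$P$. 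Fix such an $\eps_{0}$ (small enough also for the bounded-word estimate below). Since $\mathcal{M}^{\,(k)}\to\mathcal{M}$, there is $L\in\NN$ such that every element of $\mathcal{M}^{\,(k)}$ is within $\eps_{0}$ of $\mathcal{M}$ for all $k\ge L$; hence any matrix occurring at a position $\ge1$ of an admissible word may be chosen within $\eps_{0}$ of any prescribed element of~$\mathcal{M}$.

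\emph{Step 2: the word and its growth.} Because $y$ lies in no common invariant subspace of~$\mathcal{M}$, some word $R\in\mathcal{M}^{m}$ has $\mathrm{proj}_{U}(Ry)\neq0$ (otherwise the closed span of the $\mathcal{M}$-orbit of~$y$ would be a common invariant subspace contained in $W\subsetneq\RR^d$), and then $P^{\,j_2}Ry$ lies deep inside $\mathcal{C}$ for a fixed $j_2$. Given $\tilde L\ge L$, I would take $N$ large and let $M_1\cdots M_n$, with $n=(N+j_2)q+m$, be the word $P^{\,N+j_2}R$ with each of its factors replaced --- as permitted above --- by an element of the appropriate set $\mathcal{M}^{\,(j+L-1)}$ within $\eps_{0}$ of it. Applied to~$y$, the bounded perturbed part $P^{\,j_2}R$ places $y$ into $\mathcal{C}$ with a fixed positive norm, and each of the $N$ remaining perturbed copies of~$P$ keeps the vector in $\mathcal{C}$ and multiplies its norm by at least $\gamma_{1}$; hence $\|M_1\cdots M_ny\|\ge c\,\gamma_{1}^{\,N}\ge c'\tilde\tau^{\,n}$ with $\tilde\tau:=\gamma_{1}^{1/q}>1$ and constants $c,c'>0$. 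For $N$ large this word has length $\ge\tilde L$ and $\|M_1\cdots M_ny\|$ exceeds $C\|y\|$ for any prescribed~$C$.

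\emph{Step 3: suffix domination, and the hard point.} A proper right-suffix $M_{n-i}\cdots M_n$ applied to~$y$ equals the state produced by the same cone dynamics after $t\le N$ complete copies of~$P$ (together with the bounded perturbed part), acted on by at most one incomplete copy of~$P$; thus its norm is at most $(\|\mathcal{M}\|+\eps_{0})^{q}$ times that state, while the $N-t+1\ge1$ further complete copies of~$P$ present in the full word each multiply the norm by at least $\gamma_{1}$. This gives $\|M_1\cdots M_ny\|>C\,\|M_{n-i}\cdots M_ny\|$ with the fixed constant $C:=\gamma_{1}(\|\mathcal{M}\|+\eps_{0})^{-q}$; right-suffixes lying entirely inside the bounded part are dominated once $N$ exceeds a fixed threshold, which we may assume, and taking $C\le1$ also covers the requirement $\|M_1\cdots M_ny\|>C\|y\|$. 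Thus $L$ and $C$ depend only on $\mathcal{M}$, $\{\mathcal{M}^{(k)}\}$ and~$y$, as required. The main obstacle is exactly the one flagged at the outset --- the stationary proof cannot simply be perturbed, since perturbations accumulate multiplicatively along a long word --- and the invariant-cone construction is what removes it; the remaining care is purely in fixing the parameters in order ($P$; a power of it; $\mathcal{C}$ and $\gamma_{1}$; $\eps_{0}$; $L$; $C$; and only then $N$ as a function of $\tilde L$) so that $L$ and $C$ are genuinely independent of $\tilde L$.
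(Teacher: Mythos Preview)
Your proof is correct. The paper actually omits the proof of Lemma~\ref{l130} entirely, saying only that it is ``similar to the proof of Lemma~\ref{l120}''. That stationary proof is the shortest-word-exceeding-a-threshold argument: divergence of $\max_{P_n\in\mathcal M^n}\|P_ny\|$ (Lemma~\ref{l110}) supplies arbitrarily long products beating any prescribed bound, and minimality of length forces domination over every proper right-suffix.

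You correctly identify why this does not transfer verbatim to the level-dependent setting. A right-suffix of an admissible word $M_1\cdots M_n$ with $M_j\in\mathcal M^{(j+L-1)}$ is again level-structured but with its starting index shifted, so it lies outside the family over which minimality is taken; and simply perturbing a long word furnished by Lemma~\ref{l120} fails because the perturbation error accumulates multiplicatively in the length while $L$ must remain fixed independently of~$\tilde L$. Your remedy --- routing the orbit of $y$ into a robust expanding cone for a fixed word $P\in\mathcal M^q$ via a finite prefix $P^{j_2}R$, and then stacking $N$ perturbed copies of $P$, each of which preserves the cone and expands by $\gamma_1>1$ regardless of $N$ --- is precisely what decouples the choice of $L$ (depending only on $\eps_0$) from the eventual length $n$. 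The order in which you fix parameters ($P$ and a suitable power; $R$ and $j_2$; then $\eps_0$; then $L$; then $C$; and only then $N=N(\tilde L)$) is the correct one, and your constant $C=\gamma_1(\|\mathcal M\|+\eps_0)^{-q}$, together with enlarging $N$ past a fixed threshold to absorb the finitely many suffixes lying inside the bounded part $P^{j_2}R$, does the job.

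So the two routes differ genuinely: the paper gestures at the minimality argument of Lemma~\ref{l120}, whereas you give an explicit cone construction that makes the perturbation-robustness transparent. Your version is longer but self-contained; the paper's pointer leaves the reader to discover (as you did) that the shifted-index and accumulated-perturbation issues require an additional idea.
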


\noindent We are  ready to  prove  Theorem
\ref{th:stable_case}.

\medskip
\begin{proof}[Proof of Theorem \ref{th:stable_case}]
Due to Corollary \ref{c10}, we only need to show that $\alpha \le
-\log_m \rho_{\mathbf a}$. Furthermore, by Lemma
\ref{lem:alpha_k=alpha_1}, it suffices to show that
$\alpha=\alpha_{\phi_n} \le -\log_m \rho_{\mathbf a}$ for some $n
\ge 1$. We choose an appropriate $n$ in the following way.
Firstly, $n$ should be such that
$$
 \rho(\{Q_\varepsilon^{(k)}, \ \varepsilon \in E, \ k \ge
 n\})< \rho_{\mathbf a}.
$$
(See \S \ref{subsec:transformation_basis} for the definition of
the matrices $Q_\varepsilon^{(k)}$.) Secondly, since by
assumption, there exists $\beta>0$ such that
$$
\displaystyle{\limsup_{k \rightarrow \infty} \delta_k^{1/k}} < \beta < \rho_\mathbf{a},
$$
thus, we can choose $n$ such that for any constant $C_0>0$ we have
$\delta_k < C_0 \beta^k$ for $k \ge n$. At the end of the proof we specify the particular constant $C_0$ needed for
our argument. Next, define
$$
 v(x)=\left( D^\nu \phi_n(x+\alpha) \right)_{\alpha \in K},
 \quad x \in [0,1]^s, \quad \nu \in \NN_0^s, \quad |\nu|=\ell.
$$
Let $k \ge 1$. By definition of $\phi_n$, for $x=\sum_{j=1}^k
\varepsilon_j m^{-j}$, $\varepsilon_j \in E$,  and $\|h\|_\infty
\le m^{-1}$, we have
\begin{equation} \label{aux_v}
 \Delta_{m^{-k}h} v(x)=  m^{\ell} T^{(n)}_{\varepsilon_1} \Delta_{m^{-k+1}h}
 v\left(\sum_{j=2}^k \varepsilon_j
m^{-j}\right)=m^{\ell k} T^{(n)}_{\varepsilon_1} \cdots
 T^{(n+k-1)}_{\varepsilon_k} \Delta_{h} v(0).
\end{equation}
By the same argument as in Proposition \ref{prop:convergence_Cl},
the first $\displaystyle L=\sum_{j=1}^{\ell+1} d_j$ components of
the vector $y:=\Delta_{h} v(0)$ are zero. Denote by
$\tilde{y}:=(y_{L+1}, \ldots, y_{|K|})^T$ the non-zero components
of $y$. W.l.o.g. we can assume that the vector $\tilde{y}$ does
not belong to any common invariant subspace of the matrices in
$\{T_{\varepsilon,\mathbf{a}}|_{V_\ell} \ : \ \varepsilon \in
E\}$. Otherwise, due to the stability of $\phi$ we have
$$
 \alpha_\phi=-\log_m \rho_\mathbf{a}=-\log_m \rho\{ T_{\varepsilon,\mathbf{a}}|_{W} \ : \ \varepsilon \in E\},
$$
where $W$ is the smallest subspace of $V_\ell$ such that it is
invariant under all operators in $\{ T_{\varepsilon,\mathbf{a}} \
: \ \varepsilon \in E\}$ and such that
$T_{\varepsilon,\mathbf{a}}|_{W}$, $\varepsilon \in E$, do not
have any common invariant subspace. For simplicity, we assume that
$W=V_\ell$, but the same argument we give below would apply, if
$W$ is a proper subspace of $V_\ell$. Let $r \in (\beta, \rho_\mathbf{a})$ be a real number.
The  sets
$$
 \cM=\{r^{-1}T_{\varepsilon,\mathbf{a}}|_{V_\ell}, \ \varepsilon \in E\} \quad
 \hbox{and} \quad \cM^{(k)}=\{r^{-1}Q^{(n+k-1)}_{\varepsilon}, \ \varepsilon \in E\},
 \quad k\ge 1,
$$
and the vector $\tilde{y}$  satisfy the assumptions of Lemma \ref{l130}. Thus,
we can appropriately modify $n$ chosen above  to get
\begin{eqnarray} \label{aux_r}
  &&\quad \|Q^{(n)}_{\varepsilon_1} \cdots Q^{(n+k-1)}_{\varepsilon_k} \tilde{y}\| \, > C r^k \| \tilde{y}\|\ \quad \hbox{and} \notag \\
 &&\quad \|Q^{(n)}_{\varepsilon_1} \cdots Q^{(n+k-1)}_{\varepsilon_k}  \tilde{y}\| \, > C r^{k-i}\|Q^{(n+k-i)}_{\varepsilon_{k-i+1}} \cdots Q^{(n+k-1)}_{\varepsilon_k}  \tilde{y}\|\, , \ i = 1, \ldots , k-1.
\end{eqnarray}

Denote by $H^{(n+k-i)}_j\in \RR^{ 1 \times |K|}$  the $j-$th row
of the matrix $T^{(n+k-i)}_{\varepsilon}$, $\varepsilon \in E$.
Define $y_0:=y$, the we have
\begin{equation}\label{Del1}
  T^{(n+k-1)}_{\varepsilon_k} y_0 \ = \left(\begin{array}{c} 0\\ \vdots \\ 0 \\ Q^{(n+k-1)}_{\varepsilon_{k}} \tilde{y}\end{array} \right) + \ \sum_{j=1}^{L} \langle H^{(n+k-1)}_j, y_0\rangle  e_j\, ,
\end{equation}
where $e_j$, $j=1, \ldots, L$, are the standard first $L$ unit
vectors of $\RR^{|K|}$ and $ \langle H^{(n+k-1)}_j, y_0\rangle$ is the scalar
product of the vectors $H^{(n+k-1)}_j$ and $y_0$. Define
$y_1:=\left(\begin{array}{cccc} 0 & \dots & 0 &
Q^{(n+k-1)}_{\varepsilon_{k}} \tilde{y} \end{array} \right)^T$.
Then, applying $T^{(n)}_{\varepsilon_1} \cdots
T^{(n+k-2)}_{\varepsilon_{k-1}}$ to both sides of \eqref{Del1}, we
get
$$
  T^{(n)}_{\varepsilon_1} \cdots T^{(n+k-2)}_{\varepsilon_{k-1}} y_1= T^{(n)}_{\varepsilon_1} \cdots T^{(n+k-1)}_{\varepsilon_k} y_0 \
  - \ \sum_{j=1}^{L} \langle H^{(n+k-1)}_j, y_0\rangle\, T^{(n)}_{\varepsilon_1} \cdots T^{(n+k-2)}_{\varepsilon_{k-1}} e_j\, ,
$$
and, thus, by triangle inequality,
$$
  \| T^{(n)}_{\varepsilon_1} \cdots T^{(n+k-1)}_{\varepsilon_k} y_0 \| \ge \| T^{(n)}_{\varepsilon_1} \cdots T^{(n+k-2)}_{\varepsilon_{k-1}} y_1\|
  - \ \sum_{j=1}^{L}| \langle H^{(n+k-1)}_j, y_0\rangle|\, \|T^{(n)}_{\varepsilon_1} \cdots T^{(n+k-2)}_{\varepsilon_{k-1}} e_j\|.
$$
Note that $n$ is such that, for any $n+k-i \ge n$, the matrix
$T^{(n+k-i)}_{\varepsilon}$ is bounded by the matrix~$R_{n+k-i}$, in the sense of
Lemma \ref{lemma:estimate_products_of_T}.
Then, due to the structure of $y_0$, we have $| \langle H^{(n+k-1)}_j, y_0\rangle| = {\cal O}( m^{-(\ell-j+1)(n+k-1)}\sigma_{n+k-1}) \|y_0
\|$. By Lemma \ref{lemma:estimate_products_of_T}, we also obtain
the estimate $\|T^{(n)}_{\varepsilon_1} \cdots
T^{(n+k-2)}_{\varepsilon_{k-1}} e_j \| = {\cal O}(m^{-(j-1)k})$,
$j=1, \ldots, L$. And, thus,
\begin{eqnarray*}
   \sum_{j=1}^{L}| \langle H^{(n+k-1)}_j, y_0\rangle|\, \|T^{(n)}_{\varepsilon_1} \cdots
   T^{(n+k-2)}_{\varepsilon_{k-1}} e_j\| & = &
   \sum_{j=1}^{L} {\cal O}( m^{-(\ell-j+1)(n+k-1)} \sigma_{n+k-1} m^{-(j-1)k}) \|y_0\|
   \\ &= &{\cal O}( m^{-\ell (n+k-1)} \sigma_{n+k-1}) \|y_0\|.
\end{eqnarray*}
The definition of $\sigma_k$ and the choice of $\beta$ yield ${\cal O}( m^{-\ell (n+k-1)} \sigma_{n+k-1})={\cal O}( \delta_{n+k-1}) <
\tilde{C} C_0 \beta^{n+k-1}$, $\tilde{C}>0$. Therefore,
$$
    \| T^{(n)}_{\varepsilon_1} \cdots T^{(n+k-1)}_{\varepsilon_k} y_0 \| \ge \| T^{(n)}_{\varepsilon_1} \cdots T^{(n+k-2)}_{\varepsilon_{k-1}} y_1\|
 - \tilde{C} C_0 \beta^{n+k-1} \|y_0\|.
$$
Set $y_i:=\left(\begin{array}{cccc} 0 & \dots & 0 &  Q^{(n+k-i)}_{\varepsilon_{k-i+1}} \cdots Q^{(n+k-1)}_{\varepsilon_{k}} \tilde{y} \end{array} \right)^T$, $i=2, \ldots, k$. Then, analogous successive argument for $\| T^{(n)}_{\varepsilon_1} \cdots T^{(n+k-i)}_{\varepsilon_{k-i+1}} y_{i-1}\|$,
$i=2, \ldots,k$, yields
$$
  \| T^{(n)}_{\varepsilon_1} \cdots T^{(n+k-1)}_{\varepsilon_k} y_0 \| \ge \|y_k\|- \tilde{C} C_0 \sum_{i=0}^{k-1} \beta^{n+k-i-1} \|y_i\|.
$$
From \eqref{aux_r} we get $\|y_i\|<r^{-k+i}C^{-1}\|y_k\|$, $i=0,
\ldots, k-1$, which implies
$$
  \| T^{(n)}_{\varepsilon_1} \cdots T^{(n+k-1)}_{\varepsilon_k} y_0 \| > \left(1-\frac{\tilde{C} C_0 \beta^{n-1}}{C}
  \sum_{i=0}^{k-1} \left(\frac{\beta}{r}\right)^{k-i} \right) \|y_k\| > \left(1-\frac{\tilde{C} C_0 \beta^{n-1}}{C (1-\frac{\beta}{r})} \right) \|y_k\|.
$$
In the second estimate above we used the fact that $\beta<r$.  Choose
$0<C_0<\frac{C(1-\frac{\beta}{r})}{\tilde{C} \beta^{n-1}}$ and define $C_1:=1-\frac{\tilde{C} C_0 \beta^{n-1}}{C (1-\frac{\beta}{r})}>0$.
Therefore, by \eqref{aux_r}, we have $\|y_k\|>C r^k \|y_0\|$ and, thus,
$$
  \| T^{(n)}_{\varepsilon_1} \cdots T^{(n+k-1)}_{\varepsilon_k} y_0 \| > C_1 \|y_k\| > C_1 C \, r^k \|y_0\|, \quad k \ge 1.
$$
Finally, this estimate and  \eqref{aux_v} yield $
 \|\Delta_{m^{-k}h}v(x)\| > C_1 C r^k m^{\ell k} \|y_0\|$, $k \ge 1$. Therefore, the H\"older exponents of all $D^\nu \phi_n$, $\nu \in \NN_0^s$,
$|\nu|=\ell$, are bounded from above by $-\ell-\log_m r$ and,
thus, $\alpha=\alpha_{\phi_n} \le -\log_m r$. Taking the limit as
$r$ goes to $\rho_{\mathbf a}$, we obtain the desired estimate
$\alpha \le -\log_m \rho_{\mathbf a}$.
\end{proof}

\noindent If the symbols of the scheme $\{ S_{{\mathbf a}^{(k)}}, \ k \ge 1\}$ satisfy
sum rules of order $\ell+1$, then we get the following immediate consequence of Theorem \ref{th:stable_case}.

\begin{corollary} \label{th:cor_stable_case}  Let $\ell \ge 0$. Assume  the
stationary scheme $S_{\mathbf{a}}$ is $C^\ell-$convergent with the
stable refinable basic limit function $\phi$ whose H\"older
exponent $ \alpha_\phi$ is $\ell \le \alpha_\phi<\ell+1$. If the
symbols of the scheme $\{ S_{{\mathbf a}^{(k)}}, \ k \ge 1\}$
satisfy sum rules of order $\ell+1$ and $\displaystyle \lim_{k
\rightarrow \infty} \mathbf{a}^{(k)}=\mathbf{a}$, then  $\{
S_{{\mathbf a}^{(k)}}, \ k \ge 1\}$ is $C^\ell-$convergent and the
H\"older exponent of its limit functions is also $\alpha_\phi$.
\end{corollary}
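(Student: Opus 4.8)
The plan is to obtain this corollary as a direct specialization of Theorem~\ref{th:stable_case}: the point is that (exact) sum rules of order $\ell+1$ are precisely approximate sum rules of the same order with identically vanishing defects, so that every hypothesis of Theorem~\ref{th:stable_case} is automatically fulfilled. No new estimate is needed; the analytic work has already been carried out in the proof of that theorem.

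First I would observe that if $a_*^{(k)}(z)$ satisfies sum rules of order $\ell+1$ (Definition~\ref{def:sumrules}) for every $k\ge1$, then $a_*^{(k)}(1)=m^s$, so that no rescaling of the masks is required and $\mu_k=0$ for all $k$; moreover $D^\eta a_*^{(k)}(\epsilon)=0$ for all multi-indices $|\eta|\le\ell$ and all $\epsilon\in\Xi\setminus\{1\}$, so that the sum rule defects in \eqref{def:delta_k} satisfy $\delta_k=0$ for all $k\ge1$. Hence the two series $\sum_k\mu_k$ and $\sum_k m^{k\ell}\delta_k$ are trivially summable, i.e. $\{a_*^{(k)}(z),\ k\ge1\}$ satisfies approximate sum rules of order $\ell+1$ in the sense of Definition~\ref{def:approx_sum_rules}, and in addition $\limsup_{k\to\infty}\delta_k^{1/k}=0$.

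Next I would note that, since $S_{\mathbf a}$ is $C^\ell$-convergent with a stable refinable basic limit function $\phi$ whose H\"older exponent $\alpha_\phi$ satisfies $\ell\le\alpha_\phi<\ell+1$, the stationary theory gives $\rho_{\mathbf a}=\rho(\{T_{\varepsilon,\mathbf a}|_{V_\ell},\ \varepsilon\in E\})=m^{-\alpha_\phi}>0$. Therefore $\limsup_{k\to\infty}\delta_k^{1/k}=0<\rho_{\mathbf a}$. Combining this with the hypothesis $\lim_{k\to\infty}\mathbf a^{(k)}=\mathbf a$, we see that all the assumptions of Theorem~\ref{th:stable_case} are met. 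That theorem then yields at once that $\{S_{\mathbf a^{(k)}},\ k\ge1\}$ is $C^\ell$-convergent and that the H\"older exponent of its limit functions equals $\alpha_\phi$, which is exactly the assertion.

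The only place calling for a word of care is the strict inequality $\limsup_{k\to\infty}\delta_k^{1/k}<\rho_{\mathbf a}$: it relies on $\rho_{\mathbf a}>0$, which in turn is guaranteed by the assumed $C^\ell$-convergence and stability of $S_{\mathbf a}$ (equivalently, by $\alpha_\phi<\infty$). Beyond this trivial check there is no obstacle, since the corollary follows from Theorem~\ref{th:stable_case} by pure specialization.
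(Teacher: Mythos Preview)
Your argument is correct and matches the paper's approach: the paper presents this corollary as an immediate consequence of Theorem~\ref{th:stable_case}, and you have simply spelled out why exact sum rules of order $\ell+1$ give $\mu_k=\delta_k=0$ and hence $\limsup_k\delta_k^{1/k}=0<\rho_{\mathbf a}$, so that all hypotheses of that theorem are met.
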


\subsection{Applications and examples} \label{subsec:convergence_examples}
\medskip\noindent In the this section, see \S
\ref{subsubsec:Daub_wavelets}, we prove the conjecture formulated
in \cite{DKLR}, which stipulates the H\"older regularity of the
generalized Daubechies wavelets. The proof of this conjecture is a
direct consequence of Theorem \ref{th:stable_case}. We also
determine the exact H\"older regularity of some of such
generalized Daubechies wavelets. Moreover, in \S
\ref{subsubsec:examples}, we illustrate our theoretical
convergence and H\"older regularity results with several
deliberately simple examples for which though neither the results
of \cite{ContiDynManniMazure13} nor the ones in \cite{DynLevin,
DynLevinYoon} are applicable.

Note that, in this section, we use the techniques from \cite{GugPro} that allow
for exact computation of the joint spectral radius of the corresponding matrix
sets. The method in \cite{GugPro} determines the so-called \emph{spectrum maximizing product}
of such sets, which yields the exact value of the joint spectral radius.

\begin{definition}
 Let $\cM$ be a compact collection of square matrices. The product $P:=M_1 \cdots M_m$, $M_j \in \cM$,
 is spectrum maximizing, if
 $\rho(\cM)=\rho(P)^{1/m}$, where $\rho(P)$ is the spectral radius of $P$.
\end{definition}

\subsubsection{Exact H\"older regularity of generalized Daubechies wavelets}
\label{subsubsec:Daub_wavelets}

\noindent The non-stationary Daubechies wavelets are defined and
studied in \cite{DKLR} and are obtained from Daubechies wavelets
in \cite{D} by suitable perturbation of the roots of the
stationary symbols. Let $n \ge 2$. To an arbitrary set $\Lambda_n:
= \{\lambda_0, \ldots , \lambda_{n-1}\}$ of real numbers
$\lambda_j$, $j=0, \ldots, n-1$, the authors in \cite{DKLR}
associate the generalized Daubechies wavelet
function~$\psi^{\Lambda_n}$. The corresponding refinable function
$$
 \phi^{\Lambda_n}:=\lim_{k \rightarrow \infty} S_{\mathbf{a}^{(k)}}
 S_{\mathbf{a}^{(k-1)}} \cdots S_{\mathbf{a}^{(1)}} \bdelta
$$
is the limit function of a non-stationary subdivision scheme
$\{S_{\mathbf{a}^{(k)}}, \ k \ge 1\}$ reproducing exponential
polynomials, i.e., solutions of the ODE of order $n$ with constant
coefficients and with spectrum $\Lambda_n$. The interested reader
can find more details on the construction and properties of these
wavelets~$\psi^{\Lambda_n}$, $n \ge 2$, in \cite{DKLR}.

\noindent Next we would like to mention the following two
properties of these masks $\{\mathbf{a}^{(k)}$, $k \ge 1\}$:

\begin{description}
\item[$(i)$] the sequence of masks $\{ \mathbf{a}^{(k)}, \ k \ge 1\}$
converges to the mask $\mathbf{m}_n$ of the classical $n-$th
Daubechies refinable function
$
 \varphi_n:=\lim_{k \rightarrow \infty} S_{\mathbf{m}_n}^k
\bdelta \, ;
$

\item[$(ii)$]  the corresponding  symbols $\{ a_*^{(k)}(z), \ k \ge 1\}$ satisfy
approximate sum rules of order $n$ with $\delta_k = {\cal
O}(2^{-nk}), \, k \ge 1$.
\end{description}

\smallskip \n In~\cite{DKLR} the authors estimated the H\"older exponent
of the generalized Daubechies wavelets and conjectured that it equals
to the H\"older exponent of the usual (stationary) Daubechies wavelets
(Conjecture~\ref{conj} stated in \S \ref{subsec:summary}). The following result proves this
conjecture.

\begin{theorem}\label{th50} Let $n\ge 2$. For every set
$\Lambda_n = \{\lambda_0, \ldots , \lambda_{n-1}\}$, the H\"older
regularity of the generalized Daubechies type wavelet
$\psi^{\Lambda_n}$ is equal to the H\"older regularity of the
classical Daubechies wavelet~$\psi_n$ derived from $\displaystyle
\varphi_n$.
\end{theorem}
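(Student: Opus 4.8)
The plan is to apply Theorem~\ref{th:stable_case} to the generalized Daubechies refinable function $\phi^{\Lambda_n}$ with the stationary reference mask $\mathbf{a}=\mathbf{m}_n$, and then to pass from the refinable function to the wavelet. First I would record the classical input. The $n$-th Daubechies refinable function $\varphi_n$ has orthonormal integer shifts and is therefore $\ell_\infty$-stable; the scheme $S_{\mathbf{m}_n}$ is $C^\ell$-convergent with $\ell=\lfloor\alpha_{\varphi_n}\rfloor$, so that $\ell\le\alpha_{\varphi_n}<\ell+1$; and, since $\mathbf{m}_n$ satisfies sum rules of order exactly $n$, the classical necessity of sum rules for smoothness forces $\varphi_n\notin C^n$, hence $\ell\le n-1$ and $\alpha_{\varphi_n}<n$. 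Since in the stable stationary case the H\"older exponent is $-\log_2\rho_{\mathbf a}$, we have $\rho_{\mathbf{m}_n}:=\rho(\{T_{\varepsilon,\mathbf{m}_n}|_{V_\ell}:\varepsilon\in E\})=2^{-\alpha_{\varphi_n}}$, so in particular $\rho_{\mathbf{m}_n}<2^{-\ell}$ and $\rho_{\mathbf{m}_n}>2^{-n}$.

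Next I would verify the hypotheses of Theorem~\ref{th:stable_case} for the non-stationary scheme $\{S_{\mathbf{a}^{(k)}}\}$ defining $\phi^{\Lambda_n}$. By property $(i)$, $\mathbf{a}^{(k)}\to\mathbf{m}_n$, so the set of limit points is the singleton $\cA=\{\mathbf{m}_n\}$ and $\lim_{k\to\infty}\mathbf{a}^{(k)}=\mathbf{m}_n$. By property $(ii)$, the symbols satisfy approximate sum rules of order $n$ with $\delta_k=\mathcal{O}(2^{-nk})$; because $\ell+1\le n$, these are in particular approximate sum rules of order $\ell+1$ (the order-$(\ell+1)$ defect is dominated by the order-$n$ defect and $\sum_k 2^{k\ell}\delta_k=\mathcal{O}(\sum_k 2^{k(\ell-n)})<\infty$), after the harmless rescaling making $\mu_k=0$. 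Finally $\limsup_{k\to\infty}\delta_k^{1/k}\le 2^{-n}<2^{-\alpha_{\varphi_n}}=\rho_{\mathbf{m}_n}$, which is exactly the remaining assumption. Theorem~\ref{th:stable_case} then gives that $\{S_{\mathbf{a}^{(k)}}\}$ is $C^\ell$-convergent and that the H\"older exponent of $\phi^{\Lambda_n}$ equals $\alpha_{\varphi_n}$.

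It remains to transfer this equality to the wavelet. By the standard MRA reconstruction relation, $\psi^{\Lambda_n}$ is a finite linear combination of integer shifts of a dilate of $\phi^{\Lambda_n}_{k+1}$, with coefficients given by a (level-dependent, finitely supported) high-pass mask $g^{(k)}$. By Lemma~\ref{lem:alpha_k=alpha_1}, which applies since $a^{(k)}(0)\neq0$ for $k$ large (as $\mathbf{a}^{(k)}\to\mathbf{m}_n$ and $m_n(0)\neq0$), all the $\phi^{\Lambda_n}_k$ share the H\"older exponent $\alpha_{\varphi_n}$; hence a nonzero finite linear combination of shifts of a dilate of $\phi^{\Lambda_n}_{k+1}$ is at least that regular, giving $\alpha_{\psi^{\Lambda_n}}\ge\alpha_{\varphi_n}$, while the matching upper bound $\alpha_{\psi^{\Lambda_n}}\le\alpha_{\varphi_n}$ is the classical fact that the wavelet and scaling function of such an MRA have the same H\"older exponent (seen e.g. from $|\widehat{\psi^{\Lambda_n}}(\omega)|\asymp|\widehat{\phi^{\Lambda_n}}(\omega/2)|$ for $|\omega|$ large). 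Since the identical reasoning in the stationary case gives $\alpha_{\psi_n}=\alpha_{\varphi_n}$, we conclude $\alpha_{\psi^{\Lambda_n}}=\alpha_{\varphi_n}=\alpha_{\psi_n}$, which is the assertion.

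The one step I expect to require real care, as opposed to bookkeeping, is this last transfer: establishing, uniformly in $\Lambda_n$, that $\psi^{\Lambda_n}$ inherits the \emph{exact} H\"older exponent of $\phi^{\Lambda_n}$. The safest route is to reduce it to the corresponding classical (stationary) statement for Daubechies wavelets, exploiting the convergence $g^{(k)}\to g$ of the high-pass masks together with the regularity of $\phi^{\Lambda_n}$ already obtained from Theorem~\ref{th:stable_case}; everything else is a direct application of Theorem~\ref{th:stable_case} once properties $(i)$ and $(ii)$ of the generalized Daubechies masks are in hand.
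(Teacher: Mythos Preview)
Your proposal is correct and follows essentially the same route as the paper: invoke Theorem~\ref{th:stable_case} with $\mathbf{a}=\mathbf{m}_n$, using properties $(i)$ and $(ii)$ to verify its hypotheses, and then pass from the refinable function to the wavelet. Your choice $\ell=\lfloor\alpha_{\varphi_n}\rfloor$ is in fact the correct one for the statement of Theorem~\ref{th:stable_case}; the paper's phrasing ``$\ell=n-1$'' refers to the order of the approximate sum rules rather than to the parameter in the hypothesis $\ell\le\alpha_\phi<\ell+1$.

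The only substantive difference is emphasis. You flag the transfer $\alpha_{\psi^{\Lambda_n}}=\alpha_{\phi^{\Lambda_n}}$ as the delicate step and sketch a route via Lemma~\ref{lem:alpha_k=alpha_1} and the high-pass relation; the paper dispatches it in a single sentence (``a compactly supported wavelet function has the same regularity as the corresponding refinable function''), treating it as a standard fact from the wavelet literature. Your caution is not misplaced in a non-stationary context, but for the generalized Daubechies construction the wavelet is a \emph{finite} combination of shifts of a single $\phi_{k+1}$ with a nonvanishing high-pass mask, so the equality of H\"older exponents is indeed routine and does not require the machinery you anticipate. The core of the argument---and the genuinely new content---is exactly the application of Theorem~\ref{th:stable_case}, which you carry out correctly.
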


\begin{proof} We invoke  Theorem \ref{th:stable_case}.
Since a  compactly supported wavelet function has the same regularity as
the corresponding refinable function, we need to show that the
functions $\phi^{\Lambda_n}$ and $\varphi_{n}$ have the same
regularity. The non-stationary subdivision scheme
$\{S_{\mathbf{a}^{(k)}}, \ k \ge 1\}$ generating
$\phi^{\Lambda_n}$ satisfies the assumptions of Theorem
\ref{th:main_Hoelder} with $\ell=n-1$ and $\cA=\{\mathbf{m}_n\}$.
Indeed, the masks of the scheme  $\{S_{\mathbf{a}^{(k)}}, \ k \ge
1\}$ are constructed in \cite{DKLR} in such a way that they
converge to the mask $\mathbf{m}_n$. The Daubechies refinable
function $\varphi_n$ is stable and, hence, its H\"older exponent is $\alpha_{\varphi_n} = - \log_2 \rho_\cA$. It is well-known that
$\alpha_{\varphi_n} < n$, therefore $\rho_\cA > 2^{-n}$. Thus, by
$(ii)$ we have  $\displaystyle \limsup_{k \to \infty}
\delta_k^{1/k} \le 2^{-n} < \rho_\cA$. Therefore, all assumptions
of Theorem \ref{th:stable_case} are satisfied and the H\"older
exponent $\alpha$ of $\phi^{\Lambda_n}$ satisfies $\alpha =
\alpha_{\varphi_n}=- \log_2 \rho_{\cA}$.
\end{proof}

In~\cite{DKLR} the H\"older exponent $\alpha$  is estimated by the
rate of decay of the Fourier transform
$\hat{\phi}^{\Lambda_n}$ of $\phi^{\Lambda_n}$. It is well-known
that for any continuous, compactly supported function $f$, its
H\"older exponent $\alpha_f$ satisfies
$$
 \eta(f) -1 \le \alpha_f \le \eta(f), \quad \eta(f) = \sup\, \bigl\{\beta \ge 0 \
 : \ |{\widehat f}(\omega)|
\le C(1 + |\omega|)^{-\beta}  \, , \  \omega \in \RR\, \bigr\},
$$
and this gap of length~$1$ is, in general, unavoidable~\cite{Z}.
In \cite[Theorem 29]{DKLR} the authors show that
$\eta(\phi^{\Lambda_n}) \ge \eta(\varphi_{n})$, which, thus,
implies the following lower bound for the H\"older exponent
$\alpha$ of $\phi^{\Lambda_n}$
$
 \alpha \, \ge \, \eta(\varphi_{n}) -1.
$
Using lower bounds for the values $\eta(\varphi_{n})$ known from
the literature, one can estimate the regularity of the generalized
Daubechies wavelets. Table in \eqref{tab1} compares those rough
bounds given in~\cite{D} (computed  by the method of invariant
cycles) with the exact values of $\alpha= -\log_2 \rho_\cA$, which
we compute using the techniques in \cite{GugPro}.

\begin{equation}
\begin{array}{|c|c|c}
n&  \eta(\varphi_{n}) - 1 & \alpha=-\log_2 \rho_\cA \\
\hline
2   & 0.339 & 0.5500 \\
3   & 0.636 & 1.0878 \\
4   & 0.913 & 1.6179 \\
5   & 1.177 & 1.9690 \\
6   & 1.432 & 2.1891 \\
7   & 1.682 & 2.4604 \\
8   & 1.927 & 2.7608 \\
9   & 2.168 & 3.0736 \\
10  & 2.406 & 3.3614 \\
\end{array}
\label{tab1}
\end{equation}

\subsubsection{Further examples} \label{subsubsec:examples}

In this subsection we apply our convergent and regularity results
to several deliberately simple non-stationary subdivision schemes
whose analysis was impossible so far. These examples are constructed only for
illustration purposes.

\smallskip {\bf Example 2:} We start with a non-stationary subdivision scheme with
a general dilation matrix $M$ and masks which are level dependent
convex combination of two multivariate masks
$\mathbf{a},\mathbf{b} \in \ell_0(\ZZ^s)$. We assume that
$\mathbf{a}$ defines a (stationary) convergent subdivision scheme
and that $\mathbf{b}$ satisfies sum rules of order $1$. Convex
combinations of such subdivision masks were also investigated in
\cite{CharinaContiJetterZimm11,Conti08}. In particular, we define the
non-stationary subdivision scheme $\{S_{\mathbf{a}^{(k)}}, \ k \ge
1 \}$  by
\begin{equation}\label{es:1}
 \mathbf{a}^{(k)}:=\left(1-\frac{1}{k}\right)\mathbf{a}+\frac{1}{k} \mathbf{b}, \quad k \ge 1.
\end{equation}
This non-stationary scheme does not satisfy
the condition in \eqref{def:AsEq} for $\ell=0$, since $|\ra^{(k)}(\alpha)-\ra(\alpha)| =
  |\rb(\alpha)-\ra(\alpha)| \frac{1}{k}$,
$$
 \sum_{k \in \NN}\max_{\varepsilon \in E}
  \left\{ \sum_{\alpha \in \ZZ^s} |\ra^{(k)}(\varepsilon+M\alpha)-\ra(\varepsilon+M\alpha)| \right\}\nless \infty
$$
Nevertheless, $\{S_{\mathbf{a}^{(k)}}, \ k \ge 1 \}$ satisfies the assumptions
of Theorem \ref{teo:convergence}, since, by construction, all symbols satisfy
approximate sum rules of order $1$ and $\displaystyle \lim_{k \rightarrow
\infty} \mathbf{a}^{(k)}=\mathbf{a}$. Therefore, we are able to conclude that
the scheme is at least $C^0-$convergent. Moreover, in the case $M=mI$, the
assumptions that $S_\ab$ is $C^\ell$-convergent and that $\mathbf{b}$ satisfies
sum rules of order $\ell+1$, imply, by Theorem \ref{th:main_Hoelder}, that the
H\"older regularity of the scheme in (\ref{es:1}) is at least as high as for
$S_\ab$. Indeed, for $s=2$ and $M=2I$, let $\mathbf{a}$ be the mask of the
butterfly scheme (\cite{GregoryDynLevin} with $\omega=1/16$) and $\mathbf{b}$ be the mask of
the Courant element, the box spline  $B_{111}$. Then, using the method in \cite{GugPro}, we compute
$\rho(\cT_{\mathbf{a}}|_{V_1})=1/4$ and, thus, the scheme $\{S_{\mathbf{a}^{(k)}}, \ k \ge 1 \}$
is $C^1-$convergent and its H\"older exponent is $\alpha
= 2$.

\smallskip \noindent In the next example we construct non-stationary schemes
with sets of limit points $\cA$ of cardinality $2$.

\smallskip  {\bf Example 3:}  Let
$\cal I\subset \NN$ be some infinite set, such that $\NN \setminus {\cal I}$ is also infinite. We
consider the non-stationary scheme with the masks
\begin{equation}\label{es:2}
 \ab^{(k)}:=\left\{
             \begin{array}{ll}
               \mathbf{a},\ & k\in {\cal I}, \\
                \cb,\ & k\in  \NN\setminus {\cal I},
             \end{array}
           \right. \quad k \ge 1.
\end{equation}
We assume that the masks $\mathbf{a},\ \cb\in \ell_0(\ZZ^s)$
define stationary convergent subdivision schemes with the same dilation matrix
$M$. Moreover, we assume that $\rho\left(\cT_\cA|_{V_0}\right)<1$,
$\cA=\{\mathbf{a}, \mathbf{c}\}$. Here the
notion of asymptotic equivalence is not applicable, but Theorem
\ref{teo:convergence}   allows us to
establish $C^0-$convergence of the scheme in \eqref{es:2}. If $M=mI$ and
$\mathbf{a},\ \cb$ are such that
$\rho\left(\cT_\cA|_{V_\ell}\right)<m^{-\ell}$,
Theorem \ref{th:main_Hoelder} also yields a lower bound for the H\"older regularity of
$\{S_{\mathbf{a}^{(k)}}, \ k \ge 1\}$.

\smallskip \noindent  For example, for $s=1$ and $M=2$,  let
$$
a_*(z):=\frac18(1+z)^4 \quad \mbox{and} \quad
c_*(z):=\frac1{16}(-1+9z^2+16z^3+9z^4-z^6), \quad z \in \CC \setminus 0,
$$
be the symbols of the cubic B-spline and the $4$-point scheme
(\cite{DesDubuc} with $\omega=1/16$), respectively. Using the method in
\cite{GugPro} we obtain
$\rho(\cT_{\cA}|_{V_1})=0.35385...$. This tells us that the
corresponding scheme $\{S_{\mathbf{a}^{(k)}}, \ k \ge 1\}$ has the
H\"older exponent $\alpha \ge 1.49876...$. For the
computation of $\rho(\cT_{\cA}|_{V_1})$, we used the set
$\cT_{\cA}|_{V_1}=\{T_1:=T_{0,\mathbf{c}}|_{V_1}, T_2 :=T_{1,\mathbf{c}}|_{V_1},T_3 :=T_{0,\mathbf{a}}|_{V_1},T_4:=T_{1,\mathbf{a}}|_{V_1} \}$ with
\begin{eqnarray*}
&& \hskip -0.7cm T_1= \left(
\begin{array}{rrrr}
      \frac{1}{8}  &  \frac{1}{8} &        0 &        0 \\
     -\frac{1}{16}  &  \frac{3}{8} &  -\frac{1}{16} &        0 \\
           0  &  \frac{1}{8} &   \frac{1}{8} &        0 \\
           0  & -\frac{1}{16} &   \frac{3}{8} &  -\frac{1}{16}
             \end{array} \right), \quad
   T_2= \left( \begin{array}{rrrr}
     -\frac{1}{16}  &  \frac{3}{8} &  -\frac{1}{16} &        0 \\
           0  &  \frac{1}{8} &   \frac{1}{8} &        0 \\
           0  & -\frac{1}{16} &   \frac{3}{8} &  -\frac{1}{16} \\
           0  &       0 &   \frac{1}{8} &   \frac{1}{8}
             \end{array} \right),
\\
&& \hskip -0.7cm T_3= \left(
\begin{array}{rrrr}
      \frac{1}{8}  &       0 &        0 &        0 \\
      \frac{1}{8}  &  \frac{1}{4} &   \frac{1}{8} &        0 \\
           0  &       0 &   \frac{1}{8} &   \frac{1}{4} \\
           0  &       0 &        0 &        0
             \end{array} \right), \quad
   T_4= \left( \begin{array}{rrrr}
      \frac{1}{4} &   \frac{1}{8} &        0 &        0 \\
           0 &   \frac{1}{8} &   \frac{1}{4} &   \frac{1}{8} \\
           0 &        0 &        0 &   \frac{1}{8} \\
           0 &        0 &        0 &        0
             \end{array} \right).
\end{eqnarray*}
\normalsize The spectrum maximizing product we obtain is $T_1 (T_1
T_3)^{13}$. If, instead of the mask $\mathbf{a}$ above, we take  the mask of the quadratic B-spline, then we
obtain $\rho(\cT_{\cA}|_{V_1})=0.35045...$, which tells us
that the corresponding scheme $\{S_{\mathbf{a}^{(k)}}, \ k \ge
1\}$ has H\"older exponent $\alpha \ge 1.51271...$. For the
computation of $\rho(\cT_{\cA}|_{V_1})$ we used the set
$\cT_{\cA}|_{V_1}=\{T_1, T_2, T_5,T_6\}$ with
\begin{eqnarray*}
&& T_5: = T_{0,\mathbf{a}}|_{V_1}=\left( \begin{array}{rrrr}
      \frac{1}{4}  &       0 &        0 &        0 \\
           0  &  \frac{1}{4} &   \frac{1}{4} &        0 \\
           0  &       0 &        0 &   \frac{1}{4} \\
           0  &       0 &        0 &        0 \\
             \end{array} \right), \qquad
   T_6 := T_{1,\mathbf{a}}|_{V_1}=\left( \begin{array}{rrrr}
      \frac{1}{4}  &  \frac{1}{4} &        0 &        0 \\
           0  &       0 &   \frac{1}{4} &   \frac{1}{4} \\
           0  &       0 &        0 &        0 \\
           0  &       0 &        0 &        0
             \end{array} \right).
\end{eqnarray*}
\normalsize The spectrum maximizing product is $T_1 (T_1 T_5)^{2}$.
Note that $\rho(\cT_{\cA}|_{V_1})$ can be bigger than either $\rho(\cT_{\ab}|_{V_1})$
or $\rho(\cT_{\mathbf{c}}|_{V_1})$. It is also of interest that  $\rho(\cT_{\cA}|_{V_1})$
decreases, if we replace the mask of the cubic B-spline by the mask of the less regular scheme
corresponding to the quadratic B-spline.

\smallskip \noindent The next example studies a univariate ternary ($M=3$)
non-stationary scheme.

\smallskip  {\bf Example 4:} We consider the
alternating sequence of symbols
\begin{equation}\label{es:3}
 a_*^{(k)}(z):=\left\{
             \begin{array}{ll}
               c_*^{(k)}(z):=z^{-6} K_1^{(k)} \  (z^2 + z + 1)^2 (z + 1)
{\tilde c}_*^{(k)}(z), & k\ \ \hbox{even,} \\ \\
              d_*^{(k)}(z):= z^{-6} K_2^{(k)} \  (z^2 + z + 1)^2 (z + 1)
{\tilde d}_*^{(k)}(z), & k\ \ \hbox{odd,}
             \end{array} \right. \quad k\ge 1\,,
\end{equation}
where  $K_1^{(k)}$ and $K_2^{(k)}$ are suitable normalization
constants and the factors ${\tilde c}_*^{(k)}(z)$ and ${\tilde
d}_*^{(k)}(z)$ are defined by
$$\begin{array}{ll}{\tilde c}_*^{(k)}(z):=&
 \Big( z^4 + (4(w^{(k)})^2 - 2)z^3 + (16(w^{(k)})^4 - 16(w^{(k)})^2 + 3)z^2 +
(4(w^{(k)})^2 - 2)z + 1 \Big) \cdot \\
&\Big( (16(w^{(k)})^4 + 16(w^{(k)})^3 + 3)z^2 + (- 64(w^{(k)})^6 - 64(w^{(k)})^5 +
32(w^{(k)})^4 + \\
&\quad 32(w^{(k)})^3 - 12(w^{(k)})^2 - 12w^{(k)} - 6)z + 16(w^{(k)})^4 +
16(w^{(k)})^3 + 3 \Big), \end{array}$$ and
$${\tilde d}_*^{(k)}(z):=
 \Big(z^2 + z + 1 \Big) \\
\Big( w^{(k)}z^4 + 2w^{(k)}z^3 + (4(w^{(k)})^2-1)z^2 +2w^{(k)}z + 1 \Big),
$$
with $w^{(k)}:=\frac{1}{2}\left( e^{3^{-(k+1)}\lambda
/2}+e^{-3^{-(k+1)}\lambda /2}\right)$ and $\lambda\in \RR^+\cup
i\RR^+$.

\smallskip \noindent
The corresponding  non-stationary subdivision scheme $\{S_{{\bc}^{(k)}},\ k\ge
1\}$ was considered in \cite{CharinaContiRomani13,ContiRomani13}.
In \cite{ContiRomani13}, the authors investigate the convergence
of the sequence of symbols $\{{c}_*^{(k)}(z),\ k\ge 1\}$ to the
symbol
$$
{c}_*(z)=-z^{-6}\frac{1}{1296}(z^2 + z + 1)^4(z + 1)(35z^2 - 94z
+35)
$$
of the ternary dual stationary 4-point Dubuc-Deslaurier scheme,
which is known to be at least $C^2$-convergent. The sequence of the symbols of
the non-stationary subdivision scheme $\{S_{{\bd}^{(k)}},\ k\ge
1\}$ converges to the symbol
$$
{d}_*(z)=-z^{-6}\frac{1}{162}(z^2 + z + 1)^5(z + 1),
$$
see
\cite{CharinaContiRomani13}. The stationary scheme $S_\bd$ is
known to be at least $C^2-$convergent. We would like to remark
that $\{S_{{ \bc}^{(k)}},\ k\ge 1\}$ and $\{S_{{ \bd}^{(k)}},\
k\ge 1\}$ are both schemes generating/reproducing certain spaces of
exponential polynomials, see \cite{ContiRomani11}.

Using Theorem \ref{th:main_Hoelder} with $\cA=\{\cb, \db\}$ and the method in
\cite{GugPro}, we determine a  lower bound for the H\"older regularity of the
scheme $\{S_{\mathbf{a}^{(k)}}, \ k \ge 1\}$. Since we get
$\rho({\cT}_{\cA}|_{V_2})=0.04958...$, the corresponding H\"older
exponent satisfies $\alpha\ge 2.73437...$. For the computation of
$\rho({\cT}_{\cA}|_{V_2})$ we use the set
$\cT_{\cA}=\{T_1,T_2,T_3,T_4,T_5,T_6\}$ with $T_j:=1296\,T_{j-1,\mathbf{c}}|_{V_2}$,
$j=1,2,3$,
\begin{eqnarray*}
 \scriptsize T_1=\left(
\begin{array}{rrrr}
      35 & 0 & 0 \\
      -83  & -83 &  -24  \\
       0  &   35 &  -24  \\
             \end{array} \right), \
   T_2=\left( \begin{array}{rrrr}
      -24 & 35 & 0 \\
      -24 & -83  & -83  \\
       0& 0  &   35    \\
             \end{array} \right), \
   T_3=\left( \begin{array}{rrrr}
      -83&-24 & 35    \\
      35& -24 & -83    \\
      0&  0& 0
             \end{array} \right),
\end{eqnarray*}
and with $T_j:=T_{j-4,\mathbf{d}}|_{V_2}$, $j=4,5,6$,
\begin{eqnarray*} \scriptsize
T_4=\frac{1}{162}\left( \begin{array}{rrrr}
      1 & 0 & 0  \\
      5  & 5 &  3  \\
      0  &  1 & 3
  \end{array} \right), \quad
   T_5=\frac{1}{162}\left( \begin{array}{rrrr}
      3 & 1 & 0   \\
      3 & 5  & 5  \\
      0& 0  &  0
             \end{array} \right), \quad
   T_6=\frac{1}{162}\left( \begin{array}{rrrr}
      5& 3 & 1  \\
      1& 3 & 5   \\
      0&  0& 0
             \end{array} \right).
\end{eqnarray*}

The spectrum maximizing product is $T_1 T_3$, which implies that the H\"older
exponent $\alpha$ coincides with the H\"older exponent of the scheme $S_\cb$.

\smallskip \n In the next two examples, we construct and analyze the regularity of
a univariate and a multivariate non-stationary
subdivision schemes obtained by
suitable perturbations of the
masks of the known stationary subdivision schemes.
These non-stationary schemes are not asymptotically equivalent to any stationary
scheme and, thus, the results of \cite{DynLevin} are not applicable. Note though
that these schemes satisfy approximate sum rules of order $2$ and the other
assumptions of Theorem \ref{th:main_Hoelder}.

\smallskip {\bf Example 5:} For $s=1$ and
$M=2$, we  consider the sequence of masks
$\{\mathbf{a}^{(k)},\ k\ge 1\}$ with
\begin{equation}\label{eq:chaikinter}
\mathbf{a}^{(k)}:=\left\{\left(\frac14 -\frac1{k}\right),\
\left(\frac34 -\frac1{k}+2^{-2k}\right),\ \left(\frac34
+\frac1{k}\right),\ \left(\frac14
+\frac1{k}+2^{-2k}\right)\right\},\quad k\ge 1\,.
\end{equation}
Obviously, $\displaystyle \lim_{k \rightarrow \infty}
\mathbf{a}^{(k)}=\mathbf{a}$, where $\mathbf{a}=\left\{\frac14,\
\frac34,\ \frac34,\, \frac14 \right\}$ is the mask of the Chaikin
subdivision scheme \cite{Chaikin}.  It is easy to check that the symbols of this
non-stationary scheme satisfy
$$
 a_*^{(k)}(1)-2=2^{-2k+1}, \quad a_*^{(k)}(-1)=-2^{-2k+1}\quad \hbox{and} \quad
 D a_*^{(k)}(-1)= 2^{-2k+2}, \quad  k\ge 1\,,
$$
i.e. $\mu_k=\delta_k=2^{-2k+1}$ and, thus, the symbols satisfy
approximate sum rules of order $2$. To be able to apply Theorem
\ref{th:main_Hoelder}, we need to rescale the masks
$\mathbf{a}^{(k)}$ so that $\mu_k=0$, $k\ge 1$. It is easily done
by multiplying each of the masks $\mathbf{a}^{(k)}$ by the factor
$2/(2+\mu_k)$. After this modification the sequence $\{\delta_k, \
k \ge 1\}$ is still summable, since
$\displaystyle{
 \sum_{k \in \NN} \frac{2 \delta_k}{2+\mu_k} < \sum_{k \in \NN}
 \delta_k< \infty}.
$
Hence, by Theorem \ref{th:main_Hoelder} and the known fact that
$\rho(\cT_\ab|_{V_1})=\frac14$, the non-stationary scheme with masks in
\eqref{eq:chaikinter} is $C^1-$convergent with $\alpha = 2$.

\smallskip {\bf Example 6:} For
$s=2$ and $M=2I$, we consider the sequence of masks
$\{\mathbf{a}^{(k)},\ k\ge 1\}$ with for $k\ge1$
\begin{equation}\label{es:4}
\mathbf{a}^{(k)}=\frac1{16}\left(
             \begin{array}{ccccc}
               0 & 2^{-2k} & 1+\frac1k & 2-\frac1k & 1-\frac1k \\
               -\frac1k & 2-\frac1k+2^{-2k} & 6+\frac1{k}+2^{-2k} & 6+\frac1k+2^{-2k} & 2+2^{-2k} \\
               1-\frac1{k} & 6+\frac1k & 10+\frac2k & 6+\frac1k & 1-\frac1{k} \\
              2+ 2^{-k} & 6+\frac1{k}+2^{-2k} & 6+\frac1k+2^{-2k} & 2-\frac1{k}+2^{-k} & -\frac1k \\
               1-\frac1k & 2-\frac1k & 1+\frac1k & 2^{-2k} & 0 \\
             \end{array}
           \right)
\end{equation}
Obviously, $\displaystyle \lim_{k \rightarrow \infty}
\mathbf{a}^{(k)}=\mathbf{a}$, where $\mathbf{a}$ is the mask of the Loop
subdivision scheme \cite{Loop}.  Note that the symbols of this non-stationary scheme satisfy
approximate sum rules of order $2$, since, we have $\mu_k=5\cdot 2^{-(2k+4)}$
and $\delta_k=6\cdot 2^{-(2k+4)}$, $k \ge 1$. It is well-known
that $\rho(\cT_\ab|_{V_1 })=\frac14$. Thus, after an appropriate normalization
of the masks, by Theorem \ref{th:main_Hoelder}, we get that the non-stationary scheme is
$C^1-$convergent with the H\"older exponent $\alpha= 2$.

\section{Further properties} \label{sec:necessary}

In this subsection, we prove Theorem \ref{th:neccesary_conditions}
stated in Subsection \ref{subsec:summary}. Its proof is based on the next
Proposition \ref{p30} that studies the infinite products of
certain trigonometric polynomials. The statement of Proposition
\ref{p30} involves the following concepts.

\begin{definition} \label{def:simmetric_roots}
A pair of complex numbers $\{z, -z\}$ is called {\em a pair of
symmetric roots} of the algebraic polynomial $q$, if $q(z) = q(-z)
= 0$.
\end{definition}

Let $\{ q_k, \ k \ge 1\}$ be a sequence of algebraic polynomials
of degree~$N$ and define the function
\begin{equation}\label{prod1}
f(x):= \ \prod_{k=1}^{\infty} p_k(2^{-k}x)\, , \quad
p_k(x):=q_k(e^{-2\pi i x}), \quad  x \in \mathbb{R}\, .
\end{equation}
By \cite{CollelaH}, if a sequence of trigonometric polynomials
$\{p_k, \ k \ge 1 \}$ is bounded, then this infinite product
converges uniformly on each compact subset of~$\mathbb{R}$, and
hence, $f$ is analytic. Possible rates of decay of such functions as $x \to \infty$ was studied in~\cite{P1}.

\begin{proposition}\label{p30}
Assume that the sequence of trigonometric polynomials $\{p_k, \ k
\ge 1\}$ with $p_k(0)=1$, $k \ge 1$,  converges to a trigonometric
polynomial~$p$ that has no symmetric roots on $\RR$. If the
function $f$ in \eqref{prod1} satisfies $f(x) = o(x^{-\ell})$ for
$\ell \ge 0$ and $x \to +\infty$, then
$\delta_{k} =
o(2^{-\ell k})$ as $k \to \infty$, where
$$
 \delta_k=\max_{j=0, \ldots, \ell} 2^{-jk} \frac{|D^j p_k(1/2)|}{j!}, \quad k \ge 1.
$$
\end{proposition}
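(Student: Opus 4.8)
The plan is to recover the Taylor coefficients of $p_n$ at the point $1/2$ — equivalently, of $q_n$ at $z=-1$ — from the decay of the infinite product $f$ in \eqref{prod1}, by evaluating $f$ near a carefully chosen large dyadic point that isolates the $n$-th factor. First I would record the standing facts. Since the $q_k$ have degree at most $N$ and their coefficients converge to those of $q$, the $q_k$ are uniformly bounded on every fixed disk; hence $|p_k(\tfrac12+u)|\le C_0$ for all $k$ and all complex $|u|\le1$, so by Cauchy's estimates the Taylor coefficients $c^{(k)}_j:=D^jp_k(1/2)/j!$ satisfy $|c^{(k)}_j|\le C_0$ uniformly in $j,k$. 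Each $p_k$ is $1$-periodic with $p_k(m)=p_k(0)=1$ for $m\in\ZZ$, and by \cite{CollelaH} the product converges locally uniformly, so $f$ is entire with $f(0)=1$; in particular $f$ does not vanish on some fixed interval $[0,R_0]$, $R_0>0$.

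Next I would fix the base point. Let $Z\subset\RR$ be the (finite, $1$-periodic) zero set of $p$. I would pick an odd integer $2m+1$ such that, writing $\xi_j$ for the fractional part of $(2m+1)2^{-1-j}$, one has $\xi_j\notin Z$ for every $j\ge1$. Such $m$ exists: in the binary preimage tree of $1/2$ under $x\mapsto 2x\bmod1$ each node has exactly two children, which differ by $1/2$ modulo $1$; since $p$ has no symmetric roots on $\RR$, \emph{at most one} child lies in $Z$, so one can descend the tree staying outside $Z$, and past the finitely many levels carrying a dyadic zero of $p$ one may simply keep halving, so the chosen branch is eventually of the form $(2m+1)2^{-1-j}$. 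With this $m$ the tail product $\prod_{j\ge1}p(\xi_j)$ is nonzero. Now for $s\in[0,R_0]$ and $n$ large, using $1$-periodicity of the $p_k$ and $2^{n-1-k}(2m+1)\in\ZZ$ for $k\le n-1$,
\begin{equation*}
 f\bigl(2^{n-1}(2m+1)+s\bigr)=\Bigl(\prod_{k=1}^{n-1}p_k(2^{-k}s)\Bigr)\,p_n\bigl(\tfrac12+2^{-n}s\bigr)\,\Bigl(\prod_{j=1}^{\infty}p_{n+j}\bigl(\xi_j+2^{-n-j}s\bigr)\Bigr).
\end{equation*}
As $n\to\infty$ the first bracket tends to $f(s)\ne0$ and the third to $\prod_{j\ge1}p(\xi_j)\ne0$, both uniformly on $[0,R_0]$; hence for $n$ large both brackets are bounded above and below by positive constants, uniformly in $s$.

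Finally I would read off the conclusion. From $f(x)=o(x^{-\ell})$ and $2^{n-1}(2m+1)+s\ge 2^{n-1}(2m+1)$ one gets $\sup_{s\in[0,R_0]}|f(2^{n-1}(2m+1)+s)|=o(2^{-\ell n})$, so dividing by the two bounded-away-from-zero brackets gives $\sup_{s\in[0,R_0]}|p_n(\tfrac12+2^{-n}s)|=o(2^{-\ell n})$. Expanding $p_n(\tfrac12+u)=\sum_{j\ge0}c^{(n)}_ju^j$ and using $|c^{(n)}_j|\le C_0$, the tail $\sum_{j>\ell}c^{(n)}_j(2^{-n}s)^j$ is $O(2^{-(\ell+1)n})$, negligible against $o(2^{-\ell n})$; evaluating at $\ell+1$ distinct nodes $s_0,\dots,s_\ell\in[0,R_0]$ and inverting the resulting Vandermonde system (whose inverse has entries depending only on the $s_i$, hence only on $\ell$) yields $2^{-jn}|c^{(n)}_j|=2^{-jn}|D^jp_n(1/2)|/j!=o(2^{-\ell n})$ for each $j=0,\dots,\ell$. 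Taking the maximum over $j$ gives exactly $\delta_n=o(2^{-\ell n})$.

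I expect the genuinely delicate point to be the middle step — the choice of the base point, i.e.\ the non-vanishing of the tail product $\prod_{j\ge1}p(\xi_j)$ — which is precisely where the ``no symmetric roots on $\RR$'' hypothesis is indispensable: without it the infinite product could cancel along every descending branch of the tree, and the localization identity would then carry no information about $p_n$. The remaining ingredients — the periodicity bookkeeping, the uniform bounds on the $q_k$, and the Vandermonde inversion — are routine.
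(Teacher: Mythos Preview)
Your proof is correct and follows essentially the same route as the paper's: both factorize $f$ at $2^{n-1}d+s$ for a carefully chosen odd $d$ via the same binary-tree descent (the no-symmetric-roots hypothesis guaranteeing that at each node at least one child is not a zero of $p$), bound the head and tail factors away from zero, and then extract the Taylor coefficients of $p_n$ at $1/2$ from the resulting uniform $o(2^{-\ell n})$ bound on $p_n(\tfrac12+2^{-n}s)$. The only cosmetic differences are that the paper splits the tail into two pieces---handling the finite ``dangerous'' stretch by continuity and the far tail by an a~priori lower bound of the form~\eqref{sigma} rather than your limiting argument $\prod_j p_{n+j}(\xi_j+2^{-n-j}s)\to\prod_j p(\xi_j)$---and that it recovers the coefficients via Taylor's formula with Lagrange remainder together with norm equivalence on $\Pi_\ell$, where you use Vandermonde inversion at $\ell+1$ nodes.
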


\begin{proof} By assumption $f(x) = o(x^{-\ell})$ for points of the form~$x = 2^{k-1} d + t$,
where $d$ is a fixed natural number, $t$ is an arbitrary number from~$[0, \sigma]$, $\sigma >0$,
 and $k \to \infty$.
Next, we choose these parameters $d \in \mathbb{N}$ and $\sigma > 0$ in a special way.\\
Firstly, we define $\sigma$. Since $\{p_k, \ k \ge 1\}$ converges
to~$p$, the sequence~$\{p_k, \ k \ge 1\}$ is bounded. Moreover,
$p_k(0)=1$, $k \ge 1$, implies that $f(0)=1$. This implies that
there are $\sigma \in (0, 1)$ and $C_0 > 0$ such that  for every
$r \ge 0$ and $R \in \mathbb{N}\cup \{\infty\}$ we have
\begin{equation}\label{sigma}
\Bigl|\, \prod_{j=1}^{R} p_{j+r}\bigl(2^{-j}t\bigr)\, \Bigr| \ \ge
\ C_0\, , \quad t \in [0, \sigma]\, .
\end{equation}
Next we choose the number~$d$. To this end we consider the binary
tree defined as follows: the number $1/2$ is at the root, the
numbers $1/4$ and $3/4$ are its children, and so on. Every
vertex~$\alpha$ has two children $\alpha/2$ and $(\alpha +1)/2$.
For convenience we shall identify a vertex and the corresponding
number. Thus, all vertices of the tree are dyadic points from the
interval~$(0,1)$. Indeed, the $n-$th level of the tree (i.e., the
set of vertices with the distance to the root equal to~$n$)
consists
of points~$2^{-n-1}j$, where $j$ is an odd number  from~$1$ to $2^{n+1}-1$.\\
The trigonometric polynomial~$p$ is $1-$periodic and, thus,  has
at most $N$ zeros in $[0, 1)$, and hence, on the tree. Therefore,
there is a number~$q$ such that all roots of~$p$ on the tree are
contained on levels $j \le q$. Since the polynomial~$p$ has no
symmetric roots, at least one of the two children of any vertex of
the tree   is not a root of~$p$. Whence, there is a path of
length~$q$ along the tree starting at the root (all paths are
without backtracking) that does not contain any root of~$p$. Let
$2^{-q-1}d$ be the final vertex of that path, $d$ is an odd
number,
 $1\le d \le 2^{q+1}-1$. Denote as usual by~$\{x\}$ the fractional part of~$x$. Then
the sequence~$\{2^{-1}d\}, \ldots , \{2^{-q-1}d\}$ does not
contain roots of~$p$. The sequence $\{2^{-q-2}d\}, \{2^{-q-3}d\},
\ldots $ does not contain them either, because there are no roots
of~$p$ on levels bigger than~$q$. Let~$n$ be the smallest natural
number such that $2^{-q-n-1}d < \sigma /2$. We have
$p(2^{-1}d)\cdots p(2^{-q-n-1}d) \ne 0$. Since~$p_k \to p$ as $k \to
\infty$, and all $p_k$ are equi-continuous on~$\mathbb{R}$, it
follows that there is a constant $C_1>0$ such that
\begin{equation}\label{d}
\Bigl|\, \prod_{j=1}^{q+n} p_{k+j}\bigl(2^{-j-1}d +
2^{-k-j}x\bigr)\, \Bigr| \ \ge \ C_1\, , \quad x \in [0,
\sigma]\,,
\end{equation}
for  sufficiently large~$k$. Now we are ready to estimate the
value~$f(2^{k-1}d + t)$. We have
\begin{eqnarray*}
&&\Bigl|\, f\bigl(2^{k-1}d + t\bigr)\, \Bigr| \ = \ \Bigl|\,
\prod_{j=1}^{k-1} p_{j}\bigl(2^{k-1-j}d + 2^{-j}t\bigr)\, \Bigr|
\times \, \Bigl| p_k \bigl( 2^{-1}d + 2^{-k}t \bigr)\, \Bigr|
\times \\
&&\times \Bigl|\, \prod_{j=1}^{q+n} p_{k+j}\bigl(2^{-j-1}d +
2^{-k-j}t\bigr)\Bigr| \, \times \, \Bigl|\, \prod_{j=1}^{\infty}
p_{k+q+n+j}\bigl(2^{-j}(2^{-q-n-1}d + 2^{-k-q-n}t)\bigr)\Bigr|\, .
\end{eqnarray*}
To estimate the first factor in this product, we note that $2^{k-1-j}d \in
\mathbb{Z}$, whenever~$j \le k-1$, and hence
$p_{j}\bigl(2^{k-1-j}d + 2^{-j}t\bigr) = p_j(2^{-j}t)$. Thus, the
first factor is $\bigl|\, \prod_{j=1}^{k-1}
p_{j}(2^{-j}t)\bigr|$, which is,
by~(\ref{sigma}), bigger than or equal to~$C_0$, for every~$t \in [0, \sigma]$.\\
The third factor $\bigl|\, \prod_{j=1}^{q+n} p_{k+j}(2^{-j-1}d +
2^{-k-j}t)\bigr|$, by~(\ref{d}), is at least~$C_1$. Finally, the
last factor is bigger than or equal to~$C_0$. To see this it
suffices to use~(\ref{sigma}) for~$R = \infty, r = k+q+n, x =
2^{-q-n}d + 2^{-k-q-n}t$ and note that~$x < \sigma$ by the choice
of~$n$. Thus,
  $$
|f(2^{k-1}d + t)| \ \ge \ C_0^2C_1 \bigl| p_k \bigl( 2^{-1}d +
2^{-k}t \bigr)\bigr|\, .
$$
On the other hand, by assumption, $f(2^{k-1}d + t) = o(2^{-\ell
k})$ as $k \to \infty$, consequently
 $ p_k ( 2^{-1}d + 2^{-k}t) = o(2^{-\ell k})$. The number $d$ is odd, hence, by periodicity,~$p_k ( 2^{-1}d + 2^{-k}t) =
 p_k ( 1/2 + 2^{-k}t)$. Thus, we arrive at the following asymptotic relation: for every $t \in [0, \sigma]$ we have
 \begin{equation}\label{1/2}
 p_k \bigl( 1/2 + 2^{-k}t\bigr) \ = \ o\, \bigl( 2^{-\ell k}\bigr)\quad \mbox{as}\ k \to \infty\, .
 \end{equation}
 This already implies that $D^j p_k(1/2) = o(2^{(j-\ell)k})$ as $k \to \infty$, for every~$j = 0, \ldots , \ell$.
 Indeed, consider the Tailor expansion of the function $h(t) = p_k \bigl( 1/2 + 2^{-k}t\bigr)$
  at the point~$0$ with the remainder in Lagrange form:
 $$
 h(t) \ = \ \sum_{j=0}^{\ell}\frac{D^j h(0)}{j!}\, t^j\ + \ \frac{D^{\ell+1} h(\theta)}{(\ell+1)!}\, t^{\, \ell+1}\, , \ t \in [0, \sigma]\, ,
 $$
where $\theta = \theta(t) \in [0, t]$. Substituting~$D^j h(0) =
2^{-jk} D^j p_k(1/2)$, we get
$$
 p_k(1/2 + 2^{-k}t) \ = \ \sum_{j=0}^{\ell}\frac{D^j p_k(1/2)}{j!}\, 2^{-jk}\, t^j\ + \ \frac{D^{\ell+1}p_k(1/2 + 2^{-k}\theta)}{(\ell+1)!}\, 2^{-(\ell+1)k}\, t^{\, \ell+1}\, , \ t \in [0, \sigma]\, .
$$
First, we estimate the remainder. Since the sequence of
trigonometric polynomials $\{p_k, \ k \ge 1\}$ is
bounded, the norms $\|D^{\ell+1} p_k \|_{C[0, \sigma]}$ do not
exceed some constant~$C_2$. Therefore,
$$
\left|\frac{D^{\ell+1} p_k(1/2 + 2^{-k}\theta)}{(\ell+1)!}\,
2^{-(\ell+1)k}\, t^{\, \ell+1}\right| \ \le \
\frac{C_2}{(\ell+1)!}2^{-(\ell+1)k}\, \sigma^{\, \ell+1} \ = \
o(2^{-\ell k})\quad  \mbox{as} \ k \to \infty\, .
$$
 Combining this with~(\ref{1/2}), we get
\begin{equation}\label{polynom}
\left\|\, \sum_{j=0}^{\ell}\frac{D^j p_k(1/2)}{j!}\, 2^{-jk}\,
t^j\, \right\|_{\, C([0, \sigma])}\ = \ o(2^{-\ell k})\quad
\mbox{as} \ k \to \infty\, .
\end{equation}
Since, in a finite-dimensional space, all norms are equivalent,
the norm of an algebraic polynomial of degree~$\ell$ in the space
$C([0, \sigma])$ is equivalent to its largest coefficient.
Whence,~(\ref{polynom}) implies that
\begin{equation} \label{eq:aux}
\max\limits_{j=0, \ldots ,
\ell}  2^{-jk} \frac{|D^j p_k(1/2)|}{j!} = o(2^{-\ell k}), \quad k
\to \infty.
\end{equation}
\end{proof}

\smallskip
\noindent We are finally ready to prove the  main result of this
section, Theorem \ref{th:neccesary_conditions}.


\medskip \begin{proof}[Proof of Theorem \ref{th:neccesary_conditions}]  Let $p_k(\omega):= a_*^{(k)}(e^{-2\pi i \omega})$, $\omega \in \RR$, be the symbol of the $k-$th mask
in the trigonometric form. If the non-stationary scheme converges
to a continuous compactly supported refinable function~$\phi$,
then its Fourier transform $\widehat \phi (\omega) =
\int_{\mathbb{R}}\phi (x)e^{-2\pi i x \omega}d x$ is given by
\begin{equation}\label{prod2}
\widehat \phi (\omega) =  \prod_{k=1}^{\infty} p_k(2^{-k}\omega),
\quad \omega \in \RR\,.
\end{equation}
If $\phi \in C^\ell(\mathbb{R})$, then $\widehat \phi (\omega) =
o(\omega^{-\ell})$ as $\omega \to \infty$. Since the refinable
function of the limit mask~$\mathbf{a}$ is stable, it follows that
its symbol~$a_*(z)$ has no symmetric roots on the unit circle. The
claim follows by  Proposition~\ref{p30}. Indeed, by definition of
$p_k$, we get by \eqref{eq:aux}
$$
 \max\limits_{j=0, \ldots , \ell } 2^{-jk} |D^j a_*^{(k)}(-1)| = o(2^{-\ell k}) \quad \hbox{as}  \quad k \to \infty,
$$
which completes the proof.
\end{proof}


\end{document}